\documentclass[11pt,a4paper,reqno]{amsart}
\usepackage{amsmath,amssymb,amsfonts,epsfig,mathrsfs,cite, hyperref}
\usepackage[T1]{fontenc}
\usepackage{color}
\usepackage{array}
\usepackage{amsthm}
\usepackage{amstext}
\usepackage{graphicx}
\usepackage{setspace}

\usepackage[title]{appendix}

\usepackage{booktabs}
\usepackage{longtable}
\usepackage{tcolorbox}

\usepackage{float}

\makeatletter
\@namedef{subjclassname@2020}{%
  \textup{2020} Mathematics Subject Classification}
\makeatother

\usepackage[margin=2.5cm]{geometry}
\usepackage{color}
\usepackage{enumitem}
\usepackage{amscd,psfrag}
\usepackage{yhmath}
\usepackage[mathscr]{eucal}
\usepackage{pdflscape}

\usepackage{comment}

\allowdisplaybreaks[4]

\usepackage{slashed}

\makeatletter
\pdfpageheight\paperheight
\pdfpagewidth\paperwidth

\usepackage{epstopdf}
\usepackage{indentfirst}	

\usepackage[normalem]{ulem}
\theoremstyle{plain}
\newtheorem{definition}{Definition}
\newtheorem{theorem}[definition]{Theorem}
\newtheorem*{theorem*}{Theorem}

\newtheorem{remark}[definition]{Remark}

\newtheorem*{remark*}{Remark}
\newtheorem*{sideremark*}{Side Remark}

\newtheorem*{claim*}{Claim}
\newtheorem*{lemma*}{Lemma}
\newtheorem*{q*}{Question}
\newtheorem{lemma}[definition]{Lemma}

\newtheorem*{corollary*}{Corollary}

\newtheorem{proposition}[definition]{Proposition}

\newcommand{\R}{\mathbb{R}}

\newcommand{\na}{\nabla}

\newcommand{\p}{\partial}
\newcommand{\loc}{{\rm loc}}

\newcommand{\e}{\epsilon}

\newcommand{\dd}{{\rm d}}

\newcommand{\G}{\Gamma}
\newcommand{\linf}{{L^\infty}}

\newcommand{\two}{{\rm II}}

\newcommand{\A}{{\mathcal{A}}}

\newcommand{\I}{{\mathcal{I}}}

\def\XXint#1#2#3{{\setbox0=\hbox{$#1{#2#3}{\int}$ }
\vcenter{\hbox{$#2#3$ }}\kern-.6\wd0}}

\newcommand{\gauss}{{\kappa_{\Sigma}}}

\newcommand{\tg}{{\widetilde{\G}}}

\newcommand{\llinf}{L^\infty_t L^\infty_x}

\title{Existence of semiglobal $W^{2,p}$-isometric immersions for negatively curved surface metrics with unbounded second fundamental form}

\author{Siran Li}

\address{Siran Li: School of Mathematical Sciences $\&$ CMA-Shanghai, Shanghai Jiao Tong University, No.~6 Science Buildings,
800 Dongchuan Road, Minhang District, Shanghai, China (200240)}

\email{\texttt{siran.li@sjtu.edu.cn}}

\keywords{Isometric immersion; Gauss-Codazzi equations; negative curvature; second fundamental form; compensated compactness}

\subjclass[2020]{Primary: 53C42, 35L65, 35D30; Secondary: 35L60, 35Q31, 76N10}
\date{\today}

\begin{document}

\begin{abstract}
This paper is concerned with the existence theory of isometric immersions of surfaces with negative Gaussian curvature into the three-dimensional Euclidean space. We reformulate the Gauss--Codazzi equations, \emph{i.e.}, the partial differential equations for isometric immersions, into hyperbolic conservation laws for the flows of Chaplygin gas with nonzero source terms. Then, employing theories of invariant regions and compensated compactness,  we establish for any given $p \in [2,\infty[$ the existence of $W^{2,p}_\loc$-isometric immersions of various general families of metrics over infinite strips $\R\times [0,T]$ with arbitrarily large $T$. Such metrics include those of various classical minimal surfaces: helicoid, catenoid, and Enneper surfaces, as well as metrics in isothermal coordinates or of the ``reciprocal-type''. In our fluid dynamical formulation of the isometric immersion problem, we specialise in the case that the two Riemann invariants for the associated hyperbolic conservation law remain bounded and of distinctive signs, and obtain $L^p$-solutions to the initial-boundary value problem via the method of relative entropy with respect to a nonstationary ODE background. The 
isometric immersions constructed in this paper have second fundamental forms belonging to $L^p_\loc \setminus L^\infty_\loc$. 

\end{abstract}
\maketitle

%\tableofcontents

\section{Introduction}

\subsection{The problem}
The existence of isometric immersions of surfaces has been a long-standing problem in differential geometry, nonlinear Partial Differential Equations (PDE), and global analysis. The question, in its simplest form, reads as follows:
\begin{quotation}
    Given a positive definite $2 \times 2$-matrix field $g = \{g_{ij}\}$ on a 2-dimensional surface $\Sigma$, does there exist an immersion $\iota: \Omega \to \R^3$ such that $g = \iota^\#\delta$?
\end{quotation}
Throughout this paper, $\iota^\#$ denotes the pullback under $\iota$, and $\delta$ is the Euclidean metric on $\R^3$. We refer the reader to Janet~\cite{janet1926}, E. Cartan~\cite{cartan1927}, Aleksandrov~\cite{aleksandrov1948}, Nirenberg~\cite{nirenberg1953}, Nash~\cite{nash1954, nash1956}, Kuiper~\cite{kuiper1959},  Pogorelov~\cite{pogorelov1973}, Bryant--Griffiths--Yang~\cite{bgy83}, as well as the monograph~\cite{hanhong2006} by Han--Hong for cornerstone results on this question, among many other references.

An important line of research on the isometric immersions problem is via PDE analysis. The Gauss--Codazzi equations are the compatibility equations of curvatures for the existence of an isometric immersion $\iota: (\Sigma,g) \to (\R^3,\delta)$. The \emph{intrinsic geometry} of $\iota$ is determined solely by the metric $g$, which serve as the known coefficients of the Gauss--Codazzi equations, while the \emph{extrinsic geometry} is given by the second fundamental form $\two$, which serve as the unknowns of the PDE. Suppose that an isometric immersion $\iota$ exists; then, the flat Riemannian curvature on $(\R^3,\delta)$ projects either on two tangential directions of $\iota$, or on one tangential and one normal directions of $\iota$. Such projections give rise to one \emph{Gauss equation} and two \emph{Codazzi(--Mainardi) equations}, respectively:
\begin{equation}\label{eq:Gauss-Codazzi}
\begin{cases}
    LN-M^2 = \gauss,\\
    \p_x M - \p_y L = \G^2_{22} L - 2\G^2_{12} M + \G^2_{11}N,\\
    \p_xN-\p_yM = -\G^1_{22} L + 2\G^1_{12} M - \G^1_{11}N.
\end{cases}
\end{equation}
As is customary, we write $g = \begin{bmatrix}
    E&F\\
    F&G
\end{bmatrix}$ and  $\two=\begin{bmatrix}
    L&M\\
    M&N
\end{bmatrix}$. The Christoffel symbols $\left\{\G^i_{jk}\right\}_{i,j,k \in \{1,2\}}$ and Gaussian curvature $\gauss$ are determined by $g$. 

The Gauss--Codazzi system~\eqref{eq:Gauss-Codazzi} is also a sufficient condition for the \emph{local} existence of isometric immersions under mild regularity assumptions. This is known as the ``fundamental theorem of surface theory'', pioneered by Bonnet (1860) \cite{bonnet}. We refer to \S\ref{sec: fund theorem} for detailed discussions.

The sign of \(\gauss\), the Gaussian curvature of $g$, plays a decisive role in the study of the isometric immersion problem. The Gauss--Codazzi system~\eqref{eq:Gauss-Codazzi} is \emph{elliptic} where $\gauss>0$ and \emph{hyperbolic} where $\gauss<0$. There has been vast literature for the analysis of purely elliptic Gauss--Codazzi equations, \emph{i.e.}, when $\gauss>0$ everywhere on $\Sigma$. In this case, $\Sigma$ is homeomorphically the two-sphere ${\bf S}^2$ by the Gauss--Bonnet theorem, and the question of the existence of isometric immersions is known as the ``Weyl problem''. Nirenberg \cite{nirenberg1953} answered the Weyl problem in the affirmative for $C^4$-metrics; see also \cite{me-BLMS, guan1994, guanlu2017, pogorelov1973, iaia1985, buragoshefel, aleksandrov1948} for subsequent developments (including results concerning the degenerate-elliptic case $\gauss \geq 0$), among many other papers. 

In contrast, when $\gauss$ is strictly negative, the (global) existence of isometric immersions is known to have obstructions. A well-known theorem by Hilbert--Cohn-Vossen \cite{hilbert1999} states that the pseudosphere, \emph{i.e.}, the space form of constant Gaussian curvature \(-1\),  cannot be $C^2$-isometrically embedded into $(\mathbb{R}^{3},\delta)$. Efimov~\cite{efimov1963a, efimov1963b} extended this result by showing that complete negatively curved surfaces satisfying $\sup |\gauss|+ \sup \big|\nabla |\gauss|^{-1/2}\big| < \infty$ have no $C^3$-isometric immersion into $\R^3$; a particular case is when \(\gauss = -r^{-2}\) for in some geodesic polar coordinates \((\theta, r)\) as $r \to \infty$. Further work in this direction can be found in Poznyak~\cite{poznyak1973}, Rozendorn~\cite{rozendorn1992}, Tunitski~\cite{tunitski1993}, and Poznyak--Skikin~\cite{poznyakshikin1996}, etc. These developments led S.-T. Yau~\cite{yau2000} to propose the following problem: \emph{Find a sufficient condition for a complete negative curved surface to be isometrically embedded in \(\mathbb{R}^3\)}. Yau also suggested that reasonable conditions should involve decay rates of the Gaussian curvature at infinity. In this direction, Hong~\cite{hong1993} established the existence of global isometric immersions of surfaces such that $\partial_{r} \log (|K| r^{2+}) \leq 0$ for large $r$, and $\p_\theta\log|\gauss|$, $\p_{\theta\theta}\log|\gauss|$, and $r\p_\theta\p_r\log|\gauss|$ are bounded. Recently, in the nice work~\cite{chhw} by Cao--Han--Huang--Wang, Hong's pointwise conditions are relaxed to certain integral conditions, which allow $\gauss$ to exhibit non-monotone, even oscillatory behaviours as $r \to \infty$.

In passing, we comment that the existence theorems on solutions to the Gauss--Codazzi equations for metrics of sign-changing curvature are largely elusive in the literature. In this case, the Gauss--Codazzi equations are of mixed types, and the only available results are local in nature (Lin~\cite{lin1986}, Han~\cite{han2005}, and Chen--Clelland--Slemrod--Wang--Yang~\cite{chen2018}). Also, until recently, the existence theory for degenerate-hyperbolic Gauss--Codazzi equations has also been largely unknown. In this direction, Hu--Guo--Qin~\cite{hu1} and Hu--Guo--Li--Qin~\cite{hu2} obtained local existence theorems near the degenerate curve of $\gauss$ by analysing the fluid dynamical formulations.

\subsection{Hyperbolic PDE approach to isometric immersions}
\label{subsec: Hyperbolic PDE approach to isometric immersion}

In recent fifteen years or so, rapid developments have been witnessed in the research on the isometric immersion problem for negatively curved surfaces, by exploiting connections between the Gauss--Codazzi equations and various PDE models in compressible fluid dynamics. Chen--Slemrod--Wang \cite{chen2010a} first observed that Codazzi equations are essentially a system of hyperbolic balance laws for \(\{L, M\}\), once we substitute \(N=(\gauss + M^2)/L\) via the Gauss equation. In this formulation, if one suitably interprets $(L,M,N)$ in terms of fluid dynamical quantities, the cases  \(\gauss > 0\), \(\gauss = 0\), and \(\gauss < 0\) correspond respectively to the subsonic, sonic, and supersonic regimes for compressible flows.

In the supersonic case \(\gauss < 0\), the balance laws obtained by transformations applied to the Gauss--Codazzi system~\eqref{eq:Gauss-Codazzi} are strictly hyperbolic. By way of applying the methods of vanishing artificial viscosity and  invariant regions, thoroughly exploited in the works of Morawetz~\cite{morawetz1985, morawetz1995, morawetz2004} and DiPerna~\cite{diperna1983, diperna1985} on transonic flows, Chen--Slemrod--Wang \cite{chen2010a} established the  existence of global \(C^{1,1}\)-isometric immersions of a one-parameter family \(\{g^{(\beta)}\}_{\beta>1}\) of negatively curved metrics, where \(g^{(\sqrt{2})}\) corresponds to the metric of the standard catenoid. Later, using another closely related balance laws, Cao--Huang--Wang \cite{cao2015} proved the existence of global \(C^{1,1}\)-isometric immersions of another distinctive family of metrics with negative $\gauss$, which includes the standard helicoid as a special case.  Moreover, Cao--Huang--Wang \cite{cao2016} established the global existence of \(C^{1,1}\)-isometric immersions of metrics of the form \(g = E(y)\,dx^2 + dy^2\) via the Lax--Friedrich scheme, under the assumptions that \(\log(E(y)^2\sqrt{-\gauss})\) is a non-increasing \(C^{1,1}\)-function and \(E''(y) = -\gauss E(y)\). Christoforou~\cite{christoforou2012} established the existence of \(C^{1,1}\)-isometric immersions of conformal metrics \(g_{c,q^*} = (\cosh(cx))^{2/(q^{*2}-1)}(dx^2 + dy^2)\) with certain conditions on \(q^*\), where \(c > 0, q^* > 1\) are constants, by exploiting the theory of global BV-solutions to hyperbolic conservation laws (\textit{cf.} Lax~\cite{lax1954}, Dafermos~\cite{dafermos2016}, and Liu~\cite{liu2021}).\footnote{For the discussions in this paragraph on \cite{christoforou2012, cao2015, cao2016, chen2010a}, by global isometric immersions we mean those of an infinite strip $\R \times [a,b]$ into $\R^3$.} See also \cite{acharya2017, li, chen2010b, christoforou2023, li-survey, ls2023} for further developments and discussions on the fluid dynamical formulations of the Gauss--Codazzi equations. 

%To summarise, in the recent works \cite{cao2015, cao2016, chen2010a, chen2010c, christoforou2012}, various negatively curved metrics of the ``generalised catenoid type'' or the ``generalised helicoid type'' have been proved to admit global \(C^{1,1}\)-isometric immersions into \(\mathbb{R}^{3}\).

Our current paper aims to further studies on the isometric immersion problem for negatively curved surfaces through the analysis of the Gauss--Codazzi system~\eqref{eq:Gauss-Codazzi} in the hyperbolic regime. As is characteristic to hyperbolic PDE, even if one starts with smooth initial data, \emph{singularities may develop in finite time} (\emph{e.g.}, shock waves and vortex sheets). It is thus natural to investigate weak solutions in function spaces consisting of discontinuous functions. With this in mind, we identify $y$ with the time variable $t$ and look for $L^p$-solutions to~\eqref{eq:Gauss-Codazzi}, which in turn leads to $W^{2,p}$-isometric immersions via the fundamental theorem of surface theory with low regularity.

Apart from their intrinsic mathematical interest in PDE, analysis, and geometry, the existence, rigidity, and stability properties of Sobolev $W^{2,p}$- or fractional Sobolev $W^{s,p}$-isometric immersions are also of significance in the theory of nonlinear elasticity. We refer the reader to~\cite{ciarlet1988, ciarletgratie2008, ciarletlarsonneur2002, ciarletmardare2019,hornung2013, lewicka2011, lewicka2017, lewicka2020,kupferman2019,lps2024,alpern2022, alpern2024}, among many other references.

%Let us also remark that in our regularity class ($W^{2,p}$-isometric immersions for $p \in [2,\infty[$), isometric immersions are \emph{rigid}, in contrast to the ``wild'' or flexible \(C^{1,\alpha}\)-isometric immersions (with \(0 < \alpha \le \alpha_* < 1\) for some \(\alpha_*\)) constructed by convex integration~\cite{delellis2012}. 

%\blue{The main contribution of this paper is... ---[REWRITE]} to establish the global or ``nearly global'' existence of \(C^{1,1}\)—indeed, \(W^{2,\infty}\)—isometric immersions for several more families of such metrics. This is done by further exploiting the Gauss-Codazzi equations (1.4), (1.3) via the theory of hyperbolic balance laws, and by further exploring the structures of relevant geometric quantities, e.g., the Gauss curvature, the metric components, and the Christoffel symbols. In addition, more than simply producing isometric immersions, our approach in fact resolves an initial value problem of (a transformed version of) the Gauss-Codazzi equations.

\subsection{Fluid dynamical formulation for Gauss--Codazzi equations}\label{subsec: Chaplygin gas}

In line with Chen--Slemrod--Wang~\cite{chen2010a}, we further investigate the existence theory of isometric immersions of surface metrics with negative Gaussian curvature by exploring the \emph{fluid dynamical formulation} of the Gauss--Codazzi system~\eqref{eq:Gauss-Codazzi}. A key novelty is that we consider solutions to the Gauss--Codazzi equations that are unbounded (in the ``initial layer''): our analysis encompasses the case that $\two=\begin{bmatrix}
    L&M\\
    M&N
\end{bmatrix}$ lies in $L^p_\loc(\R \times [0,T])$ for $2\leq p<\infty$ but not in $L^\infty_\loc(\R\times [0,\delta_0])$ for any $\delta_0>0$. 

As in \cite{chen2010a, cao2015, cao2016, li}, the starting point of our analysis is the introduction of the fluid variables, ``density'' and ``momentum'':
\begin{equation}\label{rho, m, def}
    \rho := \frac{L}{\gamma},\qquad
     m = -\frac{M}{\gamma}.
\end{equation}
Then, by expressing $N$ via the Gauss equation $LN-M^2 = \gauss$, one arrives at the system of balance laws:
\begin{equation}\label{rho, m equation}
    \begin{cases}
        \p_t \rho + \p_x m = -\tg^{2}_{22}\rho - 2\tg^{2}_{12}m - \tg^2_{11} \left(\frac{m^2-1}{\rho}\right),\\
        \p_t m + \p_x\left(\frac{m^2-1}{\rho}\right) = -\tg^1_{22}\rho -2\tg^1_{12}m - \tg^1_{11}\left(\frac{m^2-1}{\rho}\right).
    \end{cases}
\end{equation}
Here and hereafter, we shall designate
\begin{equation}\label{gamma}
\gamma := \sqrt{-\gauss},    
\end{equation}
which is well-defined for $\gauss<0$, and introduce the modified Christoffel symbols:
\begin{equation}\label{modified Gamma}
    \begin{cases}
\tg^1_{22} := \G^1_{22},\qquad\tg^1_{12} := \G^1_{12} + \frac{\p_y\gamma}{2\gamma},\qquad\tg^1_{11} := \G^1_{11} + \frac{\p_x \gamma}{\gamma},\\
\tg^2_{22} := \G^2_{22} + \frac{\p_y\gamma}{\gamma},\qquad\tg^2_{12} := \G^2_{12} + \frac{\p_x\gamma}{2\gamma},\qquad\tg^2_{11} := \G^2_{11}.
    \end{cases}
\end{equation}
Let us also denote
\begin{align*}
 n :=   \frac{m^2-1}{\rho} = \frac{N}{\gamma}.
\end{align*}

Throughout this paper, we seek solutions such that 
\begin{equation}\label{rho geq 0}
  \rho \geq 0, 
\end{equation}
or equivalently $L \geq 0$. The case $L \leq 0$ shall not be considered separately in this paper, since the Gauss--Codazzi system~\eqref{eq:Gauss-Codazzi} is invariant under $(x,y; L,M,N) \mapsto (x,y; -L,-M,-N)$. Here, let us also point out a flawed statement in \cite[the end of Step~1 on p.426]{li}:\footnote{The author thanks Prof.~Raz Kupferman for pointing this out in personal communication.} the author claimed that ``As $g$ is symmetric and positive definite, [$\rho =0 \Leftrightarrow L=0$] is automatically satisfied''. Indeed, $(L,M,N)=(0,\gamma,0)$ is a solution to~\eqref{eq:Gauss-Codazzi} if $\na \log \gamma=-2\left[\G^2_{12}, \G^1_{12}\right]^\top$, which equals $-\left[\p_y\log G, \p_x\log E\right]^\top$ when $F \equiv 0$. In view of Brioschi's formula~\eqref{Brioschi}, this condition is satisfied for metrics $g$ in which $F\equiv 0$ and $\big(E=E(x,y), G=G(x,y)\big)$ satisfies the third-order PDE:
\begin{equation*}
    \begin{cases}
        \p_x \log \left\{\frac{1}{\sqrt{EG}}
\left[
\frac{\partial}{\partial x}\left(\frac{\p_x G}{\sqrt{EG}}\right)
+
\frac{\partial}{\partial y}\left(\frac{\p_y E}{\sqrt{EG}}\right)
\right]\right\} = -4 \p_y\log G,\\
\p_y\log \left\{\frac{1}{\sqrt{EG}}
\left[
\frac{\partial}{\partial x}\left(\frac{\p_x G}{\sqrt{EG}}\right)
+
\frac{\partial}{\partial y}\left(\frac{\p_y E}{\sqrt{EG}}\right)
\right]\right\} = 4\p_x\log E.
    \end{cases}
\end{equation*}

On the other hand, one crucial novelty of this work is that we do \emph{not} exclude concentration formation; that is, we allow degeneracy $\rho = +\infty$, or equivalently $L = +\infty$ at some point $P$. For reasons that will soon become transparent, we also impose $|M| \leq \gamma$ everywhere. Thus, $L=+\infty$ forces $N=0$ at $P$. But the Gauss--Codazzi Eq.~\eqref{eq:Gauss-Codazzi} is invariant under $(x,y;L,M,N) \mapsto (x,y;-L,-M,-N)$ and $(x,y;L,M,N) \mapsto (y,x;N,M,L)$, so one does not have to consider separately the case $\rho = 0$, \emph{i.e.}, the formation of vacuum.

For subsequent analysis we shall focus on~\eqref{rho, m equation}, a 1D isentropic Euler equation with nonzero source terms. Once we write $$m=\rho u,$$ the homogeneous equation associated with~\eqref{rho, m equation} becomes
\begin{equation}\label{homogeneous eq, chaplygin}
\begin{cases}
\p_t\rho + \p_x(\rho u) = 0,\\
\p_t(\rho u) + \p_x \left(\rho u^2 + P\right) = 0,
    \end{cases}
\end{equation}
with 
\begin{align*}
    P = P(\rho) = -\frac{1}{\rho}.
\end{align*}
This is the pressure law for the Chaplygin gas, as obtained in Chen--Slemrod--Wang~\cite{chen2010a}.

%In an earlier nice work, Cao--Huang--Wang \cite[Theorem~5.1]{cao2016} proved the global existence of $\linf$-solutions to Eq.~\eqref{eq:Gauss-Codazzi} for  the negatively curved metric of the special form $g = E(y) dx^2 + dy^2$ with $\p_{yy}E = -\gauss E$, $E|_{\{y=0\}}=1$, and $\p_yE|_{\{y=0\}}=0$, subject to certain  conditions on $\gamma=\gamma(y)$, by way of  applying to Eq.~\eqref{rho, m equation} the Lax--Friedrichs finite-difference scheme with  fractional step. Such $g$ is said to be a helicoid-type metric, as $E(y)=y^2+c^2$ for some $c>0$ yields the standard parametrisation for the helicoid. In contrast, we shall establish in this paper the  existence global or ``semiglobal'' $L^p$-solutions to Eq.~\eqref{eq:Gauss-Codazzi} for more general families of metrics. In passing, we note that Eq.~\eqref{rho, m equation} echoes the fluid formulation in \cite{chen2010a}, which recasts the Gauss--Codazzi equations into a 2D inhomogeneous compressible Euler system for Chaplygin gas. 

Regarding the conservation law~\eqref{homogeneous eq, chaplygin} for the 1D Chaplygin gas, direct computation shows that two families of \emph{contact discontinuities} propagate at speeds
\begin{equation}\label{eigenvalues}
    \lambda_\pm = \frac{m \pm 1}{\rho}.
\end{equation}
In particular, there is no onset of shock waves. Hence, it is natural to expect that global weak solutions to the homogeneous Eq.~\eqref{homogeneous eq, chaplygin} may exist if, for some finite number $C_0$, 
\begin{equation}\label{lambda pm condition}
-C_0  \leq \lambda_- \leq 0 \leq \lambda_+ \leq C_0  
\end{equation} 
holds for all time. In particular, the sign conditions $\lambda_-\leq 0 \leq \lambda_+$ imply that $$-1 \leq m \leq 1.$$ By definition of the fluid variables $(\rho, u)$ in~\eqref{rho, m, def} and the Gauss equation $LN-M^2 = \gauss$, we have $LN \leq 0$ and hence $$N \leq 0.$$ Also note that by~\eqref{eigenvalues} and \eqref{lambda pm condition} we have $\frac{1\pm m}{\rho} \leq C_0$, thus $$\rho \geq \frac{1}{C_0}>0.$$ In particular, there is no vacuum formation.

The case $\rho=\infty$ suggests that the Gauss--Codazzi system~\eqref{eq:Gauss-Codazzi} may admit non-$\linf$-weak solutions. Such solutions evade the earlier constructions in \cite{cao2015, cao2016, chen2010a, chen2010c, li}. A key novel feature of our current paper is to establish an $L^p$-solution theory $(p<\infty)$ via \emph{entropy analysis} based on the choice of singular entropies for the balance law~\eqref{rho, m equation} of the Chaplygin gas.

\subsection{Analytic goal} To sum up the preceding discussions, our aim from the PDE perspectives is as follows:
\begin{tcolorbox}[
  boxrule=0.5pt,
  colback=white,
  colframe=blue,
  left=1em,
  right=1em,
]
\begin{quotation}
Solving the Gauss--Codazzi system~\eqref{eq:Gauss-Codazzi} subject to constraints:
\begin{equation}\label{L,M,N sign constraints}
    \begin{cases}
    L \geq 0, \quad N \leq 0, \quad\text{and } -\gamma \leq M \leq \gamma \quad\text{a.e.},\\
    \frac{M \pm \gamma}{L} \in \linf,
    \end{cases}
\end{equation}
while allowing $L=\infty$, $N=0$, and $M = \pm\gamma$ somewhere in the domain.
\end{quotation}
\end{tcolorbox}

This goal shall be achieved through a combination of the method of invariant regions (\emph{i.e.}, a maximum principle for parabolic systems of PDE; see Chueh--Conley--Smoller~\cite{chueh1977} and Smoller~\cite{smoller1994}), the theory of compensated compactness (for passing to the vanishing viscosity limits; see Dafermos~\cite{dafermos2016} and Evans~\cite{evans1990}), and some delicate transforms among the fluid variables $(\rho, m)$, the Riemann invariants $(w=\lambda_+,z=\lambda_-)$, as well as the geometric variables $(L, M,N)$.

\subsection{Main results}

Recall from \S\ref{subsec: Hyperbolic PDE approach to isometric immersion} that the existence of global $W^{2,\infty}$-isometric immersions of three families of negatively curved metrics has been established, as summarised in Table~\ref{table: 1}:

\begin{table}[H]
\centering
\caption{Families of negatively curved metrics admitting $C^{1,1}$-isometric immersions}
\begin{tabular}{
    >{\centering\arraybackslash}p{5cm}
    >{\centering\arraybackslash}p{2.8cm}
    >{\centering\arraybackslash}p{3.2cm}
    >{\centering\arraybackslash}p{4.2cm}
}
\toprule
\textbf{Metric} & \textbf{Shape} & \textbf{Domain} & \textbf{References} \\
\midrule
\begin{small}$g = (\cosh(cx))^{\frac{2}{(q^{*2}-1)}}(dx^2 + dy^2)$ with $c>0, q^*>1$
\end{small}
& generalised catenoids 
& $[-x_0, x_0] \times \mathbb{R}$ 
& Chen--Slemrod--Wang \cite{chen2010a}; Christoforou \cite{christoforou2012} \\
\midrule
\begin{small}$g = E(y)\,dx^2 + dy^2$ with $E'' = -\gauss E$ \end{small}
& generalised helicoids 
& $\mathbb{R} \times [0, y_0]$ 
& Cao--Huang--Wang \cite{cao2015, cao2016}; Li \cite{li} \\
\midrule
\begin{small}
$g = (1+x^2+y^2)^{\alpha}\,(dx^2+dy^2)$ with $\alpha \in [1,10]$ 
\end{small}
& generalised Enneper  
& $\mathbb{R}^2 \setminus \{x = \pm y\}$ 
& Li \cite{li} \\
\bottomrule
\end{tabular}\label{table: 1}
\end{table}

In this paper, we substantially augment Table~\ref{table: 1} by showing that fifteen families of metrics with negative Gaussian curvature admit global $W^{2,p}$-isometric immersions   for any given $2 \leq p < \infty$. We say that an isometric immersion is \emph{semiglobal} if it is defined over arbitrarily large strips; \emph{i.e.},  $\R\times[0,T]$ with arbitrarily large (but finite)  $T>0$.

Our main result, in a simplified formulation, is as follows:

\begin{theorem}\label{thm: main}
For any $p \in [2,\infty[$, the fifteen negatively curved metrics families in Table~\ref{table: 2} admit semiglobal isometric immersions in $(\R^3, \delta)$ of regularity $W^{2,p}_\loc$. Moreover, we can require their second fundamental forms to lie in $L^p_\loc \setminus L^\infty_\loc$. 
\end{theorem}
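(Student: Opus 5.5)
The plan is to reduce Theorem~\ref{thm: main} to an existence theorem for $L^p$ entropy solutions of the Chaplygin-gas balance law~\eqref{rho, m equation} on $\R\times[0,T]$. I will then recover the immersion from a low-regularity fundamental theorem of surface theory (\S\ref{sec: fund theorem}). The argument has three layers.

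\emph{Step 1: the metric-specific structural condition.} For each of the fifteen families in Table~\ref{table: 2} I compute $\gamma=\sqrt{-\gauss}$ and the modified Christoffel symbols $\tg^i_{jk}$ from~\eqref{modified Gamma}. I then rewrite the source terms of~\eqref{rho, m equation} in the Riemann invariants $w=\lambda_+=\frac{m+1}{\rho}$ and $z=\lambda_-=\frac{m-1}{\rho}$. The goal is sign conditions on combinations of $\tg^i_{jk}$ ensuring that the region
\[
\{-C(t)\le z\le 0\le w\le C(t)\}
\]
is invariant for the parabolic regularisation. Here $C(t)$ solves a Riccati-type ODE driven by $\sup_x|\tg|$, and it stays finite on $[0,T]$ for every finite $T$, which is the source of the semiglobality. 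This is the Chueh--Conley--Smoller invariant-region method. It gives $\frac{M\pm\gamma}{L}\in L^\infty$ and the sign constraints~\eqref{L,M,N sign constraints}, but no upper bound on $\rho$.

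\emph{Step 2: uniform $L^p$ bounds and compactness.} I add artificial viscosity $\e\p_{xx}$ and solve the viscous problem with initial data $(\rho_0,m_0)$. I choose $\rho_0\in L^p_\loc\setminus L^\infty_\loc$, with $w_0,z_0$ bounded and of opposite signs. To get $\rho^\e\in L^p_\loc$ uniformly in $\e$, I use a relative-entropy estimate against a spatially homogeneous background $(\bar\rho(t),\bar m(t))$ solving the source ODE of~\eqref{rho, m equation}. The entropy $\eta$ is singular, growing like $\rho^{p}$; a natural candidate is built from $\rho\, h(w,z)$, exploiting $m=\frac{\rho(w+z)}{2}$ and $1=\frac{\rho(w-z)}{2}$. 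A Gronwall argument then bounds $\int\eta(U^\e|\bar U)\,\dd x$ on compact sets. Next I verify that $\p_t\eta+\p_x q$ is compact in $H^{-1}_\loc$ for a sufficiently rich entropy family. The entropies are controlled using the viscous dissipation, the $L^p$ bound, and the Murat lemma. Finally, I apply compensated compactness: because the system is linearly degenerate, I work with the bounded Riemann invariants $(w,z)$, for which Tartar's commutation relation with the Chaplygin entropies forces strong convergence. The limit is then an $L^p$ weak solution.

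\emph{Step 3: geometry.} I set $L=\gamma\rho$, $M=-\gamma m$ and $N=\gamma(m^2-1)/\rho$. The limits satisfy the Gauss equation pointwise and the Codazzi equations distributionally, with $L,M,N\in L^p_\loc$. The low-regularity fundamental theorem (Mardare-type, requiring $\two\in L^p_\loc$ with $p\ge2$) then yields a $W^{2,p}_\loc$ immersion. Because $\rho_0\notin L^\infty$ near some point, and the $L^p$ estimate combined with the invariant region propagates a lower bound from below by comparison, the solution stays unbounded on every layer $[0,\delta_0]$. This gives $\two\notin L^\infty_\loc$.

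The main obstacle is Step 2. The entropy estimate must close without an $L^\infty$ bound on $\rho$, and the nonstationary background produces source terms of order $\rho|\tg|$ that must be absorbed. I expect to choose the weight in $\eta$ so that these source terms have a sign or are dominated by $C(t)\eta$; this is exactly where the family-specific conditions from Step 1 enter.
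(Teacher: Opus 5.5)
Your invariant-region step, the singular entropy $\rho\bigl(w^{1-p}+(-z)^{1-p}\bigr)=\rho^p\bigl[(1+m)^{1-p}+(1-m)^{1-p}\bigr]$ taken relative to a spatially homogeneous ODE background, and the appeal to a low-regularity fundamental theorem all match the paper. The genuine gap is the compactness argument at the end of Step~2, on which your Step~3 rests.

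The Chaplygin system is \emph{fully linearly degenerate}: $w$ is transported with speed $z$ and $z$ with speed $w$, so neither characteristic speed depends on its own Riemann invariant. In Lagrangian variables, with $\tau=1/\rho$, $w=u+\tau$ and $z=u-\tau$, the principal part is just the linear wave equation. For such systems Tartar's commutation relation does not reduce the Young measure to a Dirac mass. In Lagrangian variables it only says that the Young measure in $(w,z)$ is a product measure. In Eulerian variables, any $\nu$ for which $\rho\nu/\langle\nu,\rho\rangle$ is a product in $(w,z)$ satisfies it for all pairs $\bigl(\rho(f(w)+g(z)),\,\rho(zf(w)+wg(z))\bigr)$. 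Oscillations of each Riemann invariant are transported, not destroyed. So the claim that the commutation relation ``forces strong convergence'' is false. You are then left with no way to pass to the limit in the nonlinear flux $(m^2-1)/\rho$, i.e.\ to show that the weak limits satisfy $\bar L\bar N-\bar M^2=\kappa_\Sigma$.

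The missing idea is that strong convergence is never needed, because the Gauss--Codazzi system is weakly stable by the div--curl lemma. This is what the paper's Lemma~\ref{lem: comp comp framework} encodes. The approximate Codazzi equations show that both $\p_xM^\e-\p_yL^\e=\operatorname{div}(M^\e,-L^\e)$ and $\p_xN^\e-\p_yM^\e=\operatorname{curl}(M^\e,N^\e)$ are precompact in $W^{-1,2}_\loc$:
\begin{itemize}
\item the viscous terms $\e\gamma\p_{xx}\rho^\e$ and $\e\gamma\p_{xx}m^\e$ tend to zero there, thanks to $\e\iint|\p_xU^\e|^2\le C$, which the paper derives from the uniform convexity of $\eta_2$ on $\{\rho\ge\rho_*\}$;
\item the zeroth-order terms are handled by the uniform $L^p_\loc$ bound.
\end{itemize}
Hence $(M^\e)^2-L^\e N^\e\to\bar M^2-\bar L\bar N$ in distributions, and the Codazzi equations, being linear, pass to weak limits directly. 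This is where $p\ge2$ is needed. No entropy family and no Young-measure reduction are required.

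There are also some smaller points to correct.
\begin{itemize}
\item A Riccati-type comparison ODE (in the paper, $Q'=3K_T(Q+Q^2+Q^3)$) does not stay finite on every $[0,T]$ for fixed initial data; its lifespan is only of order $\log(1/Q_0)$. Semiglobality therefore requires choosing $Q_0$ small depending on $T$. This in turn forces $\rho_0\ge1/Q_0$ and a large background value $R(0)$.
\item The sign conditions you leave open are exactly $\widetilde\Gamma^1_{22}=0$ and $\widetilde\Gamma^1_{12}\ge0$, i.e.\ $G=G(y)$ and $\p_y(E\gamma)\ge0$. The paper verifies these, together with $\log\gamma\in W^{1,\infty}$, for all fifteen families.
\item A lower bound ``by comparison'' does not yield $\two\notin L^\infty_\loc$. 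What you need is that the limit attains $\rho_0$ as $t\to0^+$, and weak attainment in $L^p_\loc$ already suffices: an a.e.\ bound $\bar\rho\le B$ on $K\times(0,\delta_0)$ would pass to $\rho_0$. This follows from the distributional equation for $U-\overline U$ together with the uniform $L^\infty_tL^p_x$ bound. The paper goes further and upgrades it to a strong trace, using continuity of the relative entropy at $t=0$.
\item Your Gr\"onwall estimate ``on compact sets'' needs the flux bound $|Q_p(U\mid\overline U)|\le C\,H_p(U\mid\overline U)$ and a slowly decaying weight, since the viscous problem has no finite propagation speed.
\end{itemize}
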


\begingroup
\small
\setlength{\tabcolsep}{3pt}
\renewcommand{\arraystretch}{1.15}

\begin{longtable}{
    @{}
    >{\centering\arraybackslash}p{7cm}
    >{\centering\arraybackslash}p{2.8cm}
    >{\centering\arraybackslash}p{4.5cm}
    @{}
}

\caption{Families of negatively curved metrics admitting
\(W^{2,p}_{\loc}\)-isometric immersions. Throughout, \(\delta_0>0\) is an arbitrarily
small constant and \(T>0\) is an arbitrarily large constant.}
\label{table:2}
\\

\toprule
\textbf{Metric}
&
\textbf{Shape}
&
\textbf{Domain}
\\
\midrule
\endfirsthead

% Header repeated at the beginning of every subsequent page
\multicolumn{3}{c}{%
\tablename~\thetable\ (continued)
}
\\
\toprule
\textbf{Metric}
&
\textbf{Shape}
&
\textbf{Domain}
\\
\midrule
\endhead

% Footer printed at the bottom of every non-final page
\midrule
\multicolumn{3}{r}{\emph{Continued on the next page}}
\\
\endfoot

% Footer printed only at the end of the table
\bottomrule
\endlastfoot

\(g=(ay^2+by+c)\,dx^2+dy^2\), where
\(a>0\) and \(b^2-4ac<0\)
&
helicoid-type
&
\(\R\times[-b/(2a),T]\)
\\
\midrule

\(g=e^{ay}\,dx^2+dy^2\), where \(a>0\)
&
helicoid-type
&
\(\R\times[0,T]\)
\\
\midrule

\(g=y^q\,dx^2+dy^2\), where \(q>2\)
&
helicoid-type
&
\(\R\times[\delta_0,T]\)
\\
\midrule

\begin{small}
$g =\cosh(ay)\,dx^2 + dy^2$ with $a >0$\end{small} 
& helicoid-type
& $\R\times [\delta_0,T]$
\\
\midrule
\begin{small}
$g =(a-y)^{-b}\,dx^2 + dy^2$ with $a,b>0$ \end{small} 
& helicoid-type
& $\R\times[-T,a-\delta_0]$ 
\\
\midrule
\begin{small}
$g =(\sinh y+c)\,dx^2 + dy^2$ with $c>1$\end{small} 
& helicoid-type
& \begin{footnotesize}$\R\times \left[{\rm arcsinh}\left(\sqrt{2c^2+2}-c\right)+\delta_0,T\right]$\end{footnotesize} 
\\
\midrule
\begin{small}
$g = \cosh^a y\,dx^2 + dy^2$ with $a \geq 2/3$\end{small} 
& helicoid-type
& \begin{footnotesize}$\R\times ]0,T[$ \end{footnotesize}
\\
\midrule
\begin{small}
$g = y^{2}(1+y^2)^{2\beta}\,dx^2 + (1+y^2)^{2\beta}\,dy^2$ with $\beta > 0$\end{small} 
& generalised Enneper surfaces
& $\R \times [\delta_0,T]$ 
\\
\midrule
\begin{small}
$g = e^{ay}\,dx^2 + e^{-ay}\,dy^2$ with $a>0$\end{small} 
& reciprocal-type
& $\R \times [\delta_0,T]$
\\
\midrule
\begin{small}
$g=A\cosh (\omega y) \, dx^2 + \left(A\cosh (\omega y)\right)^{-1}\,dy^2$ with $A, \omega>0$\end{small} 
& reciprocal-type
& $\R \times [0,T]$
\\
\midrule

\begin{small}
$g=(1+y^2)\,dx^2+\frac{dy^2}{1+y^2}$\end{small} 
& reciprocal-type
& $\R \times [0,T]$
\\
\midrule

\begin{small}
$g=e^{ay^q}\left(dx^2+dy^2\right)$ with $a>0$, $1 <q \leq 2$\end{small} 
& conformal
& $\R \times \left[\left(\frac{2-q}{aq}\right)^{\frac 1q},T\right]$
\\
\midrule

\begin{small}
$g=e^{ay^2+by}\left(dx^2+dy^2\right)$ with $a,b>0$\end{small} 
& conformal
& $\R \times [\delta_0,T]$ 
\\
\midrule
\begin{small}
$g=e^{a \sinh y}\left(dx^2+dy^2\right)$ with $a>0$\end{small} 
& conformal
& \begin{footnotesize}$\R \times \left[\delta_0 ,T\right]$\end{footnotesize}
\\
\midrule
\begin{small}
$g=\cosh^a (y)\left(dx^2+dy^2\right)$ with $a\geq 2$\end{small} 
& conformal
& $\R \times [\delta_0,T]$ 

% Insert the remaining rows here, using exactly the same format.
% Do not place \bottomrule manually after the final row, because
% it is already supplied by \endlastfoot.

\label{table: 2}
\end{longtable}

\endgroup

In fact, we shall establish the existence results of isometric immersions for negatively curved metrics under more general structural conditions  easily checkable for the examples in Table~\ref{table: 2}. 
\begin{itemize}
    \item 
    For metrics of the form $$g=E(x,y)dx^2 + G(y)dy^2,$$ we specify the sign conditions on certain modified Christoffel symbols $\tg^i_{jk}$ (see \eqref{modified Gamma}) that warrant the existence of invariant regions for Riemann invariants. This leads to $W^{2,p}$-isometric immersions via compensated compactness and entropy analysis over arbitrarily large (but finite) rectangles $[a,b]\times[0,T]$.
    
    \item 
    If we further restrict ourselves to metrics of the form $$g=E(y)dx^2 + G(y)dy^2,$$ we obtain semiglobal $W^{2,p}_\loc$-isometric immersions over $\R\times[0,T]$ with arbitrarily large (but finite) $T>0$ by employing the relative entropy Theorem~\ref{thm:global-relative-entropy}, which may be of independent interest beyond the isometric immersion problem. 
    
\end{itemize}

\begin{theorem}\label{thm: more general metrics}
\begin{enumerate}
    \item 
    Consider $g = E(x,y)\,dx^2 + G(y)\,dy^2$ with $E, G>0$ of finite $W^{3,\infty}$-norm on $\I\times[0,T]$, where $\gamma = \sqrt{-\gauss}>0$, $T$ is any positive number, and $\I\subset\R$ is any bounded interval. Suppose that $$\p_y\left(E\gamma\right) \geq 0\qquad\text{a.e. on $\I\times[0,T]$}.$$ Then, for any given $p \in [2,\infty[$, there exists an isometric immersion $\iota:(\I\times[0,T], g) \to (\R^3,\delta)$ of $W^{2,p}$-regularity. Moreover, we can require their second fundamental forms to lie in $L^p \setminus L^\infty$. 

\smallskip

\item 
If, in addition, $E(x,y)\equiv E(y)$ and $\log(\gamma) \in W^{1,\infty}(\R\times [0,T])$, then there exists a semiglobal isometric immersion $\iota:(\R\times[0,T], g) \to (\R^3,\delta)$ of $W^{2,p}_\loc$-regularity. Moreover, we can require their second fundamental forms to lie in $L^p_\loc \setminus L^\infty_\loc$. 
\end{enumerate}

\end{theorem}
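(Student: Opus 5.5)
The plan is to solve the balance law \eqref{rho, m equation} for $(\rho,m)$ in the region $\lambda_-\le 0\le \lambda_+$, with $t=y$ as time, and then recover $(L,M,N)=(\gamma\rho,-\gamma m,\gamma(m^2-1)/\rho)$. The fundamental theorem of surface theory with low regularity (\S\ref{sec: fund theorem}) then produces a $W^{2,p}$ immersion once $\two\in L^p$ and the Gauss--Codazzi equations \eqref{eq:Gauss-Codazzi} hold in the sense of distributions.

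\textbf{Riemann invariants and invariant region.} For $g=E\,dx^2+G(y)\,dy^2$ the Christoffel symbols are explicit: $\G^1_{11}=\p_xE/(2E)$, $\G^1_{12}=\p_yE/(2E)$, $\G^2_{11}=-\p_yE/(2G)$, $\G^2_{22}=G'/(2G)$, $\G^2_{12}=\G^1_{22}=0$. I will rewrite \eqref{rho, m equation} in the Riemann invariants $w=\lambda_+=(m+1)/\rho$ and $z=\lambda_-=(m-1)/\rho$, with $\rho=2/(w-z)$ and $m=(w+z)/(w-z)$. The source terms become quadratic polynomials in $(w,z)$ whose coefficients are the modified symbols $\tg^i_{jk}$ from \eqref{modified Gamma}. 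Next I add artificial viscosity $\e\p_{xx}$ and check the Chueh--Conley--Smoller conditions on the region
\[
\Sigma_C=\{-C\le z\le 0\le w\le C\}.
\]
On the edges $\{w=0\}$ and $\{z=0\}$, each source term reduces to a multiple of the other invariant times a combination of $\tg$'s. I expect the sign of that combination to be governed exactly by $\p_y(E\gamma)\ge0$: on $w=0$ the reaction term should be $\propto z^2\,\p_y\log(E\gamma)$ with the correct sign. The edges $w=C$ and $z=-C$ cannot be invariant for a fixed $C$, because quadratic terms grow. I will therefore let $C=C(t)$ solve a Riccati-type ODE $C'=aC^2+bC$, with $a,b$ bounded by $W^{3,\infty}$ norms. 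This ODE is the ``nonstationary ODE background''. Where blow-up might occur, I will use the $\tg$ sign structure to kill the $C^2$ term, so that $C(t)$ stays finite on $[0,T]$.

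\textbf{Unbounded data, entropy estimates and compactness.} I take initial data with $w_0,z_0$ bounded and of opposite signs but $w_0-z_0$ vanishing at a point, for example $w_0-z_0\sim|x|^{\alpha}$ with $\alpha p<1$. Then $\rho_0=2/(w_0-z_0)\in L^p_\loc\setminus L^\infty_\loc$. Boundedness of $w,z$ gives bounded wave speeds, but $\rho$ is not uniformly bounded. I will derive uniform $L^p$ bounds on $\rho^\e$ from a singular entropy pair $\eta\sim\rho^{p}$, possibly built from $(w-z)^{-p}$, which for the Chaplygin gas is linearly degenerate and admits an explicit flux. This gives the bound $\frac{d}{dt}\int\eta\le C\int\eta$, uniform in $\e$. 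For compactness I note that the Chaplygin system is linearly degenerate, so it is better to work directly with the bounded Riemann invariants. Tartar's commutation relation with the entropy pairs $(w,\,zw)$ and $(z,\,wz)$ in Riemann-invariant form, together with the div-curl lemma, should force the Young measure to reduce to a point or to a segment on which the nonlinearities are affine. The $H^{-1}_\loc$ compactness of entropy dissipation follows from the viscous energy estimate $\e\int|\p_x(w,z)|^2\le C$ together with the bounded invariant region. This gives strong convergence of $(w,z)$ and weak $L^p$ control of $\rho$. I expect this passage, namely identifying weak limits of $1/(w-z)$ near the degeneracy $w=z=0$, to be the main obstacle. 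My fix is to use uniform $L^{p+1}$ integrability of $\rho$ (by increasing $p$) plus a.e. convergence of $(w,z)$ and Vitali's theorem. The limit then satisfies \eqref{eq:Gauss-Codazzi} distributionally, and $\two\notin L^\infty$ near the initial concentration because of the finite propagation speed bounded by $C(t)$.

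\textbf{Part (2).} When $E=E(y)$ the coefficients are $x$-independent. I will solve on expanding intervals $\I_k\uparrow\R$, with cut-off initial data and boundary conditions on the invariant region, and obtain $k$-uniform local estimates using the relative entropy Theorem~\ref{thm:global-relative-entropy}. The relative entropy is taken with respect to the $x$-independent ODE solution $(\bar\rho(t),\bar m(t))$, and $\log\gamma\in W^{1,\infty}$ keeps the Gronwall constant finite. A diagonal argument then gives a $W^{2,p}_\loc$ immersion on $\R\times[0,T]$.
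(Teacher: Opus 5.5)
\paragraph{The compactness step fails.} Both characteristic fields of the Chaplygin system are linearly degenerate: $w$ is transported with speed $z$, and $z$ with speed $w$. So Tartar's commutation relation does not collapse the Young measure of $(w^\e,z^\e)$. Oscillations in $w$ can simply be transported with speed $z$, and nothing in your estimates rules them out. (Also, $(w,zw)$ and $(z,wz)$ are not entropy pairs. For this system the entropies are exactly $\rho\bigl(f(w)+g(z)\bigr)$, with fluxes $\rho\bigl(zf(w)+wg(z)\bigr)$.) The a.e.\ convergence your Vitali argument needs is therefore not available.

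The missing idea is that no strong convergence is needed, because the Gauss--Codazzi system is weakly continuous. The Codazzi equations are linear in $(L,M,N)$. The Gauss equation holds \emph{exactly} at the $\e$-level, since $N^\e:=\gamma\bigl((m^\e)^2-1\bigr)/\rho^\e$, and $L^\e N^\e-(M^\e)^2$ is a div--curl null form. By Lemma~\ref{lem: comp comp framework}, weak $L^p_\loc$ limits of $(L^\e,M^\e,N^\e)$ solve \eqref{eq:Gauss-Codazzi} once the Codazzi residuals $\e\gamma\p_{xx}\rho^\e$ and $\e\gamma\p_{xx}m^\e$ vanish in $W^{-1,2}_\loc$. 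That requires $\sqrt{\e}\,\p_x\rho^\e$ bounded in $L^2$. The paper gets this from the uniformly convex entropy $\eta_2=2\rho^2/(1-m^2)$, using $\rho\geq 1/Q$. Your bound on $\sqrt{\e}\,\p_x(w^\e,z^\e)$ does not give it, since $\p_x\rho=-\tfrac{\rho^2}{2}\p_x(w-z)$ and $\rho$ is unbounded. With this idea your ``main obstacle'' (weak limits of $1/(w-z)$) disappears: only the linear quantity $L^\e=\gamma\rho^\e$ has to be passed to a weak limit.

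\paragraph{Two further steps need repair.} First, the $(w,z)$-sources are cubic, not quadratic. For instance, $w\A_2$ contains $\tg^2_{11}w^2z$, and $\tg^2_{11}=-\p_yE/(2G)$ has no definite sign. So $C'=aC^2+bC$ does not dominate the sources on the moving edges, and no sign structure is available to kill the higher-order terms. (On the edge $w=0$ the source is $-\tg^1_{12}z$, which is linear in $z$.) The paper instead takes $Q'=3K_T(Q+Q^2+Q^3)$ and avoids blow-up on $[0,T]$ by choosing $Q_0$ small, since the lifespan is $\gtrsim\log(1/Q_0)$. This only forces $\rho_0\geq 1/Q_0$, which is compatible with your singular data.

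Second, finite propagation speed does not show that a \emph{weak} limit keeps the initial singularity. The paper proves a strong $L^p_\loc$ initial trace, $U(t,\cdot)\to U_0$ as $t\to0^+$. This comes from continuity of the relative entropy at $t=0$ together with the Radon--Riesz property, and it is what places $\bar L$ outside $L^\infty_\loc$.

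Your part (2) --- relative entropy against an $x$-independent ODE state --- matches the paper. The paper works directly on $\R$ with exponential cut-off weights rather than on expanding intervals.
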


Geometrically, metrics of the form $g=E(y)dx^2+G(y)dy^2$ (in particular, those in Theorem~\ref{thm: more general metrics} (2) and Table~\ref{table: 2}) all have $\p_x$ as a Killing vector field, so these are cohomogeneity-one metrics.  Introduce the geodesic variable $
ds=\sqrt{G(y)}\,dy$
and define $
B(s):=\sqrt{E(y(s))}$. The metric (with primes denoting differentiation with respect to \(s\))  can be written as
\begin{equation}
g=B(s)^2\,dx^2+ds^2.
\label{eq:metric-geodesic-coordinates}
\end{equation}
The Gaussian curvature is $
\gauss=-\frac{B''}{B}$, and the conditions in Theorem~\ref{thm: more general metrics} are then equivalent to 
\begin{equation*}
B'' > 0
\qquad\text{and}\qquad 
(B^2\gamma)'\geq0 \Longleftrightarrow -\frac{\gamma'}{\gamma}
    \leq
    2\frac{B'}{B}.
\end{equation*}
This describes a substantially more general class of negatively
curved metrics than the particular explicit families listed in Theorem~\ref{thm: main},  Table~\ref{table: 2}. Note that Theorem~\ref{thm: more general metrics} (2) also applies to metrics of the form
\begin{align*}
    g = E(y)dx^2+2F(y)dxdy + G(y)dy^2,
\end{align*}
as we can eliminate $2F(y)dxdy$ by taking the coordinate transform $$x' = x+\int^y \frac{F(t)}{E(t)}\,\dd t$$ and thus rewriting $$g=E(y)dx'^2 + \left(G(y)-\frac{F(y)^2}{E(y)}\right)dy^2.$$

Our restriction in Theorem~\ref{thm: more general metrics} (2) to $E(x,y)\equiv E(y)$ for semiglobal isometric immersion is due to  a technical reason: we employ entropy estimates to obtain uniform $L^p_\loc$-bounds for the approximate solutions to the Gauss--Codazzi equations (or their fluid formulation in the $(\rho,m)$ variables). This  works only on bounded domains; see Remark~\ref{remark: finite domain} in \S\ref{sec: entropy}. To obtain semiglobal immersions, we need to modify the entropy arguments by considering the \emph{relative entropy}  \emph{\`{a} la} C. Dafermos with respect to a background state lying inside the invariant regions. The nonlinear feature of the Gauss--Codazzi equations forbids the existence of a constant equilibrium state; instead, we resort to comparison arguments to construct  background states from ODEs derived for $g$ with all metric coefficients depending only on one variable. This motivates the following: 
\begin{theorem}
\label{thm:global-relative-entropy}
Let
$U^\e = [\rho^\e, m^\e]^\top$ be a smooth solution in $[0,T]\times\mathbb{R}$ to the balance law
\begin{equation*}
\partial_tU^\varepsilon
+
\partial_xF(U^\varepsilon)
=
\varepsilon\partial_{xx}U^\varepsilon
+
S(U^\varepsilon),
\end{equation*}
where $F(\rho,m)
=\begin{bmatrix}
    m\\
    \dfrac{m^2-1}{\rho}
\end{bmatrix}$ and $S(\rho,m)
=
\begin{bmatrix}
-a(t)\rho-2b(t)m-c(t)\left(\dfrac{m^2-1}{\rho}\right)\\
-2k(t)m
\end{bmatrix}$. Assume that $a,b,c,k\in L^\infty(]0,T[)$, $k(t)\geq 0$ for \emph{a.e.} $t\in[0,T]$, and $\rho^\e\geq\rho_*>0$, $-1<m^\e<1$ on $[0,T]\times\mathbb{R}$ for $\rho_*$ independent of $\e$. Set $\overline U(t):=[R(t),0]^\top$, where $R$ is the solution to the ODE: 
\begin{equation*}
R'(t)
=
-a(t)R(t)+\frac{c(t)}{R(t)}
\end{equation*}
that satisfies $0<R_*\leq R(t)\leq R^*<\infty$ for all $t \in [0,T]$. Then for given $p\in[2,\infty[$ we have  
\begin{equation*}
\sup_{\e>0}
\sup_{0\leq t\leq T}
\int_{\mathbb{R}}
H_p\bigl(
U^\e(t,x)\mid\overline U(t)
\bigr)\,\dd x
<\infty,
\end{equation*}
provided that $\sup_{\e>0}
\int_{\mathbb{R}}
H_p\bigl(
U^\e_0(x)\mid\overline U(0)
\bigr)\,\dd x
<\infty$. In particular, 
\begin{equation*}
\sup_{\varepsilon>0}
\left\|
\rho^\varepsilon-R
\right\|_{L^\infty(0,T;L^p(\mathbb{R}))}
<\infty.
\end{equation*}
Moreover, we have the gradient estimate:
\begin{align*}
\sup_{\e>0}\left\{\e\iint_{[0,T]\times\R} |\p_xU^\e|^2 \,\dd t\,\dd x\right\} < \infty.
\end{align*}
Here the relative entropy is defined as follows:
\begin{align*}
&H_p\left(U=[\rho,m]^\top\mid\overline U=[R,0]^\top\right)
\\
&\quad:= 
\rho^p \left[(1+m)^{1-p} + (1-m)^{1-p}\right]
-
2R^p
-
2pR^{p-1}(\rho-R).
\end{align*}
\end{theorem}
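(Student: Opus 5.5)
The plan is a Dafermos-type relative entropy computation for the viscous balance law, closed by Gronwall's inequality. The first step is to check that
\[
\eta_p(\rho,m):=\rho^p\left[(1+m)^{1-p}+(1-m)^{1-p}\right]
\]
is an entropy for the homogeneous Chaplygin system. In terms of the Riemann invariants $w=\lambda_+=\frac{m+1}{\rho}$ and $z=\lambda_-=\frac{m-1}{\rho}$ we have
\[
\eta_p=\rho\left(w^{1-p}+(-z)^{1-p}\right).
\]
Since the Chaplygin gas is linearly degenerate, every function of the form $\rho f(w)+\rho g(z)$ is an entropy. So $\eta_p$ is an entropy, and it comes with an explicit flux $q_p$.

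Next I compute $\nabla\eta_p$ at $\overline U=[R,0]^\top$. This gives $\eta_p(\overline U)=2R^p$, $\p_\rho\eta_p(\overline U)=2pR^{p-1}$ and $\p_m\eta_p(\overline U)=0$. Hence $H_p(U\mid\overline U)=\eta_p(U)-\eta_p(\overline U)-\nabla\eta_p(\overline U)\cdot(U-\overline U)$ is exactly the relative entropy. I also note that $S(\overline U)=[-aR+c/R,\,0]^\top$, so the ODE for $R$ says precisely that $\overline U'=S(\overline U)$. In other words, $\overline U$ is an $x$-independent solution of the inviscid balance law. This is why the background is taken nonstationary.

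Multiplying the viscous equation by $\nabla\eta_p(U^\e)-\nabla\eta_p(\overline U)$ and using the equation for $\overline U$ gives the identity
\begin{align*}
&\p_t H_p(U^\e\mid\overline U)+\p_x\Big(q_p(U^\e)-\nabla\eta_p(\overline U)\cdot F(U^\e)\Big)
-\e\p_{xx}H_p+\e\,\nabla^2\eta_p(U^\e)(\p_xU^\e,\p_xU^\e)\\
&\qquad=\big(\nabla\eta_p(U^\e)-\nabla\eta_p(\overline U)\big)\cdot S(U^\e)-\nabla^2\eta_p(\overline U)\,S(\overline U)\cdot(U^\e-\overline U).
\end{align*}
Here I use that $\nabla\eta_p(\overline U)$ does not depend on $x$. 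I integrate in $x$; the divergence terms drop out. To make this rigorous I would first integrate against a cutoff $\phi(x/L)$ and let $L\to\infty$. The cutoff error terms are controlled because the flux difference and $\e\p_x H_p$ are bounded by $C(1+H_p)$ and $\e$ times the dissipation, respectively, for states with $\rho\ge\rho_*$ and $|m|<1$.

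The main obstacle is to bound the right-hand side $\mathcal R$ by $C(1+\|a,b,c,k\|_{L^\infty})H_p$, with no leftover linear term and no term that grows faster than $\eta_p$ as $m\to\pm1$. I would split $\mathcal R$ into three parts.

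\emph{The $k$-term.} Its contribution is $-2k\,m\,\p_m\eta_p(U^\e)$. A sign check gives $m\,\p_m\eta_p=(p-1)\rho^p\,m\left[(1-m)^{-p}-(1+m)^{-p}\right]\geq0$. Since $k\ge0$, this term is nonpositive and is simply discarded. This is exactly where the singular growth near $m=\pm1$ is absorbed, and it is the reason the hypothesis $k\ge0$ is needed.

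\emph{The $a,b,c$-terms.} Their contribution is $\big(\p_\rho\eta_p(U^\e)-\p_\rho\eta_p(\overline U)\big)S_1(U^\e)-\p_\rho^2\eta_p(\overline U)S_1(\overline U)(\rho^\e-R)$. By construction this expression vanishes to second order at $U=\overline U$. Near $\overline U$ I Taylor expand and use $H_p\simeq|U-\overline U|^2$; this equivalence follows from strict convexity of $\eta_p$ on $\{\rho\ge\rho_*,\,|m|<1\}$ together with $R_*\le R\le R^*$. Away from $\overline U$ I bound each term by $C\eta_p(U^\e)\le C'H_p$. For this I use $\rho\ge\rho_*$, $|m|<1$, $|m^2-1|/\rho\le1/\rho_*$, and the pointwise bound $|\p_\rho\eta_p|\,\rho\le p\,\eta_p$.

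Once $\int\mathcal R\le C\int H_p$ is established, Gronwall's inequality yields the uniform bound on $\sup_t\int H_p$. The $L^\infty_tL^p_x$ bound on $\rho^\e-R$ then follows from $|\rho-R|^p\le C H_p$. For $|\rho-R|$ bounded this holds because $|\rho-R|^p\le C|\rho-R|^2\lesssim H_p$. For large $\rho$ it holds because $\rho^p\lesssim\eta_p$, and $\eta_p$ dominates the linear and constant parts of $H_p$.

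The gradient estimate comes from the dissipation term. Computing the Hessian, the $m$-direction entry is of order $\rho^p$ and the $\rho$-direction entry is of order $\rho^{p-2}$. Because $p\ge2$ and $\rho\ge\rho_*$, this gives $\nabla^2\eta_p\ge c\,\mathrm{Id}$ uniformly on the region, and therefore $\e\iint|\p_xU^\e|^2\le C$. The step I expect to be delicate is verifying this uniform convexity, including the cross term $\p_\rho\p_m\eta_p$, together with the quadratic-vanishing estimate for $\mathcal R$. I would first try to do both through the Riemann-invariant form $\rho\left(w^{1-p}+(-z)^{1-p}\right)$, where the convexity is transparent.
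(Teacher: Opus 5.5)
Your overall architecture matches the paper's. You use the same entropy $\eta_p=\rho\bigl(w^{1-p}+(-z)^{1-p}\bigr)$, the same background $\overline U=[R(t),0]^\top$ with $\overline U'=S(\overline U)$, and the same relative entropy identity and source term. You also dispose of the $k$-term the same way, via $m\,h_p'(m)\geq0$, and close with Gr\"onwall.

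Two sub-steps are done differently, and both work.

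\emph{The $a,b,c$-part of the source.} You argue softly: it vanishes quadratically at $\overline U$, so Taylor applies nearby, and far away it is bounded by $C\eta_p$. This is valid once you add one line showing $\eta_p\le CH_p$ on $\{|U-\overline U|\geq\delta\}$. That line is easy: the affine part subtracted in $H_p$ is $O(1+\rho)$ while $\eta_p\geq2\rho^p$, and on the remaining bounded set $H_p\geq c\delta^2$. The paper instead proves explicit global inequalities from the splitting $H_p=\rho^p(h_p-2)+2\Phi_p(\rho\mid R)$, namely $|G_\rho|(|\rho-R|+|m|)\le CH_p$ and a bound on the Taylor remainder $B_\rho$. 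That route needs no case distinction.

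\emph{The gradient estimate.} You use $D^2\eta_p\geq c\,\mathrm{Id}$ for every $p$. The paper only uses the explicit $p=2$ Hessian and transfers the bound via $H_2\le CH_p$. Your claim is true, but the sizes of the diagonal entries do not prove it. The functions $\rho w^{1-p}$ and $\rho(-z)^{1-p}$ are perspective functions with rank-one Hessians. Hence the leading singular parts of $\p_{\rho\rho}\eta_p\,\p_{mm}\eta_p$ and $(\p_{\rho m}\eta_p)^2$ cancel as $m\to\pm1$. What you need is the exact determinant
\[
\det D^2\eta_p=p^2(p-1)^2\rho^{2p-2}\Bigl[(1-m)^{1-p}(1+m)^{-1-p}+(1+m)^{1-p}(1-m)^{-1-p}+2(1-m^2)^{-p}\Bigr].
\]
Together with $\lambda_{\min}\geq\det/\operatorname{tr}$, this gives $\lambda_{\min}\geq c\rho^{p-2}\geq c\rho_*^{p-2}$. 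The paper's detour through $H_2\le CH_p$ is exactly what spares this computation.

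The step that fails as you state it is the cutoff. You propose to control the viscous cutoff term through $\e\p_xH_p$ and the dissipation. After Cauchy--Schwarz this leaves $(D^2\eta_p(U))^{-1}[v,v]$ with $v=\nabla\eta_p(U)-\nabla\eta_p(\overline U)$. Already for $p=2$ this behaves like $4\rho^2/(1-m^2)^2$ as $|m|\to1$. That is not bounded by $C(1+H_2)$, since $H_2\approx2\rho^2/(1-m^2)$. You should instead integrate by parts twice, putting both derivatives on the cutoff.

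There is a second problem with a compactly supported cutoff $\phi(x/L)$. The error term $L^{-1}\int_{L<|x|<2L}(1+H_p)\,\dd x$ is not dominated by $\int\phi(x/L)H_p\,\dd x$. So Gr\"onwall closes only if you already know $\int_{\R}H_p(U(t))\,\dd x<\infty$.

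The paper avoids this with the positive weight $\chi_L(x)=\exp\bigl(1-\sqrt{1+x^2/L^2}\bigr)$, which satisfies $|\chi_L'|\le\chi_L/L$ and $|\chi_L''|\le2\chi_L/L^2$. It combines this with the flux bound $|Q_p|\le C_QH_p$. Your flux is exactly $Q_p$, because $q_p(\overline U)=\nabla\eta_p(\overline U)\cdot F(\overline U)=0$. With this weight, every error term is bounded by a multiple of $\int\chi_LH_p\,\dd x$. Gr\"onwall then closes for each fixed $L$, and monotone convergence lets you send $L\to\infty$.
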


\begin{remark}\label{remark: special isom imm}
Along the proof of Theorem~\ref{thm: more general metrics}, by a straightforward application of Theorem~\ref{thm:global-relative-entropy} to the parabolically regularised version of~\eqref{rho, m equation} with artificial viscosity, we obtain the following: 
\begin{quote}
    Given a metric $g=E(y)dx^2+G(y)dy^2$ on $\R\times[0,T]$ with $\frac{d}{dy}(E\gamma) \geq 0$ a.e., $E, G>0$ belong to $W^{3,\infty}$, and $\log\gamma \in W^{1,\infty}$. For arbitrary $T>0$, there exists a $W^{3,\infty}$-isometric immersion of $(\R \times [0,T],g)$ into $(\R^3,\delta)$.
\end{quote}
\end{remark}

\subsection{Outline of the proof}
Our proof of the Main Theorems~\ref{thm: main}, \ref{thm: more general metrics} is outlined as follows:

\begin{enumerate}
    \item 
First, consider the fluid dynamical formulation for the Gauss--Codazzi system~\eqref{eq:Gauss-Codazzi}; namely, the balance law~\eqref{rho, m equation} for 1D Chaplygin gas. By adding artificial viscous terms $\left[    \e\p_{xx}\rho,\,
    \e\p_{xx}m
\right]^\top,$  we introduce the \emph{parabolic regularisation}~\eqref{parabolic regularisation} of the balance law.

\item
Next, we introduce the \emph{Riemann invariant coordinates} $\lambda_\pm$ as in~\eqref{eigenvalues} for the parabolically regularised system.  By the method of \emph{invariant regions}, the $\linf$-bounds for $\lambda_\pm$ can be established uniformly in $\e$ subject to ready-to-check structural conditions on the PDE~\eqref{rho, m equation}, formulated in terms of the signs of Christoffel symbols and $\na\log\gamma$, where $\gamma=\sqrt{-\gauss}$.

\item 
Then, by an entropy analysis involving singular entropies that blow up as the Riemann invariants degenerate to zero, we derive uniform $L^p$-bounds for the fluid variables $(\rho, m)$ --- or equivalently, for the geometric variables 
 $\two=\begin{bmatrix}
    L&M\\
    M&N
\end{bmatrix}$  --- in the presence of concentration $(\rho=\infty)$ for any $p \in [2,\infty[$ over arbitrarily large but finite rectangular domains. In this process, the uniform bounds for the $\linf$-norm of Riemann invariants obtained in Step~(2) will be crucially exploited.\footnote{Here, the variables $\rho, m, L,M,N\ldots$ depend on the regularisation parameter $\e$. In later parts, we write a superscript ${}^\e$ to emphasise this dependency. Any useful bound in this paper is uniform in $\e$.}  

\item 
In the case of special metrics $g=E(y)dx^2+G(y)dy^2$, we may upgrade the analysis in the above steps to the strip $\R\times[0,T]$, thanks to the technical Theorem~\ref{thm:global-relative-entropy} via relative entropy arguments. By carefully analysing the choice of initial data, we argue that the width $T>0$ of the strip can be chosen arbitrarily large.

\item 
With the uniform $L^p_\loc$-bounds for $(L,M,N)$ at hand, we may pass to the weak limits to obtain weak solutions to the Gauss--Codazzi system~\eqref{eq:Gauss-Codazzi}, via a \emph{compensated compactness} framework 
(\emph{cf.} \cite{lisu, chen2010a, chen2010c, chenli2018}) established for $L^p_\loc$-solutions. See Lemma~\ref{lem: comp comp framework}.

\item 
Once the weak solutions to the Gauss--Codazzi equations in $L^p$ are obtained, $W^{2,p}_\loc$-isometric immersions are constructed by using the \emph{fundamental theorem of surface theory} of low regularity, as detailed in \S\ref{sec: fund theorem}. 

%\item 
%Various families of negatively curved surface metrics including classical minimal surfaces (\emph{e.g.}, helicoid, catenoid, and Enneper surfaces) as special cases, satisfy the structural conditions in Step~(2) above. Then, these families of metrics admit invariant regions for the Riemann invariants $(w^\e, z^\e)$, and hence admitting semiglobal $W^{2,p}_\loc$-isometric immersions for any given $p \in [2,\infty[$. 

\end{enumerate}

%We emphasise once again that the existence of isometric immersions proved in this paper are semiglobal, \emph{i.e.}, proved over finite but arbitrarily large rectangular domains $\I \times [0,T]$ in $\R^2$. Indeed, the invariant region argument in Step~(2) of the outline above requires the density to be bounded from below away from $0$, while the entropy analysis imposes integrability conditions for $\rho$ on $\I$. These altogether force $\I$ to be bounded. Meanwhile, the entropy analysis is based on an argument involving Gr\"{o}nwall's inequality without general decaying mechanisms; see the proof of Proposition~\ref{propn: entropy analysis} (2) below. This requires $T$ to be finite.

\subsection{Organisation}
The remaining parts of the paper are organised as follows:

\begin{itemize}
    \item 
    In \S\ref{sec: prelims} we present some background knowledge of isometric immersions, Gauss--Codazzi equations, and differential geometry of surfaces. 
    
    \item 
    In \S\ref{sec: inv} we introduce the parabolically regularised PDE systems for the fluid variables $(\rho, m)$ and the corresponding Riemann invariants. Moreover, we also present an easy-to-check sufficient condition for the existence of invariant regions (Lemma~\ref{lem: Riem inv}). 
    
    \item 
    In \S\ref{sec: entropy}, we obtain uniform bounds on the density $\rho$ and the first derivatives $\sqrt{\e}\p_x \rho$ and $\sqrt{\e}\p_x m$ over arbitrarily large but finite domains via entropy and energy methods, independently of the parabolic regularisation 
    parameter $\e$.

    \item 
    In \S\ref{sec: new}, we prove Theorem~\ref{thm:global-relative-entropy} via relative entropy arguments, thus establishing uniform  estimates for $(\rho, m)$ on $\R \times [0,T]$.

    \item 
    In \S\ref{sec: existence}, using previously uniform estimates for fluid variables and employing the theory of compensated compactness and the fundamental theorem of surface theory in the weak regularity regime, we prove the existence of semiglobal $W^{2,p}_\loc$-isometric immersions for negatively curved surface metrics whenever the invariant regions exist.

    \item 
    In \S\ref{sec: conclusion}, we conclude the proof of Theorems~\ref{thm: main} and \ref{thm: more general metrics}.

\end{itemize}

\section{Preliminaries}\label{sec: prelims}

In this section, we present some background knowledge on differential geometry of surfaces. See, \emph{e.g.}, \cite{docarmo1992, eisenhart1997, hanhong2006} for more comprehensive treatments.

\subsection{Intrinsic geometry of surfaces}

Let $\Sigma$ be a surface, namely a 2-dimensional differentiable manifold, equipped with a Riemannian metric $g$; as usual we write $g_{11}=E$, $g_{12}=g_{21}=F$, and $g_{22}=G$. The Levi-Civita connection of $g$ is described by the Christoffel symbols $\left\{\G^i_{jk}\right\}_{i,j,k\in\{1,2\}}$:
\begin{align*}
    \G^i_{jk} = \frac{1}{2} \sum_{i,j,k,l=1}^2 g^{i\ell}\left(\p_j g_{kl} + \p_{k}g_{\ell j} - \p_\ell g_{jk}\right),
\end{align*}
where $g^{-1} = \{g^{jk}\}$. By Gauss's Theorema Egregium, the Gaussian curvature is determined solely by $g$. More explicitly, one has \emph{Brioschi's formula} in any local chart with coordinate $(x,y)$:
\begin{align}\label{Brioschi}
\gauss = \frac{
\left[
\begin{vmatrix}
-\frac{1}{2}\partial_{yy}E + \partial_{xy}F - \frac{1}{2}\partial_{xx}G & \frac{1}{2}\partial_x E & \partial_x F - \frac{1}{2}\partial_y E \\
\partial_y F - \frac{1}{2}\partial_x G & E & F \\
\frac{1}{2}\partial_y G & F & G
\end{vmatrix}
-
\begin{vmatrix}
0 & \frac{1}{2}\partial_y E & \frac{1}{2}\partial_x G \\
\frac{1}{2}\partial_y E & E & F \\
\frac{1}{2}\partial_x G & F & G
\end{vmatrix}
\right]}{(EG - F^2)^2}.
\end{align}
When the metric is in the diagonal form,\footnote{Note that $g$ is a symmetric matrix field in any local coordinate system, so it can always be diagonalised therein.} namely $F\equiv 0$, it reduces to
\begin{align*}
\gauss = -\frac{1}{2\sqrt{EG}}
\left[
\frac{\partial}{\partial x}\left(\frac{\p_x G}{\sqrt{EG}}\right)
+
\frac{\partial}{\partial y}\left(\frac{\p_y E}{\sqrt{EG}}\right)
\right].
\end{align*}

\subsection{Isometric immersions and Gauss--Codazzi equations} We consider the isometric immersion $\iota:(\Sigma,g)\to (\mathbb{R}^{3},\delta)$, the Euclidean 3-space. If such $\iota$  exists, then for each point $x\in \Sigma$ the tangent space $T_{\iota(x)}\mathbb{R}^{3}$ splits orthogonally: 
\begin{equation}\label{split}
T_{\iota(x)}\mathbb{R}^{3}\cong T_{x}\Sigma\bigoplus [T_{x}\Sigma]^{\perp}.    
\end{equation}
Denoting by $\overline{\nabla}$ the Levi-Civita connection on $T\mathbb{R}^{3}$ and by $\nabla$ the Levi-Civita connection on $T\Sigma$, let us define $\two' :\Gamma (T\Sigma)\times \Gamma (T\Sigma)\to \Gamma (T\Sigma^{\perp})$ via
\[
\two' (X,Y) := \overline{\nabla}_{X}Y - \nabla_{X}Y, 
\]
where $\G(T\Sigma)$ and $\G(T\Sigma^\perp)$ are the spaces of tangential and normal vector fields along the immersed image of $\Sigma$ via $\iota$. The tensor field $\two'$ describes the \emph{extrinsic geometry} of the isometric immersion $\iota$, \emph{i.e.}, the manner in which $\Sigma$ is immersed in the ambient space $\R^3$. We shall write $\two'=h_{11}\,dx^2 + 2h_{12}\,dxdy + h_{22}\,dy^2$ and 
\begin{align*}
\two'(X,Y,\nu) \equiv \delta\left(\two'(X,Y),\nu\right)\quad\text{for any $X,Y \in \G(T\Sigma)$ and $\nu \in \G(T\Sigma^\perp)$.}
\end{align*} 

The Gauss and Codazzi equations express the splitting of the zero Riemann curvature of $(\mathbb{R}^{3}, \delta)$ along~\eqref{split}. Let $X, Y, Z, W \in \Gamma(T\mathcal{M})$ be tangential vector fields and let $\nu \in \Gamma(T\mathcal{M}^{\perp})$ be a normal vector field. Denote the inner products induced by both $g$ or $\delta$ by $\langle \cdot , \cdot \rangle$, which makes sense for isometric immersion $\iota$. The Gauss equation is then expressed as
\begin{equation}\label{gauss, original}
R(X,Y,Z,W) = \langle \two' (X,Z),\two' (Y,W)\rangle -\langle \two' (X,W),\two' (Y,Z)\rangle,
\end{equation}
and the Codazzi equations as
\begin{equation}\label{codazzi, original}
\overline{\nabla}_{Y}\two' (X,Z,\nu) - \overline{\nabla}_{X}\two' (Y,Z,\nu) = 0. 
\end{equation}
In the above, $R: \G(T\Sigma)^{\otimes 4} \to \R$ is the Riemann curvature tensor of $(\Sigma,g)$.

Let $\{\partial_1, \partial_2\}$ be a local coordinate frame on $\G(T\Sigma)$ and $\nu=\partial_3$ be the unit normal vector field in $\G(T\Sigma^{\perp})$. Taking $X = \partial_1, Y = \partial_2, Z = \partial_1$, and $W = \partial_2$, the Gauss equation~\eqref{gauss, original} gives us
\begin{equation}\label{gauss'}
    R_{1212} = h_{11}h_{22} - (h_{12})^2,
\end{equation}
where $R_{ijkl}=R(\partial_i, \partial_j, \partial_k, \partial_l)$. For the Codazzi equation~\eqref{codazzi, original}, there are two independent choices $(i, j, k) = (1, 2, 1)$ and $(i, j, k) = (1, 2, 2)$. They lead, respectively, to
\begin{equation}\label{codazzi'}
    \left\{ \begin{array}{ll}
\overline{\nabla}_{\partial_2}\Pi (\partial_1, \partial_1, \partial_3) - \overline{\nabla}_{\partial_1}\Pi (\partial_2, \partial_1, \partial_3) = 0, \\[0.5em]
\overline{\nabla}_{\partial_2}\Pi (\partial_1, \partial_2, \partial_3) - \overline{\nabla}_{\partial_1}\Pi (\partial_2, \partial_2, \partial_3) = 0.
\end{array} \right.
\end{equation}
Define the \emph{second fundamental form}\footnote{A slight abuse of notation here: in do Carmo~\cite{docarmo1992} and many texts, $\two'$ is referred to as the second fundamental form. In this work, we shall work primarily with $\two$, whose components are unknowns of \eqref{eq:Gauss-Codazzi}.} associated to the isometric immersion $\iota$:
\begin{equation}\label{two}
\two:=
\frac{1}{\sqrt{\det g}}\two' = \frac{1}{\sqrt{|g|}}\left[ \begin{array}{ll}
h_{11} & h_{12}\\
h_{21} & h_{22}
\end{array} \right] = \left[ \begin{array}{ll}
L & M\\
M & N
\end{array} \right].    
\end{equation}
In view of the definition of Christoffel symbols $\nabla_{\partial_i}\partial_j= \sum_{\ell=1}^2 \Gamma_{ij}^\ell\partial_\ell$ and Gaussian curvature $$\gauss:= \frac{R_{1212}}{{\det g}},$$ we recast the Gauss--Codazzi equations~\eqref{gauss'}, \eqref{codazzi'} into~\eqref{eq:Gauss-Codazzi}. See \cite[\S 2.1]{li} for detailed derivations.

\subsection{Fundamental theorem of surface theory with weak regularity}\label{sec: fund theorem}

P.~O. Bonnet (1860) proved the following foundational result in~\cite{bonnet} (\emph{cf.} Eisenhart \cite{eisenhart1997} for modern treatments), now known as the ``fundamental theory of surface theory'':
\begin{quotation}
Let $\Omega \subset \R^2$ be a domain with a smooth positive definite $2\times 2$-matrix field $g$. Given any $C^\infty$-solution to the Gauss--Codazzi system~\eqref{eq:Gauss-Codazzi} on the domain $\Omega \subset \R^2$. Then, for any point $p_0\in\Omega$, there exists a neighbourhood $\Omega'\subset\Omega$ containing $p_0$, such that $(\Omega',g)$ admits a $C^\infty$-isometric embedding into $(\R^3,\delta).$  Moreover, this isometric embedding is unique modulo translations and rotations in $\R^3$. 
\end{quotation}

Since Bonnet's seminal work, various improved versions of this result which require weaker regularity assumptions on the metric $g$ and solutions to the Gauss--Codazzi equations had been obtained. See, for example, Malliavin~\cite{malliavin1972}, Choquet-Bruhat--DeWitt-Morette--Dillard-Bleick~\cite{choquet1977},  Blume \cite{blume1989}, P.~G. Ciarlet--Larsonneur~\cite{ciarletlarsonneur2002}, S. Mardare~\cite{mardare2003, mardare2005, mardare2007}, Szopos~\cite{szopos2008}, and P.~G. Ciarlet--C. Mardare~\cite{ciarletmardare2019}, etc.\footnote{The above cited works are mostly motivated by or orientated to problems in applied mathematics, especially in elasticity theory.} Recently, Litzinger \cite{litzinger2021} proved the $W^{2,2}_\loc$-version of the fundamental theorem of surface theory, and Li--Su \cite{lisu} extended to $W^{2,d}_\loc$-isometric immersions, with arbitrary $d=2,3,4,\ldots$, for the Gauss--Codazzi-Ricci system in arbitrary dimensions and codimensions. In fact, one may go slightly beyond the critical regularity $W^{2,d}$ for $d \geq 3$ --- we established in \cite{lisu} isometric immersions in the Morrey space $L_2^{q,d-q}$ for $2 < q \leq d$, provided that the Gauss--Codazzi--Ricci equations have $L^{q,d-q}$-weak solutions.

In this paper, we restrict ourselves to $d=2$ and present the following version of the fundamental theorem of surface theory as in~\cite{litzinger2021}. 
\begin{lemma}\label{lem: surface theory}
Let $\Omega \subset \R^2$ be a domain and $p \in [2,\infty]$. Given a metric tensor $g \in W^{1,p}_\loc(\Omega)$ for $p >2$ and $g \in W^{1,2}_\loc\cap \linf(\Omega)$ for $p=2$. Also, given a weak solution  $(\bar{L},\bar{M},\bar{N}) \in L^p_\loc(\Omega)$ to the Gauss--Codazzi system~\eqref{eq:Gauss-Codazzi}. Then, for any simply-connected subdomain $\Omega' \subset \Omega$, there exists a $W^{2,p}_\loc$-isometric immersion $\iota: (\Omega', g) \to (\R^3,\delta)$ whose second fundamental form is $\overline{\two}=\begin{bmatrix}
    \bar{L} & \bar{M}\\
    \bar{M} & \bar{N}
\end{bmatrix}$. In addition, $\iota$ is unique modulo modification on null sets and Euclidean motions in $\R^3$.

\end{lemma}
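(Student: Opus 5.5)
The plan is the Cartan-type moving frame argument at low regularity, following Litzinger~\cite{litzinger2021} and Li--Su~\cite{lisu}. Given $g$ and a weak solution $(\bar L,\bar M,\bar N)\in L^p_\loc$ of~\eqref{eq:Gauss-Codazzi}, I would first build a $g$-orthonormal tangent coframe by Gram--Schmidt: $e^1=\sqrt{E}\,dx+\frac{F}{\sqrt E}\,dy$ and $e^2=\sqrt{\det g/E}\,dy$. This coframe lies in $W^{1,p}_\loc$, since $g\in W^{1,p}_\loc$ and $g$ is positive definite; for $p=2$ I would use $g\in\linf$ to control products. Out of the connection $1$-form $\omega^1_2$ (coefficients in $L^p_\loc$, linear in $\na g$) and the second fundamental form, rewritten in this frame as $\omega^i_3 := h_{ij}\,e^j$ with $h=\sqrt{\det g}\,\overline{\two}$, I would assemble an $\mathfrak{so}(3)$-valued $1$-form $\Omega\in L^p_\loc(\Omega';\mathfrak{so}(3)\otimes\Lambda^1)$.

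The key step is to verify that $\Omega$ satisfies the Maurer--Cartan structure equation $d\Omega+\Omega\wedge\Omega=0$ in the distributional sense. The $(1,2)$-entry is the Gauss equation $d\omega^1_2=-\omega^1_3\wedge\omega^2_3$, that is $\gauss\,e^1\wedge e^2 = (LN-M^2)\det g\,dx\wedge dy$ up to normalisation. The $(i,3)$-entries are equivalent to the Codazzi equations. This is a computation valid for smooth data, and it extends by density: $\Omega\wedge\Omega$ is a product of two $L^p$ quantities and lies in $L^{p/2}\subset L^1_\loc$, while the Codazzi equations are linear in $\two$ with $L^p\cdot L^p$ coefficients, so both sides make sense as distributions.

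Next, on the simply connected subdomain $\Omega'$, I would solve $dP = P\,\Omega$ for $P\in W^{1,p}_\loc(\Omega';SO(3))$. For $p>2$ this follows from the Pfaff system theory of Mardare~\cite{mardare2005, mardare2007}: $W^{1,p}\hookrightarrow C^0$ in 2D, so one can mollify, solve, and pass to the limit. The critical case $p=2$ is the main obstacle, because $\Omega\in L^2$ is borderline, $P$ need not be continuous, and the naive compactness argument fails. Here I would follow Litzinger's approach: use Uhlenbeck/Rivi\`ere-type gauge fixing (Coulomb gauge) together with the div-curl (Wente) structure of $\Omega\wedge\Omega$. These give Hardy-space and $L^{2,1}$ control, which yields a $W^{1,2}\cap C^0$ solution and allows the approximation argument to close. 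Uniqueness of $P$ up to a constant rotation would come from checking that $P_1P_2^{-1}$ has zero derivative.

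Finally, I would set $d\iota = P^{\top}$ applied to $(e^1,e^2,0)$; more precisely $\partial_k\iota=\sum_a e^a_k\,P_a$, where $P_a$ are the rows of $P$. The $1$-form $\sum_a e^a P_a$ is closed: the torsion-free structure equations $de^a=-\omega^a_b\wedge e^b$ combine with $dP=P\Omega$ and the symmetry $h_{12}=h_{21}$ to give closedness. By simple connectivity it integrates to $\iota\in W^{2,p}_\loc$. The pullback $\iota^\#\delta=\sum (e^a)^2=g$ holds because $P$ is orthogonal, and the unit normal is $P_3$, whose derivative recovers $\overline{\two}$. Uniqueness modulo Euclidean motions (and null sets) follows from the uniqueness of $P$ up to a rotation and of $\iota$ up to a translation.
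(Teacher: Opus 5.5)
Your proposal is correct and follows essentially the paper's route. The paper gives no proof of Lemma~\ref{lem: surface theory} of its own; it quotes the result from Litzinger~\cite{litzinger2021} (and Li--Su~\cite{lisu}, with Mardare's Pfaff-system theory behind the case $p>2$), and those proofs are exactly the Cartan structure-equation and Pfaff-system argument you sketch, including the Coulomb-gauge/Wente treatment of the critical case $p=2$.

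One caveat on the case $p>2$. You cannot literally mollify $\Omega$ and then solve $dP=P\Omega$, because mollification destroys the nonlinear compatibility condition $d\Omega+\Omega\wedge\Omega=0$. So either cite Mardare's theorem as a black box, or solve an ODE along coordinate lines or rays and show that the vanishing compatibility defect is controlled when you pass to the limit.
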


%Let us explain why the $W^{2,p}_\loc$-isometric immersions are rigid for $p \geq 2$. Given an isometric immersion~$\iota$ in this regularity class, the Gauss--Codazzi equations are well defined in the sense of distributions. Starting from a Gauss--Codazzi weak solution in $L^p_{\loc}$, we may construct an isometric immersion $\iota'$ by  Lemma~\ref{lem: surface theory}. The uniqueness part of this lemma implies that $\iota = \iota'\circ \zeta$ \emph{a.e.} for some Euclidean rigid motion $\zeta \in \R^3 \rtimes  SO(3)$. This is in stark contrast to the ``wild'' or flexible \(C^{1,\alpha}\)-isometric immersions (with \(0 < \alpha \le \alpha_* < 1\) for some \(\alpha_*\)) constructed by convex integration~\cite{delellis2012}, for which curvature cannot be defined even in the distributional sense.

\section{Invariant regions for Riemann invariants}\label{sec: inv}

\subsection{Parabolic regularisation}

The starting point of the analysis for the Gauss--Codazzi system~\eqref{eq:Gauss-Codazzi} is  the 1D isentropic Euler equations for the Chaplygin gas with source term; \emph{i.e.}, the fluid dynamic formulation~\eqref{rho, m equation}, reproduced below: 
\begin{equation*}
    \begin{cases}
        \p_t \rho + \p_x m = -\tg^{2}_{22}\rho - 2\tg^{2}_{12}m - \tg^2_{11} \frac{m^2-1}{\rho},\\
        \p_t m + \p_x\left(\frac{m^2-1}{\rho}\right) = -\tg^1_{22}\rho -2\tg^1_{12}m - \tg^1_{11}\frac{m^2-1}{\rho}.
    \end{cases}
\end{equation*}
We work with the spacetime domain $$\I_T = [0,T]\times\I \subset \R^2$$ where $T>0$ is an arbitrary finite number and $\I \subset \R$ is an interval. We identify local coordinates $(t,x)$ with $(x,y)$.

To obtain solutions to the above hyperbolic PDE system, we consider its \emph{parabolic regularisation} by adding (artificial) viscous terms:
\begin{equation}\label{parabolic regularisation}
    \begin{cases}
        \p_t \rho^\e + \p_x m^\e = \e\p_{xx}\rho^\e -\tg^{2}_{22}\rho^\e - 2\tg^{2}_{12}m^\e - \tg^2_{11} \frac{(m^\e)^2-1}{\rho^\e},\\
        \p_t m^\e + \p_x\left(\frac{(m^\e)^2-1}{\rho^\e}\right) =  \e\p_{xx}m^\e -\tg^1_{22}\rho^\e -2\tg^1_{12}m^\e - \tg^1_{11}\frac{(m^\e)^2-1}{\rho^\e}.
    \end{cases}
\end{equation}
Given any finite $T>0$, the unique smooth solution $(\rho^\e, m^\e)$ to Eq.~\eqref{parabolic regularisation} exists up to time $T$ by parabolic theory. Our goal is to pass to the limits $(\rho^\e, m^\e) \to (\rho, m)$ by sending $\e \to 0^+$ in some suitable topology to obtain a weak solution $(\rho, m)$ to the above system.

For this purpose, we investigate Eq.~\eqref{parabolic regularisation} subject to the constraints: 
\begin{align}\label{rho-eps, m-esp constraints}
    \rho^\e \geq 0,\qquad -1 \leq m^\e \leq 1 \qquad \text{a.e. on } \I_T.
\end{align}
See Eqs.~\eqref{rho geq 0} and \eqref{lambda pm condition}. As discussed in \S\ref{subsec: Chaplygin gas}, it corresponds to the case that two distinct families of contact discontinuity waves propagate at characteristic speeds, \emph{a.k.a.} Riemann invariants: 
\begin{equation*}
    \begin{cases}
        \lambda^\e_{+} \equiv w^\e = \frac{m^\e + 1}{\rho^\e}, \\\lambda^\e_{-} \equiv z^\e = \frac{m^\e - 1}{\rho^\e}. 
    \end{cases}
\end{equation*}
    
We shall establish $\llinf$-bounds for $(w^\e, z^\e)$ uniformly in $\e>0$. This does not necessarily rule out the degenerate scenario $\rho^\e \to +\infty$; however, it will lead to desirable  bounds for the geometric variables $\left\{\left(L^\e, M^\e, N^\e\right)\right\}$ defined via
\begin{equation}\label{def: Le, Me, Ne}
L^\e:=\gamma \rho^\e, \qquad M^\e = -{\gamma}m^\e,\qquad N^\e=\gamma \left(\frac{(m^\e)^2-1}{\rho^\e}\right),
\end{equation}
which allows us to pass to the weak limits $\left(L^\e, M^\e, N^\e\right)\to \left(\bar{L}, \bar{M}, \bar{N}\right)$ and conclude that $\left(\bar{L}, \bar{M}, \bar{N}\right)$ is a weak solution to the Gauss--Codazzi system~\eqref{eq:Gauss-Codazzi} through the compensated compactness framework (see Lemma~\ref{lem: comp comp framework} below).

The $\llinf$-bounds for $(w^\e, z^\e)$ will be derived using a version of maximum principle for the parabolic PDE system in Riemann invariant coordinates. We recast~\eqref{parabolic regularisation} into the PDE system for $(w^\e,z^\e)$ as follows:
\begin{equation}\label{w,z-regularised eq}
    \begin{cases}
        \p_tw^\e + z^\e\p_x w^\e = \e \frac{\p_x\left((\rho^\e)^2\p_xw^\e\right)}{(\rho^\e)^2} + w^\e \A_2[w^\e;z^\e]-\A_1[w^\e;z^\e],\\
        \p_tz^\e + w^\e\p_x z^\e = \e \frac{\p_x\left((\rho^\e)^2\p_xz^\e\right)}{(\rho^\e)^2} + z^\e\A_2[w^\e;z^\e]-\A_1[w^\e;z^\e],
    \end{cases}
\end{equation}
where the source terms are quadratic in $w^\e$ and $z^\e$:
\begin{equation*}
    \A_k[w^\e;z^\e] := \tg^k_{22} + \tg^k_{12}(w^\e+z^\e) + \tg^k_{11}w^\e z^\e,\qquad k \in \{1,2\}.
\end{equation*} 
The symbols $\tg^i_{jk}$ are given in ~\eqref{modified Gamma}. Note that the source terms $w^\e \A_2[w^\e;z^\e]-\A_1[w^\e;z^\e]$ and $z^\e \A_2[w^\e;z^\e]-\A_1[w^\e;z^\e]$ in \eqref{w,z-regularised eq} are cubic polynomials in $(w^\e, z^\e)$.

\subsection{Invariant regions}
 
One key ingredient of our arguments, as in \cite{chen2010a, li, cao2015}, is the method of invariant regions. It may be regarded as the maximum principle for parabolic PDE systems, and we shall make use of the formulation in Proposition~\ref{propn: inv region} below. A more comprehensive treatment can be found, \emph{e.g.}, in Chueh--Conley--Smoller~\cite{chueh1977}. 

Invariant regions have also played a central role in Morawetz's seminal papers~\cite{morawetz1985, morawetz1995, morawetz2004} on the weak solutions to transonic flow problems. These, in turn, motivated the earlier investigations on hyperbolic Gauss--Codazzi equations by Chen--Slemrod--Wang~\cite{chen2010a}.

\begin{proposition}[Theorem 4.4 in \cite{chueh1977}]\label{propn: inv region}
Consider the PDE system for $U: \R\times\mathbb{R}^m  \to \mathbb{R}^n$:
\begin{equation}\label{general system U}
\p_t U = \epsilon \mathcal{D} \cdot \Delta U + \sum_{j=1}^{m} M^j \p_{x_j}U + \mathcal{S}, 
\end{equation}
where $\mathcal{D}$ is a positive definite $n\times n$ matrix, $\{M^1,\ldots,M^m\}$ are $n\times n$ matrices, and $\mathcal{S}:\R\times\mathbb{R}^m \to \mathbb{R}^n$. The region
\[
\mathscr{R} := \bigcap_{i=1}^{d} \left\{ V \in \mathbb{R}^n : \Phi_i(V) \le 0,\; \Phi_i:\operatorname{Image}(U) \to \mathbb{R} \text{ are smooth functions} \right\}
\]
is an invariant region for \eqref{general system U} for all $\epsilon > 0$, namely that $U_0(x) \equiv U(t_0,x) \in \mathscr{R}$ implies $U(t,x) \in \mathscr{R}$ for all $t \geq t_0$ before the lifespan of the solution, if and only if the following conditions hold:
\begin{enumerate}
\item $\nabla_{U_0} \Phi_i$ is a left eigenvector for $\mathcal{D}$ and each $M^j$, $j=1,2,\ldots,m$;
\item For each vector $\eta \in \mathbb{R}^n$, the Hessian matrix satisfies $D_{U_0}^2 \Phi_i(\eta,\eta) \ge 0$ if $\nabla_{U_0} \Phi_i(\eta)=0$;
\item $\nabla_{U_0} \Phi_i \cdot \mathcal{S}(U_0) \le 0$ for each $i\in \{1,\ldots,d\}$ and any $U_0 \in\partial \mathscr{R} \cap \{\Phi_i=0\}$.
\end{enumerate}
\end{proposition}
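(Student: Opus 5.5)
This is Theorem~4.4 of Chueh--Conley--Smoller~\cite{chueh1977}, so in the paper it may simply be quoted. If I were to prove it, I would treat the two directions separately and work one constraint $\Phi_i$ at a time, since each condition concerns only one face $\{\Phi_i=0\}$ of $\partial\mathscr{R}$.

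\emph{Sufficiency.} I would first replace $\mathcal{S}$ by the strictly inward source $\mathcal{S}_\delta:=\mathcal{S}-\delta\sum_i\nabla\Phi_i$. Under condition~(3) this makes $\nabla\Phi_i\cdot\mathcal{S}_\delta<0$ on the relevant face, at least near the boundary, where $\nabla\Phi_i\neq 0$. I would then show that the region is \emph{strictly} invariant for the perturbed system. Suppose not, and let $t_1>t_0$ be the first time at which $\Phi_i(U_\delta(t_1,x_1))=0$ for some $i$ and $x_1$. The existence of this attained maximum needs compactness in $x$. I would obtain it by assuming $U_0$ is bounded and constant outside a compact set, or by adding a small barrier $\eta|x|^2$ and letting $\eta\to0$.

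At the touching point $x\mapsto\Phi_i(U_\delta(t_1,x))$ attains a spatial maximum, and I would read off three facts.
\begin{itemize}
\item The gradient vanishes: $\nabla\Phi_i\cdot\p_{x_j}U=0$ for every $j$. By condition~(1), $\nabla\Phi_i M^j=\mu_j\nabla\Phi_i$, so the transport terms contribute $\mu_j\nabla\Phi_i\cdot\p_{x_j}U=0$.
\item The Laplacian is nonpositive. Expanding, $\Delta\Phi_i(U)=\nabla\Phi_i\cdot\Delta U+\sum_j D^2\Phi_i(\p_{x_j}U,\p_{x_j}U)\le 0$.
\item Each $\p_{x_j}U$ lies in $\ker\nabla\Phi_i$ by the first fact, so condition~(2) makes the Hessian terms nonnegative. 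Hence $\nabla\Phi_i\cdot\Delta U\le0$, and since $\nabla\Phi_i\mathcal{D}=d\,\nabla\Phi_i$ with $d>0$, the diffusion term is $\le0$.
\end{itemize}
Therefore $\p_t\Phi_i(U_\delta)(t_1,x_1)\le\nabla\Phi_i\cdot\mathcal{S}_\delta<0$. This contradicts $t_1$ being the first time the constraint $\Phi_i\le0$ becomes active from the interior. Finally I would let $\delta\to0$, using continuous dependence of smooth parabolic solutions on the source term on compact time intervals, and conclude that $U$ stays in the closed region $\mathscr{R}$.

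\emph{Necessity.} I would build local initial data at a boundary point $U_0\in\partial\mathscr{R}\cap\{\Phi_i=0\}$ that leaves $\mathscr{R}$ instantly whenever a condition fails.
\begin{itemize}
\item If condition~(3) fails, constant data $U\equiv U_0$ reduces the system to the ODE $U'=\mathcal{S}(U)$, which exits immediately.
\item If condition~(1) fails, there is a vector $\eta$ with $\nabla\Phi_i\cdot\eta=0$ but $\nabla\Phi_i M^j\eta\neq0$, or $\nabla\Phi_i\mathcal{D}\eta\neq0$. I would take $U(t_0,x)\approx U_0\pm\kappa x_j\eta$ plus a small correction keeping the data inside $\mathscr{R}$, for instance a quadratic term in $x$ directed along $-\nabla\Phi_i$. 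A large slope $\kappa$ then makes $\p_t\Phi_i$ positive at $x=0$. In the diffusion case, a profile $U_0+x_j^2\eta$ does the same.
\item If condition~(2) fails, take $\eta$ in the kernel with $D^2\Phi_i(\eta,\eta)<0$ and data $U_0+x_1\eta-cx_1^2\nabla\Phi_i^{\top}$ with $c$ small. The diffusion term then yields $\p_t\Phi_i>0$ at the origin.
\end{itemize}

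The main obstacle I expect is justifying the first-touching-time argument rigorously. The difficulties are the unbounded spatial domain, possible degeneracy where $\nabla\Phi_i=0$ or where several faces meet at corners, and the fact that $\Phi_i$ is only defined on $\operatorname{Image}(U)$. I would handle these by the $\delta$-perturbation together with the spatial barrier. For the corners I would observe that each $\Phi_i$ can be treated independently, since the argument only uses the single constraint that first becomes active. In our application the regions in $(w,z)$ are rectangles, so the $\Phi_i$ are linear, condition~(2) is trivial, and condition~(1) holds because \eqref{w,z-regularised eq} is diagonal in the Riemann invariants.
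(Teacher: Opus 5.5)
\textbf{Context.} The paper does not prove this proposition. It quotes Theorem~4.4 of Chueh--Conley--Smoller and uses only the sufficiency half, with linear $\Phi_i$ and diagonal $\mathcal{D}$, $M^1$. Your outline follows the route of that reference, and your contact-point computation is correct. Condition~(1) removes the transport terms, condition~(2) gives $\nabla\Phi_i\cdot\Delta U\le 0$, and the left eigenvalue $d$ is positive because $\nabla\Phi_i\,\mathcal{D}\,\nabla\Phi_i^{\top}>0$.

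\textbf{Sufficiency: the strictness device fails.} On $\{\Phi_i=0\}$ you get $\nabla\Phi_i\cdot\mathcal{S}_\delta=\nabla\Phi_i\cdot\mathcal{S}-\delta|\nabla\Phi_i|^2-\delta\sum_{k\neq i}\nabla\Phi_i\cdot\nabla\Phi_k$. The sum runs over all constraints, including faces far away, so the cross terms can cancel or outweigh $|\nabla\Phi_i|^2$. For the square of Lemma~\ref{lem: Riem inv}, $\Phi\in\{-w,\;w-Q,\;z,\;-z-Q\}$, the four gradients sum to zero, so $\mathcal{S}_\delta\equiv\mathcal{S}$ and nothing is gained. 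You need a perturbation that is inward face by face. For convex $\Phi_i$, take $\mathcal{S}+\delta(V^*-V)$ with $V^*$ an interior point; convexity then gives $\nabla\Phi_i(V)\cdot(V^*-V)\le\Phi_i(V^*)<0$ on $\{\Phi_i=0\}$. In general, localise the perturbation near each face, with some care at corners.

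\textbf{Necessity: the test data are wrong.} For~(2), your datum has the wrong sign and size. Write $\ell=\nabla\Phi_i(U_0)$ and $U(t_0,x)=U_0+x_1\eta-cx_1^2\ell^{\top}$. The linear part contributes nothing to $\Delta U(t_0,0)=-2c\,\ell^{\top}$. So when (1) and (3) hold, $\partial_t\Phi_i(U)(t_0,0)=-2c\,\epsilon d|\ell|^2+\ell\cdot\mathcal{S}(U_0)$. This is $\le 0$ for every $c\ge 0$, and for small $|c|$ it cannot beat a strictly negative $\ell\cdot\mathcal{S}(U_0)$.

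The failure of quasi-convexity must instead be used to bend the datum \emph{outward}. Since $\Phi_i(U(t_0,x))=x_1^2\bigl(\tfrac12D^2\Phi_i(\eta,\eta)-c|\ell|^2\bigr)+O(|x_1|^3)$, any $c\in\bigl(D^2\Phi_i(\eta,\eta)/(2|\ell|^2),\,0\bigr)$ keeps the datum in $\mathscr{R}$ near $0$. Rescaling $\eta\mapsto\lambda\eta$ enlarges this range by a factor $\lambda^2$. For $\lambda$ large, $2|c|\epsilon d|\ell|^2$ then exceeds $|\ell\cdot\mathcal{S}(U_0)|$.

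The transport case of~(1) has a related flaw. If $D^2\Phi_i(\eta,\eta)>0$, the inward correction that keeps $U_0+\kappa x_j\eta$ in $\mathscr{R}$ needs $c\gtrsim\kappa^2$. Its diffusive contribution $-2c\,\epsilon\,\ell\mathcal{D}\ell^{\top}$ then defeats the gain $\kappa\,\ell M^j\eta$ as $\kappa\to\infty$, so a large slope alone does not work. You must fix $\kappa$ and then take $\epsilon$ small. This is exactly where the quantifier ``for all $\epsilon>0$'' is needed, and it cannot be dropped. Take $n=2$, $\mathcal{D}=I$, $M$ a nonzero antisymmetric matrix and $\mathcal{S}=-KU$. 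The unit disc violates~(1) at every boundary point. Yet $|U|^2$ is a subsolution of $\partial_t-\epsilon\partial_{xx}$ once $4K\epsilon\ge 1$, so the disc is invariant for those $\epsilon$.
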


Instead of using the invariant regions in the flow speed-phase angle plane investigated in Chen--Slemrod--Wang~\cite{chen2010a} (which is motivated by Morawetz~\cite{morawetz1985, morawetz1995, morawetz2004}) or the fixed square-shaped invariant regions studied in Cao--Huang--Wang \cite{cao2015} and the follow-up work \cite{li}, we consider here a \emph{time-dependent} invariant region $\mathscr{R}(t)$, motivated by the following observations:
\begin{enumerate}
    \item 
As discussed in the Introduction, we expect that the two families of contact discontinuities for the $(\rho, m)$ equation~\eqref{rho, m equation} remain non-interacting with each other. By computing the Riemann invariants --- see \eqref{eigenvalues}, \eqref{lambda pm condition} --- we find that this is equivalent to $w \geq 0$ and $z \leq 0$. Hence, we look for invariant regions in the second quadrant of the $(z,w)$-plane. 

\item 
The source term $\mathcal{S}$ in \eqref{source terms S} is a \emph{cubic} polynomial in $(w,z)$ with coefficients $\tg^i_{jk}$. Previous work \cite{cao2015, li} studied the zero loci of these cubics further transforming to the ``velocity variables'' $\left(u = \frac{w+z}{2}, v=\frac{w-z}{2}\right)$. In contrast, by introducing time-dependent $Q=Q(t)$, we allow the conditions for the invariant regions to be automatically satisfied on the boundaries $\{w=Q(t)\}$ and $\{z=-Q(t)\}$, thus bypassing the difficulties of checking the signs of cubic polynomials.  
\end{enumerate}

\begin{lemma}\label{lem: Riem inv}
Assume that $\widetilde\Gamma^1_{22}=0$ and $\widetilde\Gamma^1_{12}\geq0$ \emph{a.e.} on $\I_T$. Let $Q:[0,T_\star[\to\R$ be the maximal positive solution to the ODE 
\begin{equation}
Q'(t)
=
3K_T
\left(
Q(t)+Q(t)^2+Q(t)^3
\right),
\qquad
Q(0)=Q_0>0,
\label{eq:Q-moving-region}
\end{equation}
where $K_T
:=
\max_{i,j,k\in\{1,2\}}
\left\|
\widetilde\Gamma^i_{jk}
\right\|_{L^\infty(\I_T)}$ and $T_\star$ is the minimum of $T$ and the lifespan of $Q$. Then, for any $t \in [0,T_\star[$, the time-dependent square
\begin{equation*}
\mathcal R(t)
:=
\left\{
(w,z)\in\mathbb R^2:
0\leq w\leq Q(t),\;
-Q(t)\leq z\leq0
\right\}
\end{equation*}
is an invariant region for  the parabolic system~\eqref{w,z-regularised eq}. That is, if $\bigl(w^\e(0,x),z^\e(0,x)\bigr) \in\mathcal R(0)$ for every $x\in\I$, and if the boundary data (when present) take values in
$\mathcal R(t)$, then the solution $\bigl(w^\e(t,x),z^\e(t,x)\bigr)
\in\mathcal R(t)$ for every $t\in [0,T_\star[$. 
\end{lemma}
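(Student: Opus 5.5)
The plan is to run a direct maximum-principle (first-touching) argument on the four faces of the moving square $\mathcal R(t)$, rather than invoking Proposition~\ref{propn: inv region} verbatim, because that proposition concerns time-independent regions. The structural fact that makes this work is that \eqref{w,z-regularised eq} is \emph{diagonal} in its principal part. The $w^\e$-equation involves only $\p_t w^\e$, $z^\e\p_x w^\e$ and
\[
\e\p_{xx}w^\e+2\e\,\frac{\p_x\rho^\e}{\rho^\e}\,\p_x w^\e,
\]
and symmetrically for $z^\e$. So each of the functionals $\Phi_1=w-Q(t)$, $\Phi_2=-w$, $\Phi_3=z$ and $\Phi_4=-z-Q(t)$ has gradient a left eigenvector of the diffusion and convection matrices, and the $\Phi_i$ are linear, so condition (2) is trivial. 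Only the source condition (3) has to be checked, with $Q'$ included for the two moving faces. Throughout I assume the smooth solution has $\rho^\e>0$, which holds as long as $w^\e-z^\e=2/\rho^\e>0$ stays finite and positive, so the coefficients $\p_x\rho^\e/\rho^\e$ are locally bounded.

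\textbf{Source signs on the four faces.} Write $S_w=w\A_2-\A_1$ and $S_z=z\A_2-\A_1$. Using $\tg^1_{22}=0$, we have $\A_1=\tg^1_{12}(w+z)+\tg^1_{11}wz$.
\begin{itemize}
\item On $\{w=0\}$ we get $S_w=-\A_1=-\tg^1_{12}z\ge 0$, since $z\le 0$ and $\tg^1_{12}\ge0$. Hence $\nabla\Phi_2\cdot S=-S_w\le 0$.
\item On $\{z=0\}$ we get $S_z=-\tg^1_{12}w\le 0$, since $w\ge0$.
\item On $\{w=Q\}$ with $-Q\le z\le 0$ we have $|w+z|\le Q$ and $|wz|\le Q^2$. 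This gives $|\A_1|\le K_T(Q+Q^2)$ and $|w\A_2|\le K_TQ(1+Q+Q^2)$, so
\[
S_w\le K_T\bigl(2Q+2Q^2+Q^3\bigr)\le 3K_T(Q+Q^2+Q^3)=Q'(t).
\]
Therefore $\p_t\Phi_1$ along the flow is at most $S_w-Q'\le0$.
\item On $\{z=-Q\}$ the same bounds give $S_z\ge -Q'$.
\end{itemize}
The factor $3$ in \eqref{eq:Q-moving-region} is there precisely to absorb these crude bounds.

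\textbf{Making the argument rigorous.} The inequalities above are non-strict, so I would argue by perturbation. Fix $\delta>0$ and let $Q_\delta$ solve
\[
Q_\delta'=3K_T(Q_\delta+Q_\delta^2+Q_\delta^3)+\delta,\qquad Q_\delta(0)=Q_0+\delta.
\]
For the faces $w=0$ and $z=0$, compare instead with $-\delta\,e^{Ct}$, or shift the square slightly. Suppose $(w^\e,z^\e)$ first leaves the enlarged square at some time $t_1$ and point $x_1$. If $x_1$ is at the boundary of $\I$, this contradicts the hypothesis on the boundary data. If $x_1$ is interior, then there $\p_x w^\e=0$, $\p_{xx}w^\e\le0$ and $\p_t(w^\e-Q_\delta)\ge0$. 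Evaluating the $w^\e$-equation at that point gives $\p_t w^\e\le S_w<Q_\delta'$, a contradiction; the other faces are analogous.

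The faces $w=0$ and $z=0$ need a little care. There $S_w$ must be evaluated at points with $w$ slightly negative. Local Lipschitz continuity of the cubic source, together with the $\delta e^{Ct}$ margin for $C$ large (depending on $K_T$ and $Q$), handles this. On an unbounded $\I$, one would additionally need decay or boundedness at spatial infinity, for instance via a weighted barrier; I would state that as an assumption.

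Letting $\delta\to0$ and using continuous dependence of the ODE solution gives the result on $[0,T_\star[$. \textbf{I expect the main obstacle to be} handling the degenerate faces $w=0$ and $z=0$, where the source is only non-strictly signed and vanishes at the corner, together with ensuring $\rho^\e$ stays positive so that the drift coefficient $\p_x\rho^\e/\rho^\e$ in the parabolic operator remains bounded up to $t_1$. I would address the latter by noting that staying inside the square keeps $w^\e-z^\e\le 2Q$ bounded, and that smoothness of the solution up to $T$ keeps $\rho^\e$ finite.
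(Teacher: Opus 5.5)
Your proposal is correct and agrees in substance with the paper's proof. The paper checks the same four face conditions: the signs $-\widetilde\Gamma^1_{12}z\geq 0$ on $\{w=0\}$ and $-\widetilde\Gamma^1_{12}w\leq 0$ on $\{z=0\}$, both using $\widetilde\Gamma^1_{22}=0$, and the bound $|\mathcal S^{(i)}|\leq Q'(t)$ on the two moving faces.

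The difference is only in the closing step. The paper rescales to $W=w/Q$, $Z=z/Q$ on the fixed square $[0,1]\times[-1,0]$ and invokes the Chueh--Conley--Smoller theorem (Proposition~\ref{propn: inv region}). You instead prove invariance directly with strict barriers $Q_\delta$ and $\pm\delta e^{Ct}$. That makes explicit three things the citation leaves implicit: the non-strict inequalities, the gradient-dependent drift $2\e\,\p_x\rho^\e/\rho^\e$, and the behaviour at spatial infinity.
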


\begin{proof}[Proof of Lemma~\ref{lem: Riem inv}]

By setting $W^\e(t,x)
:= \frac{w^\e(t,x)}{Q(t)}$ and $Z^\e(t,x):= \frac{z^\e(t,x)}{Q(t)}$, we reduce the problem to showing that the fixed square $[0,1]\times[-1,0]$ in the $(W^\e, Z^\e)$-plane is invariant under the dynamics of \eqref{w,z-regularised eq}. To this end, we check the three conditions of Proposition~\ref{propn: inv region}. As $\Phi_i$ are linear functions, so (2) is automatically satisfied. Also, the outward normals to the four sides of $\mathcal{R}(t)$ are coordinate vectors, and both $\mathcal{D}$ and $M^1$ in \eqref{general system U} are diagonal matrices in this case. Hence (1) is satisfied.

For ease of reference, we denote the source term as follows:
\begin{equation}\label{source terms S}
    \mathcal{S}(w,z) = \begin{bmatrix}
     \mathcal{S}^{(1)}(w,z)\\
     \mathcal{S}^{(2)}(w,z)
    \end{bmatrix} = \begin{bmatrix}
     w\A_2[w;z]-\A_1[w;z]\\
     z\A_2[w;z]-\A_1[w;z]
    \end{bmatrix}.
\end{equation}
Let us \emph{claim} that 
\begin{equation}
\max\left\{\left|\mathcal{S}^{(1)}(w,z)\right|,\,
\left|\mathcal{S}^{(2)}(w,z)\right|\right\}\leq Q'(t)\qquad\text{for } (w,z) \in \mathcal{R}(t).
\label{eq:source-controlled-by-Q}
\end{equation}
To see this, note that $
0\leq w\leq Q(t)$ and $
-Q(t)\leq z\leq 0$ for $(w,z) \in \mathcal{R}(t)$. Then $|w+z|\leq2Q(t)$ and $
|wz|\leq Q(t)^2$, and hence 
\[
|\A_2[w,z]|
\leq
K_T
\left(
1+2Q(t)+Q(t)^2
\right).
\]
Since by assumption $\widetilde\Gamma^1_{22}=0$, we also have
\[
|\A_1[w,z]|
\leq
K_T
\left(
2Q(t)+Q(t)^2
\right).
\] 
The \emph{claim}~\eqref{eq:source-controlled-by-Q} now follows from the definition of $\mathcal{S}^{(i)}$, $i \in \{1,2\}$ in \eqref{source terms S} and the ODE~\eqref{eq:Q-moving-region}.

The sign condition (3) in Proposition~\ref{propn: inv region} can be checked as follows:
\begin{itemize}
    \item
On the boundary $\{w=0\}$, using $\widetilde\Gamma^1_{22}=0$, we have $\A_1[0,z] = \widetilde\Gamma^1_{12}z,
$ and hence
$\mathcal{S}^{(1)}(0,z)
=-\A_1[0,z]=
-\widetilde\Gamma^1_{12}z
\geq0$ whenever $z\leq0$.

    \item 
Similarly, on the boundary $\{z=0\}$, we have $\A_1[w,0] =
\widetilde\Gamma^1_{12}w$,
so that $\mathcal{S}^{(2)}(w,0) = -\A_1[w,0] =
-\widetilde\Gamma^1_{12}w
\leq0 $ whenever $w\geq0$.

\item 
On the moving boundary $\{w=Q(t)\}$, denote the corresponding $\Phi_i$ function as 
\[
\Phi_+(t,w,z)
:=
w-Q(t).
\]
We deduce from~\eqref{eq:source-controlled-by-Q} that
\[
\partial_t\Phi_+
+
\nabla_{w,z}\Phi_+\cdot \mathcal{S}
=
-Q'(t)+\mathcal{S}^{(1)}(Q(t),z)
\leq0.
\]

\item 
Finally, on the moving boundary $\{z=-Q(t)\}$, set 
\[
\Phi_-(t,w,z)
:=
-z-Q(t).
\]
Then, again by \eqref{eq:source-controlled-by-Q}, one has
\[
\partial_t\Phi_-
+
\nabla_{w,z}\Phi_-\cdot \mathcal{S}
=
-Q'(t)-\mathcal{S}^{(2)}(w,-Q(t))
\leq0.
\]
\end{itemize}
This verifies (3) and hence proves Proposition~\ref{propn: inv region}.   \end{proof}

When specialising to diagonal metrics $g=E(x,y)dx^2+G(x,y)dy^2$, we have $\tg^1_{22}=-\frac{\p_xG}{2E}$, which equals zero if and only if $G=G(y)$. On the other hand, $\tg^1_{12} = \frac{1}{2}\p_y\left[\log (E\gamma)\right]$.  

%The lifespan $T_\star$ can be taken arbitrarily large if $Q(0)$ is arbitrarily small. Indeed, the ODE~\eqref{eq:Q-moving-region} becomes $Q_+' \leq CQ_{+}^3$ for $Q_+(t):=1+Q(t)$ and $K_T'$ is a universal constant times $K_T$. Then $Q_+(t) \leq \frac{Q_+(0)}{\sqrt{1-2K_T' Q_+^2(0) t}}$.

\begin{remark}\label{remark: initial data}
The conditions on invariant regions in Lemma~\ref{lem: Riem inv} entail the following for the initial data $\left(L_0, M_0, N_0\right):=\left(L,M,N\right)\big|_{\{y=0\}}$:
\begin{align*}
L_0 > 0,\quad \left|M_0\right| \leq \gamma,\quad 1 \pm \frac{M_0}{\gamma} \leq Q\frac{L_0}{\gamma}.
\end{align*}

Meanwhile, note from the ODE~\eqref{eq:Q-moving-region} that the lifespan
\begin{align*}
    T_\star \gtrsim \log\left(\frac{1}{Q_0}\right)
\end{align*}
modulo a constant depending only on $K_T$, provided that $T$ (in $\I_T=\I\times[0,T]$) is  larger than the right-hand side. In particular, if the initial datum $Q_0$ can be chosen arbitrarily small, then the lifespan is arbitrarily large.
\end{remark}

\section{Uniform estimates via entropy analysis}\label{sec: entropy}

In this section, we shall establish the uniform estimate for $\{L^\e\}$ in the space $L^\infty_t L^p_x$ for $p \in [1,\infty[$ on arbitrarily large rectangular domains, while retaining the possibility for $L^\e$ to become unbounded. For $p \geq 2$, we also establish the uniform derivative estimate $\left\{\sqrt{\e}(\p_x L^\e), \sqrt{\e}(\p_x M^\e)\right\}$ in $(L^2_tL^2_x)_\loc$. Here, $L^\e$ and $M^\e$ are approximate solutions to the Gauss--Codazzi equations, defined through the fluid variables $(\rho^\e, m^\e)$. These bounds will play a crucial role in the construction of weak solutions to the Gauss--Codazzi system~\eqref{eq:Gauss-Codazzi} via the vanishing viscosity method in the subsequent section.

To this end, we exploit the fluid dynamical formulation of the Gauss--Codazzi equations and derive estimates for the fluid variables $(\rho^\e, m^\e)$. This is done in two steps: %we first bound for $\rho^\e$ by entropy considerations via Riemann invariants, and then estimate the first derivatives of $\rho^\e$ and $m^\e$ by energy estimates. 

\begin{itemize}
    \item   
The $L^\infty_t L^p_x$-bounds for $\{\rho^\e\}$ are established via an entropy analysis applied to~\eqref{w,z-regularised eq} for the regularised Riemann invariant coordinates $(w^\e, z^\e)$, reproduced below:
\begin{equation}\label{w,z equation; entropy}
    \begin{cases}
        \p_tw^\e + z^\e\p_x w^\e = \e \frac{\p_x\left((\rho^\e)^2\p_xw^\e\right)}{(\rho^\e)^2} + w^\e \A_2[w^\e;z^\e]-\A_1[w^\e;z^\e],\\
        \p_tz^\e + w^\e\p_x z^\e = \e \frac{\p_x\left((\rho^\e)^2\p_xz^\e\right)}{(\rho^\e)^2} + z^\e\A_2[w^\e;z^\e]-\A_1[w^\e;z^\e].
    \end{cases}
\end{equation}
Also recall the source terms:
\begin{equation*}
    \A_k[w^\e;z^\e] := \tg^k_{22} + \tg^k_{12}(w^\e+z^\e) + \tg^k_{11}w^\e z^\e,\qquad k \in \{1,2\}.
\end{equation*} 

\item 
The uniform $L^2_tL^2_x$-bounds for $\left\{\sqrt{\e}(\p_x \rho^\e), \sqrt{\e}(\p_x m^\e)\right\}$ will be further  deduced from energy estimates for the $\left(\rho^\e, m^\e\right)$ equation~\eqref{parabolic regularisation}, reproduced below:
\begin{equation}\label{rho, m eq, new}
    \begin{cases}
        \p_t \rho^\e + \p_x m^\e = \e\p_{xx}\rho^\e -\tg^{2}_{22}\rho^\e - 2\tg^{2}_{12}m^\e - \tg^2_{11} \frac{(m^\e)^2-1}{\rho^\e},\\
        \p_t m^\e + \p_x\left(\frac{(m^\e)^2-1}{\rho^\e}\right) =  \e\p_{xx}m^\e -\tg^1_{22}\rho^\e -2\tg^1_{12}m^\e - \tg^1_{11}\frac{(m^\e)^2-1}{\rho^\e}.
    \end{cases}
\end{equation}
\end{itemize}

Before proceeding to the proof,  recall that the Riemann invariants $(w^\e,z^\e)$ are related to the fluid variables $(\rho^\e, m^\e)$ by
\begin{align*}
    w^\e = \frac{m^\e+1}{\rho^\e},\qquad z^\e =\frac{m^\e-1}{\rho^\e}.
\end{align*}
In turn, the fluid variables $(\rho^\e, m^\e)$ and the geometric variables $(L^\e,M^\e,N^\e)$ are related by
\begin{align*}
    \rho^\e = \frac{L^\e}{\gamma}, \qquad m^\e = -\frac{M^\e}{\gamma},\qquad n^\e:=\frac{\left(m^\e\right)^2-1}{\rho} = \frac{N^\e}{\gamma},
\end{align*}
where $\gamma = \sqrt{-\gauss}$. In the next section, we shall show that the weak limits as $\e \to 0$ of $(L^\e,M^\e,N^\e)$ are the weak solutions to the Gauss--Codazzi system~\eqref{eq:Gauss-Codazzi}.

\begin{proposition}\label{propn: entropy analysis}
    Consider the PDE~\eqref{w,z equation; entropy} for the Riemann invariants $(w^\e,z^\e)$ and the PDE~\eqref{rho, m eq, new} for the fluid variables $(\rho^\e, m^\e)$ over the spacetime domain $\I_T := [0,T] \times \I$, where $\I\subset \R$ is an interval, subject to the boundary condition (or the far-field condition if $\I$ is unbounded)
\begin{equation}\label{bc for rho, m}
   \left[\p_x \rho^\e(t,\bullet) ,  m^\e(t,\bullet) \right]^\top\Big|_{\p\I}=[0,0]^\top\qquad \text{for a.e. } t \in [0,T].
\end{equation}

\begin{enumerate}
    \item 
Fix $p \in [1,\infty[$. Assume that the initial entropy  is integrable on $\I$:
$$\rho^\e\big((w^\e)^{1-p} + (-z^\e)^{1-p}\big)\Big|_{t=0} \in L^1(\I),$$ and $0 \leq w^\e \leq Q$, $-Q \leq z^\e \leq 0$ for some $Q$ independent of $\e$. Then $\left\|\rho^\e\right\|_{L^\infty_t L^p_x(\I_T)}$ is bounded by a constant depending only on $p$, $T$, $Q$, and the $W^{3,\infty}$-norm of the metric $g$.
\item 
Fix $p=2$ in (1) above. Suppose in addition that  $\tg^{1}_{22}=0$,  $\tg^1_{12}\geq 0$, as well as $0\leq w^\epsilon\leq Q$ and $-Q\leq z^\epsilon\leq 0$ \emph{a.e.} on $\I_T$ for some constant $Q>0$ independent of $\e$. Then $\left\{\e \left(\p_x\rho^\e\right)^2 \right\}$ and $\left\{\e \left(\p_x m^\e\right)^2 \right\}$ are bounded in $L^1(\I_T)$ by a constant depending only on $p$, $T$, $Q$, and the $W^{3,\infty}$-norm of the metric $g$.  
\end{enumerate}
\end{proposition}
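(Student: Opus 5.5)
Part (1) will follow from an entropy inequality for the singular entropy
\[
\eta_p(\rho,m) := \rho\bigl(w^{1-p} + (-z)^{1-p}\bigr) = \rho^p\bigl[(1+m)^{1-p}+(1-m)^{1-p}\bigr],
\]
which is exactly the quantity in the initial hypothesis and, without the linear correction, the entropy $H_p$ of Theorem~\ref{thm:global-relative-entropy}. The plan has four steps.

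\textbf{Step 1: $\eta_p$ controls $\rho^p$.} Since $0\le w\le Q$ and $-Q\le z\le 0$, we have $w^{1-p}\ge Q^{1-p}$ for $p\ge1$. Hence
\[
\rho^p\cdot 2^{1-p} \le \rho^p(1+m)^{1-p} = \rho\, w^{1-p},
\]
because $0\le 1+m\le 2$. So $\rho^p\lesssim_p \eta_p$. Bounding $\sup_t\int_\I \eta_p(\rho^\e,m^\e)\,\dd x$ therefore gives the $L^\infty_tL^p_x$ bound.

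\textbf{Step 2: $\eta_p$ is an entropy of the homogeneous Chaplygin system.} Any function of the form $\rho f(w)+\rho g(z)$ is an entropy for~\eqref{homogeneous eq, chaplygin}, with fluxes $m f(w)+\cdots$. I would verify this directly in Riemann-invariant form: the transport in~\eqref{w,z equation; entropy} pairs $w$ with speed $z$, and $\rho$ itself satisfies $\p_t\rho+\p_x m=\dots$. From~\eqref{w,z equation; entropy} together with the $\rho$-equation of~\eqref{rho, m eq, new}, I will compute
\[
\p_t\eta_p+\p_x q_p = \e\,\p_{xx}\eta_p - \e\,(\text{dissipation}) + (\text{source}).
\]
The dissipation is $\e\, D^2\eta_p(\p_xU,\p_xU)$. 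It is nonnegative since $\eta_p$ is convex in $(\rho,m)$ on $\{\rho>0,\ |m|<1\}$: the map $\rho^p h(m)$ with $h(m)=(1\pm m)^{1-p}$ is a perspective-type convex function, which I will check via the Hessian determinant. Integrating over $\I$, the boundary conditions~\eqref{bc for rho, m} kill the flux term $q_p$ (which is proportional to $m$-type quantities, zero at $\p\I$) and the viscous boundary term. The latter needs $\p_x\eta_p=\p_\rho\eta_p\,\p_x\rho+\p_m\eta_p\,\p_x m$; the second piece is not obviously zero, so I may need to use $m|_{\p\I}=0$ together with parity, or restrict to even/periodic reflections. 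I flag this as a technical point.

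\textbf{Step 3: source estimate and Gr\"onwall.} The source is $\p_\rho\eta_p\,S_1+\p_m\eta_p\,S_2$, with $S$ as in~\eqref{rho, m eq, new}, and I will write it in $(\rho,w,z)$ variables. The terms come from $\rho w^{-p}\times\bigl(w\A_2-\A_1\bigr)$ and from the $\rho$-source $\rho\times(\text{bounded coefficients in } w,z)$. The dangerous pieces are $\rho w^{-p}\A_1$ as $w\to0$. Here $\A_1[0;z]=\tg^1_{22}+\tg^1_{12}z$, which is not small in general, so in part (1) I would use only the boundedness of $w,z$. Since $\A_1$ is at most linear in $w$ plus a constant, I get
\[
|\text{source}|\le C(K,Q)\,\rho\bigl(w^{-p}+(-z)^{-p}+w^{1-p}+\dots\bigr).
\]
This is a genuine obstacle, since $w^{-p}$ is not controlled by $w^{1-p}$. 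The remedy I will try first is to choose the exponent so that the worst term has sign: $\p_w(w^{1-p})=(1-p)w^{-p}\le0$ multiplies $-\A_1$. When $\tg^1_{22}=0$ and $\tg^1_{12}\ge0$, this contribution is $\le0$ near $w=0$, mirroring the boundary checks in Lemma~\ref{lem: Riem inv}; the $z$ side is symmetric. The remaining terms are bounded by $C\eta_p+C\rho$. The term $\int\rho$ is bounded by $\int\eta_p$ plus $|\I|$ on a bounded interval. Gr\"onwall then gives Step 1's bound depending on $p,T,Q$ and $\|g\|_{W^{3,\infty}}$, the last entering through $K_T$ and $\gamma$. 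If the hypotheses of (1) alone do not yield the sign, I would absorb the $w^{-p}$ term using the dissipation or fall back on the structural assumption.

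\textbf{Step 4: part (2).} For $p=2$, keep the dissipation term $\e\iint D^2\eta_2(\p_xU,\p_xU)$ from the integrated identity; it is bounded by Step 3. Then bound the Hessian below. Since $\eta_2=\rho^2\bigl[(1+m)^{-1}+(1-m)^{-1}\bigr]$ and $\rho\ge 2/Q$ (from $w-z=2/\rho\le 2Q$), the Hessian on the invariant region dominates $c(Q)\,\mathrm{Id}$ up to the degenerate direction. I expect this lower bound to be the main computational step. If uniform positive definiteness fails, I would add the standard energy $\tfrac12(\rho^2+m^2)$ estimate from~\eqref{rho, m eq, new}: multiply by $\rho^\e$ and $m^\e$, integrate by parts using~\eqref{bc for rho, m}, control the flux term $\p_x(n)m$ by Young's inequality with the uniform $L^2$ bound of $\rho$ and the boundedness of $m$ and $n=wz\rho$, and conclude with Gr\"onwall.
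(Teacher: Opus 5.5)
Your plan is essentially the paper's proof: the same singular entropy $\eta_p=\rho\bigl(w^{1-p}+(-z)^{1-p}\bigr)$ with its viscous balance law, boundary terms removed via~\eqref{bc for rho, m}, the source bounded by $C\eta_p$ using $\tg^1_{22}=0$ and the sign of $\tg^1_{12}$ followed by Gr\"onwall, and, for $p=2$, the gradient bound from uniform convexity of $\eta_2$ on $\{\rho\geq 1/Q,\ |m|<1\}$. Each of your hedges resolves in favour of your primary route: the paper's proof of (1) also invokes $\tg^1_{12}\geq0$ (and tacitly $\tg^1_{22}=0$) although (1) does not list them, so falling back on the structural assumption is the right call, and absorbing $\rho w^{-p}$ into the $\e$-dissipation cannot work because that dissipation vanishes on spatially homogeneous states; the boundary term $\rho^p h_p'(m)\,\p_x m$ vanishes on $\p\I$ because $h_p(m)=(1+m)^{1-p}+(1-m)^{1-p}$ is even and $m=0$ there; and $\lambda_{\min}(D^2\eta_2)\geq \det D^2\eta_2/\operatorname{tr}D^2\eta_2\geq 4\rho_*^2/(1+4\rho_*^2)$ with $\rho_*=1/Q$ (not $2/Q$), so there is no degenerate direction, and your $\tfrac12(\rho^2+m^2)$ fallback is never needed (it would not close uniformly in $\e$ anyway, since that functional is not an entropy).
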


    The boundary conditions~\eqref{bc for rho, m} are equivalent to 
    \begin{equation*}
           \left[\p_x\left(\frac{L^\e}{\gamma}\right)(t,\bullet) , \,M^\e(t,\bullet) \right]^\top\Big|_{\p\I}=[0,0]^\top\qquad \text{for a.e. } t \in [0,T].
    \end{equation*}
    The assumption $\rho^\e\big((w^\e)^{1-p} + (-z^\e)^{1-p}\big)\big|_{\{t=0\} }\in L^1(\I)$ allows concentration for the initial density, that is,  $\rho^\e(0,x)=\infty$ for some $x \in \I$, and/or the degeneracy of the Riemann invariants, \emph{i.e.}, $w^\e(0,x)=0$ or $z^\e(0,x)=0$ for some $x \in \I$. The pointwise boundedness assumptions  $0 \leq w^\e \leq Q$, $-Q \leq z^\e \leq 0$  are valid in the invariant regions constructed in the previous sections.

    In (2) above, we specialise to $p=2$ and work in the setting of Lemma~\ref{lem: Riem inv}, noting that $\tg^1_{22}=0$ is ensured by considering $G=G(y)$ for the metric component. Here the integrability condition for the initial entropy becomes \begin{equation}\label{new condition, initial data}
    \sup_{\epsilon>0}
\int_\I
\frac{(\rho^\epsilon_0)^2}
     {1-(m^\epsilon_0)^2}\,\dd x
<\infty.
\end{equation}

A key remark is in order:
\begin{remark}\label{remark: finite domain}
In view of the definition of $(w^\e, z^\e)$ and the assumptions $0 \leq w^\e\leq Q$, $-Q \leq z^\e \leq 0$, we have the lower bound $\rho \geq Q^{-1}>0$. Hence, the integrability condition for the initial entropy implies that $\I$ must be bounded here. Later, in \S\ref{sec: new} we shall tackle the case $\I=\R$ by exploiting the \emph{relative entropy} \emph{\`{a} la} Dafermos~\cite{dafermos2016} instead of the entropy $\eta$ in the proof below.

%In \S\ref{sec: conclusion} we extend this argument to $\I=\R$ by using the \emph{relative entropy} \emph{\`{a} la} Dafermos~\cite{dafermos2016} instead of the entropy $\eta$ in the proof below. Nevertheless, in general we do not have a constant steady-state solution to the Gauss--Codazzi equations lying in the invariant region, so the relative entropy argument requires additional restrictions on the signs of certain $\tg^i_{jk}$. %This is why only six out of the sixteeen families of metrics in Table~\ref{table: 2} are shown to admit global, rather than semiglobal, isometric immersions.
\end{remark}

\begin{proof}[Proof of Proposition~\ref{propn: entropy analysis}]  We prove the two statements by entropy and energy analysis, respectively. For notational convenience, we drop the superscript ${}^\e$ in $\rho^\e$, $m^\e$, $w^\e$, $z^\e$..., and we also abbreviate $\A_k \equiv \A_k[w^\e;z^\e]$ for $k \in \{1,2\}$ throughout this proof.

\smallskip
\noindent
\underline{Proof of (1).} 
Let $f$, $g:\R\to\R$ be convex functions to be specified. 
Define the entropy $\eta$ and the entropy flux $q$ as follows: 
\begin{equation}\label{entropy-flux pair}
    \begin{cases}
    \eta := \rho\big(f(w)+g(z)\big),\\
    q := \rho\big(zf(w)+wg(z)\big).
    \end{cases}
\end{equation}
From \eqref{w,z equation; entropy}, we deduce the balance law:
\begin{equation}\label{eta, q eq}
    \p_t\eta + \p_xq = \e\p_{xx}\eta - \e\rho\big[f''(w)(\p_xw)^2 + g''(z) (\p_xz)^2 \big] + \Sigma,
\end{equation}
where the flux associated with the source term is
\begin{equation*}
   \Sigma:= \rho\big[wf'(w)+zg'(z)-f(w)-g(z)\big]\A_2  - \rho\big[f'(w)+g'(z)\big]\A_1. 
\end{equation*} 

Next, for fixed $p \in [1,\infty[$, we specialise to
\begin{equation*}
\begin{cases}
    f(w):=w^{1-p}, \\
    g(z):=(-z)^{1-p}.
\end{cases}
\end{equation*}
As $-1 \leq m \leq 1$, $w\geq 0$, and $z \leq 0$, it holds at every point in $\I_T$ that
\begin{align*}
    \eta &= \rho\big(w^{1-p} + (-z)^{1-p}\big)\\
    &= \rho^p\Big((m+1)^{1-p} + (1-m)^{1-p}\Big) \geq \rho^p.
\end{align*}

In view of the boundary condition~\eqref{bc for rho, m}, we have that $q\big|_{\p\I}\equiv 0$ and  $(\p_x\eta)\big|_{\p\I}\equiv 0$. We then deduce by integrating Eq.~\eqref{eta, q eq} over $\I_t$ that
\begin{align}\label{L^p estimate for rho}
  &\int_\I \eta(t,x)\,\dd x  + \e p(p-1)\iint_{\I_t}\rho\left\{w^{-1-p}(\p_xw)^2+(-z)^{-1-p}(\p_xz)^2\right\}\,\dd x \,\dd s \nonumber\\
  &\quad\leq \int_\I \eta(0,x)\,\dd x  +  \iint_{\I_t} \Sigma (s,x)\,\dd x\,\dd s \qquad\text{for any } t \in [0,T].
\end{align}
Thanks to the definition of $\Sigma$, $\eta$, $\A_1$, and $\A_2$, we have the pointwise bound:
\begin{align*}
    \Sigma &= -p\rho \Big( w^{1-p} + (-z)^{1-p} \Big)\A_2 + (p-1)\rho \Big(w^{-p}-(-z)^{-p}\Big)\A_1\\
    &\leq C \rho\big(w^{1-p} + (-z)^{1-p}\big) \equiv C\eta
\end{align*}
for a constant $C$ depending only on $p$, $\|w\|_{\linf(\I_T)}$, $\|z\|_{\linf(\I_T)}$, $\|g\|_{W^{1,\infty}(\I_T)}$,  and $\|\log\gamma\|_{W^{1,\infty}(\I_T)}$. Here we have used $\tg^1_{12}\geq0$ and $\big(w^{-p} - (-z)^{-p}\big)(w+z)\leq 0$.

Therefore, by the pointwise bounds $\rho^p \leq \eta$, $\Sigma \leq C\eta$ and Gr\"{o}nwall's inequality, we have 
\begin{align*}
    \|\rho\|_{L^\infty_t L^p_x(\I_T)} &\leq C\left(p,T,\|\eta_0\|_{L^1(\I)}, \|w\|_{\linf(\I_T)}, \|z\|_{\linf(\I_T)} \right).
\end{align*}
In particular, the constant is uniform in $\e$. This proves (1).

\smallskip
\noindent
\underline{Proof of (2).} 
We again suppress the superscript ${}^\epsilon$. Recall that $w=\frac{1+m}{\rho}$, $z=\frac{m-1}{\rho}$. We shall make use of the entropy pair corresponding to $f(w)=w^{-1}$ and $g(z)=(-z)^{-1}$; \emph{i.e.}, as in (1) above with $p=2$. Here, the entropy-entropy flux pair is given by
\begin{align}\label{eq:p2-entropy} 
&\eta_2(\rho,m)=\rho\bigl(f(w)+g(z)\bigr) =\rho\left(\frac1w+\frac1{-z}\right)=\frac{2\rho^2}{1-m^2},\nonumber\\
&
q_2(\rho,m) :=\rho\bigl(zf(w)+wg(z)\bigr) =\rho\left(\frac zw+\frac{w}{-z}\right)
 =\frac{4\rho m}{1-m^2}.
\end{align}

Denote $U=[\rho,m]^\top$. The viscous entropy identity takes the
form
\begin{equation}\label{eq:p2-entropy-identity}
\partial_t\eta_2+\partial_xq_2
=
\epsilon\partial_{xx}\eta_2
-
\epsilon
\left\langle
D^2\eta_2(U)\partial_xU,\partial_xU
\right\rangle
+\Sigma_2, 
\end{equation}
where
\begin{equation*}
\Sigma_2
=
-2\rho
\left(
 w^{-1}+(-z)^{-1}
\right)\A_2
+
\rho
\left(
 w^{-2}-(-z)^{-2}
\right)\A_1.
\label{eq:p2-source}
\end{equation*}
In view of \eqref{eq:p2-entropy}, we have
\begin{equation}
\Sigma_2
=
-2\eta_2 \A_2
+
\rho
\left(
 w^{-2}-(-z)^{-2}
\right)\A_1.
\label{eq:p2-source-2}
\end{equation}

We first establish the uniform convexity of $\eta_2$ on the
invariant region. Indeed,
\begin{equation*}
D^2\eta_2(\rho,m)
=
\begin{bmatrix}
\dfrac4\delta
&
\dfrac{8\rho m}{\delta^2}
\\
\dfrac{8\rho m}{\delta^2}
&
\dfrac{4\rho^2(1+3m^2)}{\delta^3}
\end{bmatrix}\qquad\text{where } \delta:=1-m^2.
\label{eq:p2-hessian}
\end{equation*}
We thus have
\begin{equation}
\det D^2\eta_2
=
\frac{16\rho^2}{\delta^3}>0\quad\text{and}\quad \operatorname{tr}D^2\eta_2
=
\frac4\delta
+
\frac{4\rho^2(1+3m^2)}{\delta^3}.
\label{eq:p2-hessian-det-trace}
\end{equation}

By assumption of the proposition, the Riemann invariants $w\equiv w^\e$ and $z\equiv z^\e$ are uniformly bounded in the invariant regions. Note that $w-z=\frac2\rho
\leq 2Q$, from which it follows that
\begin{equation}
\rho\geq
\rho_*:=\frac{1}{Q}>0.
\label{eq:rho-lower-bound}
\end{equation}
The smaller eigenvalue of a positive definite $2\times2$ matrix $H$ satisfies $$\lambda_{\min}(H)
\geq
\frac{\det H}{\operatorname{tr}H}.$$ Thus, by the bounds in~\eqref{eq:p2-hessian-det-trace} and \eqref{eq:rho-lower-bound}, we have that
\begin{align*}
\lambda_{\min}\bigl(D^2\eta_2(\rho,m)\bigr)
&\geq
\frac{4\rho^2}
     {\delta^2+\rho^2(1+3m^2)}
\notag\\
&\geq
\frac{4\rho^2}{1+4\rho^2}
\geq
\frac{4\rho_*^2}{1+4\rho_*^2}
=:c_*>0,
\label{eq:p2-uniform-convexity}
\end{align*}
and hence
\begin{equation}
\left\langle
D^2\eta_2(U)\partial_xU,\partial_xU
\right\rangle
\geq
c_*
\left(
 |\partial_x\rho|^2+|\partial_xm|^2
\right).
\label{eq:p2-dissipation}
\end{equation}

For the source term, let us \emph{claim} that \begin{equation}
\Sigma_2\leq C\eta_2.
\label{eq:p2-source-final}
\end{equation}
Indeed, thanks to the invariant region established in Lemma~\ref{lem: Riem inv}, $0<w \leq Q$ and $0<-z \leq  Q$ \emph{a.e.} on $\I_T$. Also by assumption we have $\widetilde{\Gamma}^{1}_{22}=0$. Note that
\begin{align*}
\A_1 =
\widetilde{\Gamma}^{1}_{12}(w+z)
+
\widetilde{\Gamma}^{1}_{11}wz,   
\end{align*}
and hence the part containing
$\widetilde{\Gamma}^{1}_{12}$ has a favourable sign:
\begin{align*}
\left(w^{-2}-(-z)^{-2}\right)
\widetilde{\Gamma}^{1}_{12}(w+z) =
-\widetilde{\Gamma}^{1}_{12}
\frac{(w+z)^2(w-z)}{w^2z^2}
\leq0,
\label{eq:A1-good-sign}
\end{align*}
where we have used
$\widetilde{\Gamma}^{1}_{12}\geq0$. On the other hand,
\begin{align*}
\left|
\left(w^{-2}-(-z)^{-2}\right)
\widetilde{\Gamma}^{1}_{11}wz\right| =
\left|
\widetilde{\Gamma}^{1}_{11}
\right|
\left|
\frac wz-\frac zw
\right| \leq CQ
\left(
 \frac1w+\frac{1}{-z}
\right).
\end{align*}
We may thus bound the entropy $\eta_2
=
\rho
\left(
 \frac1w+\frac1{-z}
\right)$ by
\begin{equation*}
\rho
\left(
w^{-2}-(-z)^{-2}
\right)\A_1
\leq
C\eta_2.
\label{eq:A1-source-bound}
\end{equation*}
In view of the expression~\eqref{eq:p2-source-2} for $\Sigma_2$ and the bound~$|\A_2|\leq C$ \emph{a.e.} on $\I_T$, we may now conclude the \emph{claim}~\eqref{eq:p2-source-final}.

We now integrate \eqref{eq:p2-entropy-identity} over $\I$. The
boundary conditions on $\rho$ and $m$ imply that 
$q_2\big|_{\partial \I}=0.$ Moreover, we have $\partial_x\eta_2\big|_{\partial \I}=0$ by the identity
\[
\partial_x\eta_2
=
\frac{4\rho}{1-m^2}\partial_x\rho
+
\frac{4\rho^2m}{(1-m^2)^2}\partial_xm.
\]
Therefore, from the bounds~\eqref{eq:p2-dissipation} and 
\eqref{eq:p2-source-final}, we infer that
\begin{equation*}
\frac{d}{dt}
\int_\I\eta_2(t,x)\,\dd x
+
c_*\epsilon
\int_\I
\left(
 |\partial_x\rho|^2
 +
 |\partial_xm|^2
\right)\,\dd x 
\leq
C\int_\I\eta_2(t,x)\,\dd x.
\label{eq:p2-energy-ineq}
\end{equation*}
By Gr\"onwall's inequality,
\begin{equation*}
\sup_{0\leq t\leq T}
\int_\I\eta_2(t,x)\,\dd x
\leq
e^{CT}
\int_\I\eta_2(0,x)\,\dd x.
\label{eq:p2-gronwall}
\end{equation*}
From the previous two estimates we infer that
\begin{equation}
\epsilon
\int_0^T\int_\I
\left(
 |\partial_x\rho|^2
 +
 |\partial_xm|^2
\right)\,\dd x\,\dd t
\leq
C_T
\int_\I\eta_2(0,x)\,\dd x.
\label{eq:p2-gradient-final}
\end{equation}
The right-hand side is bounded uniformly in $\epsilon$ by
assumption. Restoring the superscript ${}^\epsilon$, we have established that
$\left\{
\sqrt{\epsilon}\,\partial_x\rho^\epsilon
\right\}_{\epsilon>0}$ and $\left\{
\sqrt{\epsilon}\,\partial_xm^\epsilon
\right\}_{\epsilon>0}$ are bounded in $L^2(\I_T)$ uniformly in $\e$. 

The calculation above may first be carried out for smooth solutions satisfying $w^\epsilon>0$ and $z^\epsilon<0$. General initial data allowing $w^\epsilon=0$ or
$z^\epsilon=0$ are handled by approximation with smooth initial
data lying strictly inside the square-shaped invariant region $\mathscr{R}$, whose existence under the assumptions of this proposition is ensured by Lemma~\ref{lem: Riem inv}. All the
estimates above are independent of the distance to the boundary
of the invariant region. Hence, the estimate pertains in the limiting
process.  This proves (2).   \end{proof}

\begin{comment}
An intriguing characterisation for the degenerate scenario $\rho = + \infty$ is available: 
\begin{lemma}\label{lem: concentration}
Suppose that $\rho \nearrow \infty$ as one approaches an interior point $P_0$ in the domain; or, equivalently, $L=\gamma \rho \nearrow \infty$. Then $N$ decays to zero and $M$ approaches $\pm 1$ at some rate faster than any polynomial of $\rho^{-1}$. In fact,
\begin{equation*}
\text{$-N \lesssim \frac{1}{\rho^\ell}$ and one of $1 \pm \frac{M}{\gamma} \lesssim \frac{1}{\rho^\ell}$ for any $\ell \in \mathbb{N}$ near $P_0$.}
\end{equation*} 
\end{lemma}

\begin{proof}[Proof of Lemma~\ref{lem: concentration}]
We compute $N$ in two different ways:
\begin{equation*}
    -N \stackrel{\text{(a)}}{=} \gamma \cdot \frac{1-m^2}{\rho}  \stackrel{\text{(b)}}{=}  \gamma \lambda_+ \lambda_-\rho.
\end{equation*}
Since $-1 \leq m \leq 1$ and $\lambda_\pm$ are bounded, by (a) we have $-N \lesssim \mathcal{O}(\rho^{-1})$ near $P_0$. Then, in view of (b), one of $\lambda_\pm$ degenerates at order $\mathcal{O}\left({\rho^{-2}}\right)$, so $1 \pm m \lesssim \mathcal{O}(\rho^{-1})$  near $P_0$. Then $-N\lesssim \mathcal{O}(\rho^{-2})$ by (a), and hence $1 \pm m \lesssim \mathcal{O}(\rho^{-2})$ by (b)... Keep iterating this argument to conclude.   \end{proof}

\end{comment}

\section{Relative entropy analysis on infinite strip}
\label{sec: new}

In this section, we shall first prove Theorem~\ref{thm:global-relative-entropy},  reproduced below:

\begin{theorem*}
Let
$U^\e = [\rho^\e, m^\e]^\top$ be a smooth solution in $[0,T]\times\mathbb{R}$ to the balance law
\begin{equation}
\partial_tU^\varepsilon
+
\partial_xF(U^\varepsilon)
=
\varepsilon\partial_{xx}U^\varepsilon
+
S(U^\varepsilon),
\label{eq:viscous-balance-law-global}
\end{equation}
where $F(\rho,m)
=\begin{bmatrix}
    m\\
    \dfrac{m^2-1}{\rho}
\end{bmatrix}$ and $S(\rho,m)
=
\begin{bmatrix}
-a(t)\rho-2b(t)m-c(t)\left(\dfrac{m^2-1}{\rho}\right)\\
-2k(t)m
\end{bmatrix}$. Assume that $a,b,c,k\in L^\infty(]0,T[)$, $k(t)\geq 0$ for \emph{a.e.} $t\in[0,T]$, and $\rho^\e\geq\rho_*>0$, $-1<m^\e<1$ on $[0,T]\times\mathbb{R}$ for $\rho_*$ independent of $\e$. Set $\overline U(t):=[R(t),0]^\top$, where $R$ is the solution to the ODE: 
\begin{equation}
R'(t)
=
-a(t)R(t)+\frac{c(t)}{R(t)}
\label{eq:reference-density-ode}
\end{equation}
that satisfies $0<R_*\leq R(t)\leq R^*<\infty$ for all $t \in [0,T]$. Then for given $p\in[2,\infty[$ we have  
\begin{equation}
\sup_{\e>0}
\sup_{0\leq t\leq T}
\int_{\mathbb{R}}
H_p\bigl(
U^\e(t,x)\mid\overline U(t)
\bigr)\,\dd x
<\infty,
\label{eq:global-relative-entropy-bound}
\end{equation}
provided that $\sup_{\e>0}
\int_{\mathbb{R}}
H_p\bigl(
U^\e_0(x)\mid\overline U(0)
\bigr)\,\dd x
<\infty$. In particular, 
\begin{equation}
\sup_{\e>0}
\left\|
\rho^\e-R
\right\|_{L^\infty(0,T;L^p(\mathbb{R}))}
<\infty.
\label{eq:global-density-Lp-bound}
\end{equation}  
Moreover, we have the gradient estimate:
\begin{align}\label{rel entropy, gradient estimate}
    \sup_{\e>0}\left\{\e\iint_{[0,T]\times\R} |\p_xU^\e|^2 \,\dd x\,\dd t\right\} < \infty.
\end{align}
\end{theorem*}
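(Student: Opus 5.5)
The plan is a relative entropy argument \emph{\`a la} Dafermos, built on the convex entropy
\[
\eta_p(U)=\rho^p\left[(1+m)^{1-p}+(1-m)^{1-p}\right]=\rho\left(w^{1-p}+(-z)^{1-p}\right)
\]
from the proof of Proposition~\ref{propn: entropy analysis}. Since $\eta_p(R,0)=2R^p$, $\partial_\rho\eta_p(R,0)=2pR^{p-1}$ and $\partial_m\eta_p(R,0)=0$ (by symmetry in $m$), we have $H_p(U\mid\overline U)=\eta_p(U)-\eta_p(\overline U)-D\eta_p(\overline U)\cdot(U-\overline U)$. This is exactly the Dafermos relative entropy. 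The relative flux is
\[
q_p(U\mid\overline U):=q_p(U)-q_p(\overline U)-D\eta_p(\overline U)\cdot\big(F(U)-F(\overline U)\big).
\]
Because $\overline U$ depends only on $t$, the spatial terms reduce to
\[
\partial_x\big[q_p(U)-2pR^{p-1}m\big]-\varepsilon\,\partial_{xx}\big[\eta_p(U)-2pR^{p-1}\rho\big].
\]
Both are total derivatives that integrate to zero on $\mathbb R$, granted suitable decay of $U^\varepsilon-\overline U$ at infinity (inherited from the finiteness of the initial relative entropy, and justified by a cutoff argument if needed).

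First I compute the time derivative. Set
\[
D_\varepsilon:=\varepsilon\langle D^2\eta_p(U)\partial_xU,\partial_xU\rangle,\qquad \Sigma_p:=D\eta_p(U)\cdot S(U).
\]
Then
\[
\frac{d}{dt}\int H_p\,\dd x+\int D_\varepsilon\,\dd x=\int\Big[\Sigma_p-\tfrac{d}{dt}\big(2R^p\big)-2p(p-1)R^{p-2}R'(\rho-R)-2pR^{p-1}\big(S_1(U)-R'\big)\Big]\dd x .
\]
Here I used $\partial_t\rho=-\partial_xm+\varepsilon\partial_{xx}\rho+S_1(U)$. Since $\frac{d}{dt}(2R^p)=2pR^{p-1}R'$, the integrand simplifies to
\[
\Sigma_p-2pR^{p-1}S_1(U)-2p(p-1)R^{p-2}R'(\rho-R).
\]
The ODE~\eqref{eq:reference-density-ode} is chosen precisely so that $S_1(\overline U)=R'$. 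This means the integrand is a "relative source" $\Sigma_p(U)-\Sigma_p(\overline U)-D\Sigma$-type quantity, and it vanishes to second order at $U=\overline U$ once the $m$-components are handled.

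The main obstacle is the pointwise bound
\[
\Big|\Sigma_p-2pR^{p-1}S_1(U)-2p(p-1)R^{p-2}R'(\rho-R)\Big|\le C\big(H_p(U\mid\overline U)+\text{favourable terms}\big),
\]
uniformly on $\{\rho\ge\rho_*,\ |m|<1\}$ with $R_*\le R\le R^*$.

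I would split by the components of $S$:
\begin{itemize}
\item[(i)] The $k$-term contributes $-2k\,m\,\partial_m\eta_p=-2k(p-1)\rho^p m\big[(1-m)^{-p}-(1+m)^{-p}\big]\le0$, since $m$ and the bracket share a sign and $k\ge0$. This is the favourable term. It is the counterpart of $\widetilde\Gamma^1_{12}\ge0$ in Proposition~\ref{propn: entropy analysis}, and it controls the blow-up as $m\to\pm1$.
\item[(ii)] The terms from $S_1$ are $(\partial_\rho\eta_p-2pR^{p-1})S_1(U)$. Write $\partial_\rho\eta_p=p\rho^{p-1}\phi(m)$ with $\phi(m)=(1+m)^{1-p}+(1-m)^{1-p}\ge2$. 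The growth $|S_1|\lesssim\rho+1/\rho\lesssim\rho$ is linear, while $\rho^{p-1}\phi\cdot\rho\lesssim\eta_p$. For large $\rho$, or for $m$ near $\pm1$, I bound everything by $C\eta_p\le C'(H_p+1)$, but only on sets of finite measure. Globally on $\mathbb R$ I need true quadratic vanishing near $\overline U$. I would Taylor-expand in $(\rho-R,m)$ on the compact set $\{|\rho-R|\le R_*/2,\ |m|\le1/2\}$. There the expression is $O(|\rho-R|^2+m^2)$, because the linear part cancels by the ODE and by the evenness of $\phi$ in $m$. Uniform convexity of $\eta_p$ on this set gives $H_p\gtrsim|\rho-R|^2+m^2$.
\item[(iii)] Off this set, $H_p$ is bounded below by a positive constant times $\eta_p$. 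This follows from the superlinear growth of $\eta_p$ in $\rho$, together with $\phi\to\infty$ as $|m|\to1$ and $\rho\ge\rho_*$. So the linear-growth bound $C\eta_p$ becomes $C H_p$.
\end{itemize}
Delicate points: the $2b\,m$ and $c(m^2-1)/\rho$ pieces of $S_1$ are odd or even in $m$, and I must check that they either cancel at first order or are absorbed using $m^2\lesssim H_p$ near $\overline U$.

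Gr\"onwall's inequality then yields~\eqref{eq:global-relative-entropy-bound}, uniformly in $\varepsilon$. For~\eqref{eq:global-density-Lp-bound}, I use $H_p\gtrsim|\rho-R|^2$ where $|\rho-R|\lesssim1$ and $H_p\gtrsim\rho^p\gtrsim|\rho-R|^p$ where $\rho$ is large. Combined with $L^\infty_tL^1_x$ control of $H_p$ and the boundedness of $\rho-R$ near $\overline U$, this gives $L^p$ control of $\rho-R$ (via interpolation for the small-deviation part).

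Finally, the gradient bound~\eqref{rel entropy, gradient estimate} follows by integrating $D_\varepsilon$ in time. Repeating the Hessian eigenvalue estimate $\lambda_{\min}\ge\det/\operatorname{tr}$ from Proposition~\ref{propn: entropy analysis}(2) for general $p$ gives a uniform lower bound $c_*>0$ on $\{\rho\ge\rho_*\}$, which I expect to hold using $\phi\ge2$.
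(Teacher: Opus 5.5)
Your argument is correct and shares the paper's skeleton: the same $\eta_p$ and $H_p$, the identity $S_1(\overline U)=R'$, the sign of the $k$-term, Gr\"onwall, then integrating the dissipation. The central source estimate is done differently.

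\emph{Your route.} You argue softly. After discarding the nonpositive $k$-term, the rest of $\mathcal R_p=\Sigma_p-2pR^{p-1}S_1(U)-2p(p-1)R^{p-2}R'(\rho-R)$ vanishes to second order at $\overline U$. This holds because $S_1(\overline U)=R'$ and $\partial^2_{\rho m}\eta_p(\overline U)=pR^{p-1}h_p'(0)=0$. So it is $O(|\rho-R|^2+m^2)\lesssim H_p$ on a compact neighbourhood of $\overline U$. Off that neighbourhood, the linear growth of $S_1$ bounds it by $C\eta_p\le C'H_p$.

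\emph{The paper's route.} It proves explicit pointwise inequalities from the splitting $H_p=\rho^p(h_p-2)+2\Phi_p(\rho\mid R)$. These are $|G_\rho|(|\rho-R|+|m|)\lesssim H_p$, a Lipschitz bound on $S^{(1)}$, and a separate bound on the Taylor remainder $B_\rho$ multiplying $R'$.

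\emph{What each buys.} Your route is shorter and insensitive to the precise form of $S_1$. In particular, the ``delicate points'' you flag about $2bm$ and $c(m^2-1)/\rho$ need no separate check, since the first-order cancellation holds for any smooth $S_1$. The paper's route gives explicit constants.

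\emph{Dissipation.} You use uniform convexity of $\eta_p$ for every $p$. The paper only needs it for $\eta_2$ and transfers to $p$ via $H_2\le CH_p$ (its Step~4). Your expectation is right, but the reason is not $\phi\ge 2$. The perspective structure of $\rho f(w)+\rho g(z)$ gives
\[
D^2\eta_p(U)[\xi,\xi]=p(p-1)\rho^p\Big[(1+m)^{-1-p}(\xi_m-w\xi_\rho)^2+(1-m)^{-1-p}(\xi_m-z\xi_\rho)^2\Big].
\]
Since $w-z=2/\rho$ and $|w|,|z|\le 2/\rho_*$, this is $\ge c\,\rho^{p-2}|\xi|^2\ge c\,\rho_*^{p-2}|\xi|^2$.

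\emph{A step that needs an argument.} The total derivatives $\partial_x Q_p$ and $\varepsilon\partial_{xx}H_p$ do not integrate to zero for free. Appealing to ``decay inherited from the initial relative entropy'' is circular, since finiteness of $\int H_p(t)\,\dd x$ for $t>0$ is what you are proving. Any cutoff argument leaves a term $\int\chi_L'\,Q_p(U\mid\overline U)\,\dd x$. That term is controlled only through a pointwise bound $|Q_p(U\mid\overline U)|\le C\,H_p(U\mid\overline U)$; this is the paper's Step~5, used with the exponentially decaying weights $\chi_L$.

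Your near/far scheme supplies this bound immediately:
\begin{itemize}
\item Near $\overline U$: $DQ_p(\overline U)=Dq_p(\overline U)-D\eta_p(\overline U)DF(\overline U)=0$, so $Q_p$ vanishes quadratically.
\item Far from $\overline U$: $q_p=\rho^{p-1}\big[(m-1)(1+m)^{1-p}+(m+1)(1-m)^{1-p}\big]$ has the same singularity at $m=\pm1$ as $\eta_p/\rho$, so $|q_p|\lesssim\eta_p$. The remaining term $2pR^{p-1}|m|\lesssim\eta_p$ trivially.
\end{itemize}
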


 The relative entropy in consideration is defined as follows, for reasons that will become transparent along the proof:
\begin{align*}
&H_p\left(U=[\rho,m]^\top\mid\overline U=[R,0]^\top\right)
\\
&\quad:= 
\rho^p \left[(1+m)^{1-p} + (1-m)^{1-p}\right]
-
2R^p
-
2pR^{p-1}(\rho-R).
\end{align*}
This theorem establishes the uniform $L^\infty_tL^p_x$-bound for $\{(\rho^\e, m^\e)\}$ and uniform $L^2_tL^2_x$-bound for $\{(\sqrt{\e}\p_x\rho^\e, \sqrt{\e}\p_xm^\e)\}$ over the infinite strip $\R\times[0,T]$, which extends the analysis in \S\ref{sec: entropy} that applies only to bounded domains (Remark~\ref{remark: finite domain}). The initial data satisfying the required hypothesis
$\sup_{\e>0}
\int_{\mathbb{R}}
H_p\bigl(
U^\e_0(x)\mid\overline U(0)
\bigr)\,\dd x
<\infty$ are not difficult to construct: for example, one may take \[
m^\varepsilon_0\equiv0,
\qquad
\rho^\varepsilon_0-R(0)\in C_c^\infty(\mathbb{R}),
\]
with the data lying strictly in the invariant region.  As a caveat, note that the assumptions $\rho^\e_0-R(0)\in L^1(\mathbb{R})\cap L^p(\mathbb{R})$ and $m^\varepsilon_0\in L^1(\mathbb{R})$ alone cannot ensure the finiteness of the initial relative entropy, for $h_p(m)$ in the proof below blows up as $m\to\pm 1$.

\begin{proof}[Proof of Theorem~\ref{thm:global-relative-entropy}]
Fix any $p \in [2,\infty[$. For notational convenience, we suppress the superscript
$\varepsilon$ throughout the proof. We divide our arguments in eight steps below.

\smallskip
\noindent
{\bf Step~1.} Let us denote
\begin{equation*}
h_p(m)
:=
(1+m)^{1-p}+(1-m)^{1-p}
\label{eq:hp-definition}
\end{equation*}
(noting that $h_p(0)=2$,  $h_p'(0)=0$) and  
\begin{equation*}
\eta_p(\rho,m)
:=
\rho^p h_p(m).
\label{eq:global-entropy-definition}
\end{equation*}
This is the entropy corresponding to
$f(w)=w^{1-p}$ and $g(z)=(-z)^{1-p}$, as in the arguments for Proposition~\ref{propn: entropy analysis}. Also, observe that $\overline U(t)=[R(t),0]^\top$ is a spatially homogeneous solution of the balance law~\eqref{eq:viscous-balance-law-global}. Indeed, in this case we have $$S(\overline U) =
\left[-a(t)R(t)+\frac{c(t)}{R(t)},\,
0\right]^\top,$$ which equals $
\partial_t\overline U$
in view of the ODE~\eqref{eq:reference-density-ode} for $R(t)$.

\smallskip
\noindent
{\bf Step~2.} Let us introduce the relative entropy:
\begin{equation}
H_p(U\mid\overline U)
:=
\eta_p(U)-\eta_p(\overline U)
-\nabla\eta_p(\overline U)
 \cdot(U-\overline U).
\label{eq:relative-entropy-definition}
\end{equation}
Our choice of the entropy $\eta_p$ in this proof yields that 
\begin{align*}
H_p(U\mid\overline U)
&=
\rho^p h_p(m)
-
2R^p
-
2pR^{p-1}(\rho-R)
\notag\\
&=
\rho^p\bigl(h_p(m)-2\bigr)
+
2\Phi_p(\rho\mid R),
\label{eq:relative-entropy-decomposition}
\end{align*}
where
\begin{equation*}
\Phi_p(\rho\mid R)
:=
\rho^p-R^p-pR^{p-1}(\rho-R).
\label{eq:scalar-relative-entropy}
\end{equation*}
By convexity of $s\mapsto s^p$, one has $h_p(m) \geq 2$ and
\begin{equation}
c|\rho-R|^2(\rho+R)^{p-2}
\leq
\Phi_p(\rho\mid R)
\leq
C|\rho-R|^2(\rho+R)^{p-2},
\label{eq:scalar-relative-entropy-coercivity}
\end{equation}
where the constants $c,C$ depend only on
$p,\rho_*,R_*$, and $R^*$.

We also have the bound: 
\begin{equation}
H_p(U\mid\overline U)
\geq
c|\rho-R|^p.
\label{eq:relative-entropy-controls-Lp}
\end{equation}
To see this, as $h_p$ is an even and strictly convex function satisfying $h_p'(0)=0$, there exists $c_p>0$ such that $h_p(m)-2\geq c_p m^2$  for  $-1<m<1$. This together with~\eqref{eq:scalar-relative-entropy-coercivity} leads to
\begin{equation*}
H_p(U\mid\overline U)
\geq
c\left[
|\rho-R|^2(\rho+R)^{p-2}
+
\rho^p m^2
\right],
\label{eq:relative-entropy-coercivity}
\end{equation*}
which clearly implies \eqref{eq:relative-entropy-controls-Lp}.

\smallskip
\noindent
{\bf Step~3.}  Let $q_p$ denote the entropy flux corresponding to $\eta_p$ as in the proof of Proposition~\ref{propn: entropy analysis}. We define the relative entropy flux by
\begin{equation}
Q_p(U\mid\overline U)
:=
q_p(U)-q_p(\overline U)
-
\nabla\eta_p(\overline U)
\cdot
\bigl(
F(U)-F(\overline U)
\bigr).
\label{eq:relative-entropy-flux}
\end{equation}
Since $\overline U$ is independent of $x$ and satisfies the ODE $\partial_t\overline U=S(\overline U)$, we have the following associated balance law for the relative entropy-entropy flux pair:
\begin{align}
\partial_tH_p(U\mid\overline U)
+
\partial_xQ_p(U\mid\overline U)
={}&
\e
\partial_{xx}H_p(U\mid\overline U) -
\e
D^2\eta_p(U)
[
\partial_xU,\partial_xU
]
+
\mathcal R_p,
\label{eq:relative-entropy-identity}
\end{align}
with the source term
\begin{align}
\mathcal R_p
={}&
\bigl(
\nabla\eta_p(U)-\nabla\eta_p(\overline U)
\bigr)
\cdot
\bigl(
S(U)-S(\overline U)
\bigr)
\notag\\
&+
\Big[
\nabla\eta_p(U)
-
\nabla\eta_p(\overline U)
-
D^2\eta_p(\overline U)(U-\overline U)
\Big]
\cdot S(\overline U).
\label{eq:relative-source-remainder}
\end{align}

\smallskip
\noindent
{\bf Step~4.} Let us check that   
\begin{equation}\label{xx}
    H_2(U\mid\overline U)
\leq
C\,H_p(U\mid\overline U)\qquad\text{  for any $p>2$},
\end{equation}  whenever $\rho\geq\rho_*$, $|m|<1$, and $0<R_*\leq R(t)\leq R^*<+\infty$. Here the constant $C$ depends only on $p$, $\rho_*$, $R_*$, and $R^*$. 

Indeed, recall the expression \begin{align*}
    H_p(U\mid\overline U) = \rho^p\left[h_p(m)-2\right] + 2\Phi_p(\rho\mid R),
\end{align*}
from which we observe that
\begin{align*}
    h_p(m)-2 &= (1+m)^{1-p} + (1-m)^{1-p}-2\\
&    \geq 2(p-1)\left( \frac{1}{1-m^2}-1 \right)\\
&=(p-1)\left[h_2(m)-2\right].
\end{align*}
Since $\rho\geq\rho_*>0$ and $p>2$, we have $\rho^2
=
\rho^p\rho^{2-p}
\leq
\rho_*^{\,2-p}\rho^p$ and hence 
\begin{equation}
\rho^2\bigl(h_2(m)-2\bigr)
\leq
\frac{\rho_*^{\,2-p}}{p-1}
\rho^p\bigl(h_p(m)-2\bigr).
\label{eq:m-part-H2-Hp}
\end{equation}
On the other hand,  we obtain by Taylor's expansion that 
\begin{equation}
\Phi_p(\rho\mid R)
=
p(p-1)(\rho-R)^2
\int_0^1
(1-\theta)
\bigl(R+\theta(\rho-R)\bigr)^{p-2}
\,d\theta.
\label{eq:Phi-p-integral}
\end{equation}
As $R+\theta(\rho-R)\geq r_*$ for every $\theta \in [0,1]$ for $r_*:=\min\{\rho_*,R_*\}>0$, it follows from \eqref{eq:Phi-p-integral} that
\begin{equation}
2(\rho-R)^2
\leq
\frac{2}{p(p-1)r_*^{\,p-2}}
\,2\Phi_p(\rho\mid R).
\label{eq:density-part-H2-Hp}
\end{equation}
Combining \eqref{eq:m-part-H2-Hp} and
\eqref{eq:density-part-H2-Hp}, we thus conclude that
\begin{align*}
H_2(U\mid\overline U)
&=
\rho^2\bigl(h_2(m)-2\bigr)
+
2(\rho-R)^2
\\
&\leq
C\left[
\rho^p\bigl(h_p(m)-2\bigr)
+
2\Phi_p(\rho\mid R)
\right]
\\
&=
\max\left\{
\frac{\rho_*^{\,2-p}}{p-1},
\frac{2}{
p(p-1)
\bigl(\min\{\rho_*,R_*\}\bigr)^{p-2}
}
\right\} \cdot  H_p(U\mid\overline U). 
\end{align*}
This proves  \eqref{xx}.

\smallskip
\noindent
{\bf Step~5.} Now, let us show that for any $p \in [2,\infty[$, it holds that
\begin{equation} 
\left|
Q_p(U\mid\overline U)
\right|
\leq
C_Q H_p(U\mid\overline U), 
\label{eq:relative-flux-controlled}
\end{equation}
where $C_Q$ depends only on $p$, $\rho_*$, and $R^*$.

For this purpose, denote 
\[
j_p(m)
:=
(m-1)(1+m)^{1-p}
+
(m+1)(1-m)^{1-p}.
\]
Then the relative entropy flux defined in \eqref{eq:relative-entropy-flux} satisfies \begin{align}\label{yy}
Q_p(U\mid\overline U)
&:= q_p(U)-q_p(\overline U)
- \nabla\eta_p(\overline U)
\cdot \bigl(
F(U)-F(\overline U)
\bigr)\nonumber\\
&= 
\rho^{p-1}\bigl(j_p(m)-2pm\bigr)
+
2pm\bigl(\rho^{p-1}-R^{p-1}\bigr).
\end{align}

    For the first term on the right-hand side of \eqref{yy}, we observe that 
\begin{equation}
\left|
j_p(m)-2pm
\right|
\leq
C_p\bigl(h_p(m)-2\bigr)
\qquad\text{ whenever }
|m|<1.
\label{eq:jp-hp-bound}
\end{equation}
Indeed, the quotient $\frac{|j_p(m)-2pm|}{h_p(m)-2}$ extends continuously to the closed interval $[-1,1]$. Near $m=0$, Taylor expansion gives us $j_p(m)-2pm=\mathcal{O} (|m|^3)$ and $
h_p(m)-2=p(p-1)m^2+\mathcal{O}(m^4)$; while near $m=\pm1$, the numerator and denominator  have the same singular order. Thus, as $\rho\geq\rho_*>0$, it follows that
\begin{align}
\rho^{p-1}
\left|
j_p(m)-2pm
\right| \leq
\frac{C_p}{\rho_*}
\rho^p\bigl(h_p(m)-2\bigr).
\label{eq:pure-m-relative-flux}
\end{align}

 For the second term on the right-hand side of \eqref{yy}, by the mean value theorem we have
\begin{equation*}
\left|
\rho^{p-1}-R^{p-1}
\right|
\leq
C_p|\rho-R|(\rho+R)^{p-2}.
\end{equation*}
Moreover, recall that $h_p(m)-2\geq c_pm^2$ and
$\Phi_p(\rho\mid R)
\geq
c_p|\rho-R|^2(\rho+R)^{p-2}$. Also observe that $\rho+R
\leq
\left(
1+\frac{R^*}{\rho_*}
\right)\rho$ as $\rho\geq\rho_*$ and $R\leq R^*$, so \[
\frac{(\rho+R)^{(p-2)/2}}{\rho^{p/2}}
\leq C_{p,\rho_*,R^*}.
\]
We may thus write
\begin{align*}
&|m|\,|\rho-R|(\rho+R)^{p-2}
\\
&\quad=
\left(\rho^{p/2}|m|\right)
\left(
|\rho-R|(\rho+R)^{(p-2)/2}
\right)
\frac{(\rho+R)^{(p-2)/2}}{\rho^{p/2}}
\\
&\quad\leq
C
\left(\rho^{p/2}|m|\right)
\left(
|\rho-R|(\rho+R)^{(p-2)/2}
\right)
\\
&\quad\leq
C\left[
\rho^pm^2+
|\rho-R|^2(\rho+R)^{p-2}
\right].
\end{align*} 
Summarising the estimates in this paragraph, we obtain that \begin{align}
|m|
\left|
\rho^{p-1}-R^{p-1}
\right|
\leq
C\left[
\rho^p\bigl(h_p(m)-2\bigr)
+
\Phi_p(\rho\mid R)
\right].
\label{eq:mixed-relative-flux}
\end{align}

The desired bound~\eqref{eq:relative-flux-controlled} now  follows from \eqref{yy}, \eqref{eq:pure-m-relative-flux}, and
\eqref{eq:mixed-relative-flux}.

\smallskip
\noindent
{\bf Step~6.} Assume momentarily the \emph{claim} that
\begin{equation}
\mathcal R_p
\leq
C H_p(U\mid\overline U).
\label{eq:relative-source-main-estimate}
\end{equation}
In what follows, we show how to conclude the proof from \eqref{eq:relative-source-main-estimate}. 

First, as $\eta_p$ is convex, we have $D^2\eta_p(U)
[
\partial_xU,\partial_xU
]
\geq0$. Integrating the bound~\eqref{eq:relative-entropy-identity} over $\mathbb{R}$ with a suitable cutoff argument detailed below and using
\eqref{eq:relative-source-main-estimate}, we obtain that
\begin{equation}
\frac{d}{dt}
\int_{\mathbb{R}}
H_p(U\mid\overline U)\,\dd x
\leq
C
\int_{\mathbb{R}}
H_p(U\mid\overline U)\,\dd x.
\label{eq:relative-entropy-gronwall-inequality}
\end{equation} 
Therefore, the desired bound~\eqref{eq:global-relative-entropy-bound} follows from Gr\"onwall's inequality. The pointwise bound~\eqref{eq:relative-entropy-controls-Lp}, namely  $H_p(U\mid\overline U)
\geq
c|\rho-R|^p$, implies that \begin{equation*}
  \int_{\mathbb{R}}
|\rho^\varepsilon(t,x)-R(t)|^p\,\dd x
\leq
C
\int_{\mathbb{R}}
H_p\bigl(
U^\varepsilon(t,x)\mid\overline U(t)
\bigr)\,\dd x,  
\end{equation*}
which leads to \eqref{eq:global-density-Lp-bound}.

Now we provide the technical details for the rigorous derivation of \eqref{eq:relative-entropy-gronwall-inequality}. Let us prove that 
\begin{align}
&e^{-C_St}
\int_{\mathbb R}
H_p\bigl(U(t,x)\mid\overline U(t)\bigr)\,\dd x
+
\e\int_0^t e^{-C_Ss}
\int_{\mathbb R}
D^2\eta_p(U)
[\partial_xU,\partial_xU]
\,\dd x\,\dd s
\nonumber\\
&\leq
\int_{\mathbb R}
H_p\bigl(U_0(x)\mid\overline U(0)\bigr)\,\dd x\qquad\text{
for every $t\in[0,T]$,}
\label{eq:global-relative-entropy-cutoff}
\end{align}
where $C=C_S$ is the constant in~\eqref{eq:relative-source-main-estimate}, 
provided that the right-hand side is finite.

To this end, we introduce a family of positive cutoff functions. For $L\geq1$,
set
\begin{equation}
\chi_L(x)
:=
\exp\left(
1-\sqrt{1+\frac{x^2}{L^2}}
\right).
\label{eq:exponential-cutoff}
\end{equation}
Note that $0<\chi_L\leq1$ and $\chi_L(x)\nearrow1$ as $L\to\infty$ for every $x\in\mathbb R$. Furthermore,
\begin{equation}
\frac{\chi_L'(x)}{\chi_L(x)}
=
-\frac{x}{L^2\sqrt{1+x^2/L^2}},\quad |\chi_L'(x)|
\leq
\frac1L\chi_L(x),\quad\text{and  } |\chi_L''(x)|
\leq
\frac{2}{L^2}\chi_L(x).
\label{eq:first-cutoff-derivative}
\end{equation} 

For notational convenience, we put
\begin{align*}
&\mathcal H_p(t,x)
:=
H_p\bigl(U(t,x)\mid\overline U(t)\bigr),\\
&E_L(t)
:=
\int_{\mathbb R}
\chi_L(x)\mathcal{H}_p(t,x)\,\dd x,\\
&D_L(t)
:=
\int_{\mathbb R}
\chi_L(x)
D^2\eta_p(U)
[\partial_xU,\partial_xU]
\,\dd x.
\end{align*}
Then, multiplying~\eqref{eq:relative-entropy-identity} by $\chi_L$
and integrating over $\R$, we obtain that 
\begin{align}
\frac{d}{dt}E_L(t)+\e D_L(t)
={}&
\int_{\mathbb R}
\chi_L'(x)Q_p(U\mid\overline U)\,\dd x
\nonumber\\
&\quad
+
\e\int_{\mathbb R}
\chi_L''(x)\mathcal H_p(t,x)\,\dd x
+
\int_{\mathbb R}
\chi_L(x)\mathcal R_p(t,x)\,\dd x.\label{cutoff, new}
\end{align}
By \eqref{eq:first-cutoff-derivative} and
\eqref{eq:relative-flux-controlled}, we have that
\begin{align*}
\left|
\int_{\mathbb R}
\chi_L'Q_p(U\mid\overline U)\,\dd x
\right|
\leq
\frac{C_Q}{L}E_L(t),\qquad  \e\left|
\int_{\mathbb R}
\chi_L''\mathcal H_p\,\dd x
\right|
\leq
\frac{2\e}{L^2}E_L(t).
\end{align*}
Meanwhile, by \eqref{eq:relative-source-main-estimate} we have 
\begin{equation*}
\int_{\mathbb R}
\chi_L\mathcal R_p\,\dd x
\leq
C_SE_L(t).
\label{eq:cutoff-source-bound}
\end{equation*}
Substituting the previous three estimates into 
\eqref{cutoff, new}, we deduce that
\begin{equation}
\frac{d}{dt}E_L(t)+\e D_L(t)
\leq
a_L E_L(t)
\qquad\text{where }
a_L
:=
C_S+\frac{C_Q}{L}+\frac{2\e}{L^2}.
\label{eq:localized-relative-gronwall}
\end{equation}

Multiplying \eqref{eq:localized-relative-gronwall} by
$e^{-a_Lt}$ yields that
\[
\frac{d}{dt}
\left(
e^{-a_Lt}E_L(t)
\right)
+
\e e^{-a_Lt}D_L(t)
\leq0.
\]
Hence,
\begin{equation}
e^{-a_Lt}E_L(t)
+
\e\int_0^t
e^{-a_Ls}D_L(s)\,\dd s
\leq
E_L(0).
\label{eq:weighted-cutoff-estimate}
\end{equation}
By monotonicity of the cutoff function $\chi_L$ and the monotone convergence theorem, we have
\[
E_L(t)
\longrightarrow
\int_{\mathbb R}
H_p\bigl(U(t,x)\mid\overline U(t)\bigr)\,\dd x
\]
and
\[
D_L(t)
\longrightarrow
\int_{\mathbb R}
D^2\eta_p(U)
[\partial_xU,\partial_xU]\,\dd x
\]
as $L\to\infty$. Moreover, note that $
a_L\rightarrow C_S$ and
$E_L(0)
\rightarrow
\int_{\mathbb R}
H_p\bigl(U_0(x)\mid\overline U(0)\bigr)\,\dd x$. 

We may now conclude \eqref{eq:global-relative-entropy-cutoff} and hence \eqref{eq:relative-entropy-gronwall-inequality} by passing to the limit $L\to\infty$ in
\eqref{eq:weighted-cutoff-estimate}.

\smallskip
\noindent
{\bf Step~7.} Assuming \eqref{eq:relative-source-main-estimate}, it now remains to prove the dissipation estimate~\eqref{rel entropy, gradient estimate}. For this purpose, in view of Step~4 above, it suffices to prove for $p=2$ only. Indeed, by virtue of \eqref{eq:relative-entropy-identity}--\eqref{eq:relative-source-main-estimate} and integration by parts, we have that
\begin{align}\label{new, ineq}
&\int_{\mathbb{R}}
H_2\bigl(U^\e(t)\mid \overline{U}(t)\bigr)\,\dd x
+
\e\int_0^t\int_{\mathbb{R}}
D^2\eta_2\bigl(U^\e(s,x)\bigr)
\bigl[\partial_xU^\e(s,x),\partial_xU^\e(s,x)\bigr]
\,\dd x\,\dd s\nonumber
\\
&\qquad\leq
\int_{\mathbb{R}}
H_2\bigl(U^\e_0(x)\mid \overline{U}(0)\bigr)\,\dd x
+
C\int_0^t\int_{\mathbb{R}}
H_2\bigl(U^\e(s,x)\mid \overline{U}(s)\bigr)
\,\dd x\,\dd s. 
\end{align}
Here, a direct calculation leads to
\[
D^2\eta_2(\rho,m)
=
\begin{bmatrix}
\dfrac{4}{1-m^2}
&
\dfrac{8\rho m}{(1-m^2)^2}
\\
\dfrac{8\rho m}{(1-m^2)^2}
&
\dfrac{4\rho^2(1+3m^2)}{(1-m^2)^3}
\end{bmatrix}.
\]
It follows that
\[
\det D^2\eta_2(\rho,m)
=
\frac{16\rho^2}{(1-m^2)^3}
\]
and hence, as argued in Proposition~\ref{propn: entropy analysis} (2), that
\[
\lambda_{\min}\bigl(D^2\eta_2(\rho,m)\bigr)
\geq
\frac{\det D^2\eta_2(\rho,m)}
     {\operatorname{tr}D^2\eta_2(\rho,m)}
=
\frac{4\rho^2}
     {(1-m^2)^2+\rho^2(1+3m^2)}.
\]
Since $\rho^\e\geq\rho_*>0$ and $|m^\e|<1$, we  have
\begin{equation*}
\lambda_{\min}
\bigl(D^2\eta_2(\rho^\e,m^\e)\bigr)
\geq
\frac{4\rho_*^2}{1+4\rho_*^2}
=:c_*>0.
\end{equation*}
Substituting this bound into~\eqref{new, ineq} and using the boundedness of $\int_0^t\int_{\mathbb{R}}
H_2\bigl(U^\e(s,x)\mid \overline{U}(s)\bigr)
\,\dd x\,\dd s$ established in~\eqref{eq:global-relative-entropy-bound}, we arrive at~\eqref{rel entropy, gradient estimate}.

\smallskip
\noindent
{\bf Step~8.} Finally, we prove the claim~\eqref{eq:relative-source-main-estimate}.

Define the quantity
\begin{align}
G_\rho :=
\partial_\rho\eta_p(U)
-
\partial_\rho\eta_p(\overline U).
\label{eq:rho-entropy-gradient-difference}
\end{align}
Since $\partial_\rho\eta_p(\rho,m)
=
p\rho^{p-1}h_p(m)$, we may express 
\begin{align*}
    G_\rho = p\rho^{p-1}\bigl(h_p(m)-2\bigr)
+
2p\bigl(\rho^{p-1}-R^{p-1}\bigr).
\end{align*}

The first term on the right-hand side has the lower bound:
\begin{equation}
\rho^p\bigl(h_p(m)-2\bigr) \geq C^{-1}\rho^{p-1}
\bigl(h_p(m)-2\bigr)
\bigl(|\rho-R|+|m|\bigr).
\end{equation}
This follows from $|m|\leq1$ and
$\frac{|\rho-R|}{\rho}
\leq
1+\frac{R^*}{\rho_*}$.

Next, for the second term, recall that $$\Phi_p(\rho\mid R)
:=
\rho^p-R^p-pR^{p-1}(\rho-R).$$ We deduce from the mean value theorem
and~\eqref{eq:scalar-relative-entropy-coercivity} that 
\begin{equation*}
\left|
\rho^{p-1}-R^{p-1}
\right|
|\rho-R|
\leq
C\Phi_p(\rho\mid R).
\label{eq:radial-gradient-times-density}
\end{equation*}
On the other hand, 
\begin{equation*}
\left|
\rho^{p-1}-R^{p-1}
\right|
\leq
C|\rho-R|(\rho+R)^{p-2}.
\label{eq:radial-gradient-MVT}
\end{equation*}
Thus, by Young's inequality and the estimate~$(\rho+R)^{p-2}\leq C\rho^p$ (which follows from the positive lower bounds for $\rho, R$), we have that 
\begin{align*}
\left|
\rho^{p-1}-R^{p-1}
\right||m|
&\leq
C|\rho-R|(\rho+R)^{p-2}|m|
\notag\\
&\leq
C|\rho-R|^2(\rho+R)^{p-2}
+
C(\rho+R)^{p-2}m^2
\notag\\
&\leq
C\left[
\Phi_p(\rho\mid R)
+
\rho^pm^2
\right].
\label{eq:radial-gradient-times-momentum}
\end{align*}

Combining the estimates in the previous two paragraphs and invoking the definition of the relative entropy, we deduce the bound \begin{equation}
|G_\rho|
\bigl(
|\rho-R|+|m|
\bigr)
\leq
C H_p(U\mid\overline U)
\label{eq:rho-gradient-key-estimate}
\end{equation}
for $G_\rho$ defined as in \eqref{eq:rho-entropy-gradient-difference}.

Next, observe that 
\[
n(\rho,m)=\frac{m^2-1}{\rho}
\]
has bounded first derivatives in $\rho,m$ (and hence Lipschitz) on
$\left\{
(\rho,m):\rho\geq\rho_*,\ |m|\leq1
\right\}$. Denote
$$S(\rho,m,t)
=
\begin{bmatrix}
-a(t)\rho-2b(t)m-c(t)n\\
-2k(t)m
\end{bmatrix} \equiv \begin{bmatrix}
S^{(1)}(U,t)\\
S^{(2)}(U,t)
\end{bmatrix}.$$

By the assumptions on $a,b,c$ and the Lipschitz boundedness of $n$, as well as recalling the definition of $\overline{U}$ and $H_p(U|\overline{U})$, we have that 
\begin{equation*}
\left|
S^{(1)}(U,t)-S^{(1)}(\overline U,t)
\right|
\leq
C\bigl(
|\rho-R|+|m|
\bigr).
\label{eq:rho-source-lipschitz}
\end{equation*}
This together with~
\eqref{eq:rho-gradient-key-estimate} implies that
\begin{equation}
\left|
G_\rho
\bigl(
S^{(1)}(U,t)-S^{(1)}(\overline U,t)
\bigr)
\right|
\leq
C H_p(U\mid\overline U).
\label{eq:rho-source-relative-estimate}
\end{equation}

For the $m$-component, from the identities $
\partial_m\eta_p(U)=\rho^ph_p'(m)$, $
\partial_m\eta_p(\overline U)=0$, as well as $S^{(2)}(U,t)-S^{(2)}(\overline U,t)
= -2k(t)m$, we deduce that 
\begin{align}
\bigl(
\partial_m\eta_p(U)
-
\partial_m\eta_p(\overline U)
\bigr)
\bigl(
S^{(2)}(U,t)-S^{(2)}(\overline U,t)
\bigr)=
-2k(t)\rho^pmh_p'(m).
\label{eq:m-source-product}
\end{align}
Observe that $h_p'(m)
=
(p-1)
\left[
(1-m)^{-p}-(1+m)^{-p}
\right]$, so $m$ and $h_p'(m)$ have the same sign. This together with  $k(t)\geq0$ leads to
\begin{equation}
-2k(t)\rho^pmh_p'(m)\leq0.
\label{eq:m-source-good-sign}
\end{equation}

It remains to estimate the second line of
\eqref{eq:relative-source-remainder}. Since $S(\overline{U})=[R'(t),0]^\top$, the only nontrivial contribution comes from the $\rho$-component. Set 
\begin{align*}
B_\rho
:=
\partial_\rho\eta_p(U)
-
\partial_\rho\eta_p(\overline U)
-
\partial_{\rho\rho}^2\eta_p(\overline U)(\rho-R)
-
\partial_{\rho m}^2\eta_p(\overline U)m.
\end{align*}  
At the background state $\overline U=[R,0]^\top$ one has that
\begin{align*}
    \partial_{\rho m}^2\eta_p(\overline U)=0\qquad\text{and}\qquad \partial_{\rho\rho}^2\eta_p(\overline U)
= 2p(p-1)R^{p-2}.
\end{align*}
Thus, 
\begin{align*}
B_\rho
=
p\rho^{p-1}\bigl(h_p(m)-2\bigr)+
2p
\left[
\rho^{p-1}
-
R^{p-1}
-
(p-1)R^{p-2}(\rho-R)
\right].
\label{eq:rho-taylor-remainder-explicit}
\end{align*}
By Taylor's expansion, the bound for $\Phi_p$ in ~\eqref{eq:scalar-relative-entropy-coercivity}, as well as the positive lower bounds for $\rho$ and $R$, one may estimate  
\begin{align*}
&\left|
\rho^{p-1}
-
R^{p-1}
-
(p-1)R^{p-2}(\rho-R)
\right|\\& \qquad\leq
C|\rho-R|^2(\rho+R)^{p-3}
\leq
C\Phi_p(\rho\mid R).
\end{align*}
Meanwhile, we have 
\[
\rho^{p-1}\bigl(h_p(m)-2\bigr)
\leq
\frac1{\rho_*}
\rho^p\bigl(h_p(m)-2\bigr).
\]
It then follows that
\begin{equation*}
|B_\rho|
\leq
C H_p(U\mid\overline U).
\label{eq:rho-taylor-remainder-bound}
\end{equation*}
We have also assumed that $R'$ is bounded on $[0,T]$. Therefore, we conclude that
\begin{equation}
\left|
B_\rho R'(t)
\right|
\leq
C H_p(U\mid\overline U).
\label{eq:moving-reference-remainder-bound}
\end{equation}

The \emph{claim}~\eqref{eq:relative-source-main-estimate} now follows from the estimates~\eqref{eq:rho-source-relative-estimate},
\eqref{eq:m-source-good-sign}, and
\eqref{eq:moving-reference-remainder-bound}. This completes the proof of Theorem~\ref{thm:global-relative-entropy}.  \end{proof}

\begin{remark}\label{remark: initial cond R(0)}
For any given $T>0$, we can always choose the initial datum $R(0)$ to ensure the condition $0<R_*\leq R(t)\leq R^*<\infty$ for  $t \in [0,T]$, where $R(t)$ is the solution to the ODE~\eqref{eq:reference-density-ode}. Indeed, $\widetilde{R}:=R^2$ satisfies the equation $  \widetilde{R}'(t) +2a(t)\widetilde{R}(t)=2c(t)$, so the solution is 
\begin{align*}
    {R}(t) = \left\{e^{-2A(t)}\left[{R}_0^2 + 2\int_0^t e^{2A(\tau)}c(\tau)\,\dd \tau\right]\right\}^{\frac{1}{2}},\qquad A(\tau):=\int_0^\tau a(s)\,\dd s.
\end{align*}
Hence, by choosing $R(0)$ sufficiently large (depending on $T$, $\|a\|_{L^\infty([0,T])}$, and $\|c\|_{L^\infty([0,T])}$, and larger than $\frac{2}{Q}$ to ensure that $\overline{U}\big|_{t=0}=[R(0),0]^\top$ lies in the invariant region), we have $0<R_*\leq R(t)\leq R^*<\infty$ for all $t \in [0,T]$. 

\end{remark}

\section{Existence of $W^{2,p}_\loc$-isometric immersions}\label{sec: existence}

We are at the stage of establishing the existence of $W^{2,p}_\loc$-isometric immersions surfaces with $W^{3,\infty}$-Riemannian metrics with negative Gaussian curvature, \emph{provided that} invariant regions exist (Lemma~\ref{lem: Riem inv}) with respect to the PDE~\eqref{rho, m eq, new} for the Riemann invariants $(w^\e,z^\e)$. For this purpose, we first obtain, via the theory of compensated compactness, weak solutions to the Gauss--Codazzi equations~\eqref{eq:Gauss-Codazzi}, and then construct $W^{2,p}_\loc$-isometric immersions via the low-regularity version of the fundamental theorem of surface theory. We shall make crucial use of the uniform spacetime bounds established for the Riemann invariants $(w^\e, z^\e)$, as well as the bounds for $(\rho^\e, m^\e)$ in Proposition~\ref{propn: entropy analysis}.

For convenience of the reader, we recall once again the parabolic system~\eqref{w,z equation; entropy} for the Riemann invariant coordinates:
\begin{equation*}
    \begin{cases}
        \p_tw^\e + z^\e\p_x w^\e = \e \frac{\p_x\left((\rho^\e)^2\p_xw^\e\right)}{(\rho^\e)^2} + w^\e \A_2[w^\e;z^\e]-\A_1[w^\e;z^\e],\\
        \p_tz^\e + w^\e\p_x z^\e = \e \frac{\p_x\left((\rho^\e)^2\p_xz^\e\right)}{(\rho^\e)^2} + z^\e\A_2[w^\e;z^\e]-\A_1[w^\e;z^\e],
    \end{cases}
\end{equation*}
where the source terms are \begin{equation*}
    \A_k[w^\e;z^\e] := \tg^k_{22} + \tg^k_{12}(w^\e+z^\e) + \tg^k_{11}w^\e z^\e,\qquad k \in \{1,2\},
\end{equation*} 
and the symbols $\tg^i_{jk}$ are 
\begin{equation*}
    \begin{cases}
\tg^1_{22} := \G^1_{22},\qquad\tg^1_{12} := \G^1_{12} + \frac{\p_y\gamma}{2\gamma},\qquad\tg^1_{11} := \G^1_{11} + \frac{\p_x \gamma}{\gamma},\\
\tg^2_{22} := \G^2_{22} + \frac{\p_y\gamma}{\gamma},\qquad\tg^2_{12} := \G^2_{12} + \frac{\p_x\gamma}{2\gamma},\qquad\tg^2_{11} := \G^2_{11}.
    \end{cases}
\end{equation*}

The key properties for $\A_1[w^\e;z^\e]$, $\A_2[w^\e;z^\e]$ that shall be used in this section is that they are uniformly bounded in spacetime, provided that their initial data lie in the invariant region; moreover, the bound depends only on the $W^{3,\infty}$-norm of $g$.

%Here and throughout, we identify the time variable $t$ with the spatial coordinate $y$.

\begin{theorem}\label{thm: comp comp}
Let $g$ be a Riemannian metric on $\I_T = [0,T]\times\I\subset \R^2$  with finite $W^{3,\infty}$-norm and strictly negative Gaussian curvature $\gauss$. Assume that the PDE~\eqref{w,z equation; entropy} has solutions $\left(w^\e,z^\e\right)$ whose $L^\infty_t L^\infty_x$-norm on $\I_T$ are bounded independently of $\e$, and satisfy the sign conditions $w^\e\geq 0$, $z^\e \leq 0$. Define the fluid variables $\left(\rho^\e, m^\e\right)$ by
\begin{align*}
          w^\e = \frac{m^\e+1}{\rho^\e},\qquad z^\e  =\frac{m^\e-1}{\rho^\e},
   \end{align*} 
with $\rho^\e$ valued in $[0,\infty]$ for each $\e>0$. In addition, assume for given $p \in [2,\infty[$ that $\{(\rho^\e, m^\e)\}$ are uniformly bounded in $L^p_{t,x}(\I_T)$ and $\{(\sqrt{\e}\p_x\rho^\e, \sqrt{\e}\p_x m^\e)\}$ are uniformly bounded in $L^2_{t,x}(\I_T)$. 

Then, define the geometric variables
\begin{align*}
L^\e:=\gamma \rho^\e, \qquad M^\e = -{\gamma}m^\e,\qquad N^\e=\gamma \left(\frac{(m^\e)^2-1}{\rho^\e}\right).
\end{align*}
There exists a subsequence of $\{(L^\e, M^\e, N^\e)\}$ converging in the weak-$*$ topology of $\left(L^\infty_t L^p_x\right)_\loc(\I_T)$ to some weak solution $\left(\bar{L}, \bar{M}, \bar{N}\right) \in \left(L^\infty_t L^p_x\right)_\loc(\I_T)$ to the Gauss--Codazzi system~\eqref{eq:Gauss-Codazzi}. Furthermore, the metric $g$ admits a $W^{2,p}_\loc$-isometric immersion of $\I_T$ into $(\R^3,\delta)$.
\end{theorem}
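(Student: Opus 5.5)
We will prove Theorem~\ref{thm: comp comp} by a compensated compactness argument in the fluid variables, followed by an application of Lemma~\ref{lem: surface theory}.

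\textbf{Step 1: uniform bounds.} Since $w^\e-z^\e=2/\rho^\e$ and $w^\e,z^\e$ are bounded uniformly, $\rho^\e\geq\rho_*>0$. Moreover $|m^\e|\leq 1$, because $w^\e\geq0\geq z^\e$. Hence
\[
|n^\e|=\frac{1-(m^\e)^2}{\rho^\e}=-\rho^\e w^\e z^\e\leq \frac{1}{\rho_*}.
\]
Thus $(L^\e,M^\e,N^\e)$ is bounded in $(L^\infty_tL^p_x)_\loc$: $L^\e$ through the assumed $L^p$ bound (interpreted as the $L^\infty_tL^p_x$ bound of Proposition~\ref{propn: entropy analysis}), and $M^\e,N^\e$ in $L^\infty$. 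Banach--Alaoglu then gives weak-$*$ subsequential limits $(\bar L,\bar M,\bar N)$.

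\textbf{Step 2: passing to the limit in the nonlinear terms.} The Codazzi equations are linear in $(L,M,N)$ with $W^{2,\infty}$ coefficients. The viscous terms $\e\p_{xx}\rho^\e$ and $\e\p_{xx}m^\e$ tend to zero in $\mathcal D'$, because
\[
\e\,\|\p_x\rho^\e\|_{L^2}=\sqrt\e\,\bigl(\sqrt\e\,\|\p_x\rho^\e\|_{L^2}\bigr)\to0,
\]
and likewise for $m^\e$. So the Codazzi equations pass to the weak limit directly. The difficulty is the Gauss equation $\bar L\bar N-\bar M^2=\gauss$. It is nonlinear, and we need the weak limit of $L^\e N^\e-(M^\e)^2\equiv\gauss$ to equal $\bar L\bar N-\bar M^2$. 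This is a div--curl situation: the two Codazzi equations say that
\[
\p_t M^\e-\p_x L^\e \quad\text{and}\quad \p_t N^\e-\p_x M^\e
\]
equal a bounded source plus a viscous term.

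\textbf{Step 3: the div--curl lemma.} We take the vector fields $(M^\e,-L^\e)$ (curl-type) and $(N^\e,-M^\e)$, or the analogous pairing adapted to $L^p\times L^\infty$. We need their divergence/curl to lie in a compact subset of $W^{-1,2}_\loc$, or of $W^{-1,r}_\loc$ with suitable exponents. We decompose each as
\[
(\text{source terms, bounded in }L^p_\loc\subset\text{compact in }W^{-1,r})+\e\,\p_x(\p_x\cdot),
\]
where the second piece tends to zero in $W^{-1,2}$ by the gradient bound. Everything is also bounded in $W^{-1,\infty}$ or $W^{-1,p}$. Murat's interpolation lemma then yields compactness in $W^{-1,2}_\loc$ for $p\geq2$, which is why $p\geq2$ is imposed.

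The div--curl lemma in the $L^p$–$L^{p'}$ form applies since $N^\e,M^\e\in L^\infty$ and $L^\e\in L^p$, with $1/p+1/q\le1$ and $q=\infty$. It gives
\[
L^\e N^\e-(M^\e)^2 \rightharpoonup \bar L\bar N-\bar M^2
\]
in $\mathcal D'$. Hence $\bar L\bar N-\bar M^2=\gauss$, and $(\bar L,\bar M,\bar N)$ is a weak solution of~\eqref{eq:Gauss-Codazzi}. This is the framework Lemma~\ref{lem: comp comp framework} referenced in the outline.

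I expect the main obstacle to be the exponent bookkeeping for compactness in $W^{-1,2}_\loc$. The bad term is $\e\,\p_{xx}$, handled as above. There is also the source term $\tg^2_{11}n^\e$ in the $\rho$-equation, but it is bounded, and the $\tg^1_{22}\rho^\e$ term is in $L^p$. Both are therefore fine.

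\textbf{Step 4: the immersion.} Since $g\in W^{3,\infty}\subset W^{1,p}_\loc\cap L^\infty$ and $(\bar L,\bar M,\bar N)\in L^p_\loc$ solves the Gauss--Codazzi system, Lemma~\ref{lem: surface theory} applies on the simply connected domain $\I_T$. It yields a $W^{2,p}_\loc$-isometric immersion with second fundamental form $(\bar L,\bar M,\bar N)$.
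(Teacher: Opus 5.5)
Your proposal is correct and follows the paper's proof. The steps match: uniform bounds from the invariant region and Proposition~\ref{propn: entropy analysis}; $W^{-1,2}_{\loc}$-compactness of the approximate Codazzi left-hand sides, with the viscous terms $\e\gamma\p_{xx}\rho^\e$ and $\e\gamma\p_{xx}m^\e$ killed by the $\sqrt{\e}$-gradient bound; weak continuity of the Gauss equation; and then Lemma~\ref{lem: surface theory}. The only real difference is that you unpack Lemma~\ref{lem: comp comp framework} via the $L^2$ div--curl lemma, whereas the paper cites it as a black box.

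Two slips to fix in that step. The correct pairing is $(N^\e,-M^\e)$, whose divergence $\p_xN^\e-\p_yM^\e$ is controlled, against $(L^\e,M^\e)$, whose curl $\p_xM^\e-\p_yL^\e$ is controlled; the literal product $(M^\e,-L^\e)\cdot(N^\e,-M^\e)$ is not $L^\e N^\e-(M^\e)^2$. Also, with $y=t$ as in the paper, the Codazzi operators read $\p_xM^\e-\p_yL^\e$, not your $\p_tM^\e-\p_xL^\e$.

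Your observation that $N^\e$ is in fact bounded in $L^\infty$ is a small sharpening of the paper's bound. For $p=2$ you do not need Murat's lemma: $L^2_{\loc}$-bounded families are already precompact in $W^{-1,2}_{\loc}$, and the viscous terms tend to zero there.
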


To prove Theorem~\ref{thm: comp comp},  recall a compensated compactness framework for the weak continuity of approximate solutions to the Gauss--Codazzi equations. See, for instance, \cite{chen2010a, chen2010c, litzinger2021, chenli2018}.\footnote{In fact, the compensated compactness framework extends to arbitrary dimensions and codimensions; \emph{i.e.}, to the Gauss--Codazzi--Ricci equations.}

\begin{lemma}\label{lem: comp comp framework}
Fix $p \in [2,\infty]$. Consider a family of functions $\left\{(L^\epsilon,M^\epsilon,N^\epsilon)(x,y)\right\}$, defined on an open subset $\Omega\subset\mathbb R^2$, such that:

\begin{enumerate}
\item[(W1)] $\left\{(L^\epsilon,M^\epsilon,N^\epsilon)\right\}$ is bounded in $L^p_\loc(\Omega)$ uniformly in $\e$.

\item[(W2)] $\left\{\p_xM^\epsilon - \p_yL^\epsilon\right\}$ and $\left\{\p_xN^\epsilon - \p_yM^\epsilon\right\}$ are precompact in $W_{\mathrm{loc}}^{-1,2}(\Omega)$.

\item[(W3)] There exist $\sigma_j^\epsilon$, $j=1,2,3$, with $\sigma_j^\epsilon\to 0$ in the sense of distributions as $\epsilon\to 0$ such that
\[
\begin{aligned}
\p_xM^\epsilon - \p_yL^\epsilon &= \Gamma_{22}^2 L^\epsilon - 2\Gamma_{12}^2 M^\epsilon + \Gamma_{11}^2 N^\epsilon + \sigma_1^\epsilon,\\
\p_x N^\epsilon - \p_yM^\epsilon &= -\Gamma_{22}^1 L^\epsilon + 2\Gamma_{12}^1 M^\epsilon - \Gamma_{11}^1 N^\epsilon + \sigma_2^\epsilon,
\end{aligned}
\]
and
\[
L^\epsilon N^\epsilon - (M^\epsilon)^2 = \gauss + \sigma_3^\epsilon.
\]
\end{enumerate}
Then there exists a subsequence (not relabelled) $\left\{(L^\epsilon,M^\epsilon,N^\epsilon)\right\}$ converging weakly in $L^p_\loc$ to $(\bar{L},\bar{M},\bar{N})$ as $\epsilon\to 0$ such that $(\bar{L},\bar{M},\bar{N})$ is a weak solution to the Gauss--Codazzi equations if $p<\infty$. (For $p=\infty$ we take the weak-$*$ convergence of $\left\{(L^\epsilon,M^\epsilon,N^\epsilon)\right\}$.)
\end{lemma}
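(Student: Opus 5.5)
The approach is the standard div--curl argument. Its key point is that the Gauss equation is a quadratic constraint of determinant type, and compensated compactness shows this constraint is weakly continuous once the Codazzi equations give $W^{-1,2}_\loc$-compactness of the relevant curls.

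\emph{Step 1: extraction of weak limits.} By (W1) and reflexivity of $L^p_\loc$ for $2\le p<\infty$ (weak-$*$ for $p=\infty$), a diagonal argument over an exhaustion of $\Omega$ by compact sets yields a subsequence along which $(L^\e,M^\e,N^\e)\rightharpoonup(\bar L,\bar M,\bar N)$ weakly in $L^p_\loc(\Omega)$.

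\emph{Step 2: the Codazzi equations.} These are linear in the unknowns, and the Christoffel symbols are fixed $L^\infty_\loc$ (indeed $W^{1,\infty}$) coefficients. Test (W3) against $\varphi\in C^\infty_c(\Omega)$. The left-hand sides converge distributionally to $\p_x\bar M-\p_y\bar L$ and $\p_x\bar N-\p_y\bar M$. The terms $\Gamma^k_{ij}L^\e\varphi$ and their analogues converge because $\Gamma^k_{ij}\varphi\in L^{p'}$. Finally $\sigma^\e_{1,2}\to0$ in $\mathcal D'$ by hypothesis. Hence the limit satisfies both Codazzi equations weakly.

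\emph{Step 3: the Gauss equation.} This is the main obstacle, since $L^\e N^\e-(M^\e)^2$ is quadratic. Here $p\ge2$ matters: the products are bounded in $L^{p/2}_\loc\subset L^1_\loc$, so they are at least locally integrable. I would apply the div--curl lemma to the vector fields
\[
U^\e:=(M^\e,-L^\e),\qquad V^\e:=(-N^\e,M^\e).
\]
Then $\mathrm{div}\,U^\e=\p_xM^\e-\p_yL^\e$, while $\mathrm{curl}\,V^\e=\p_xM^\e+\p_yN^\e$ for the chosen orientation. Since the paper's second Codazzi expression is $\p_xN^\e-\p_yM^\e$, I would instead pair it with $U'^\e:=(N^\e,-M^\e)$, whose divergence is $\p_xN^\e-\p_yM^\e$. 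The pairing must be arranged so that each field's divergence (or curl) is one of the two Codazzi combinations, precompact in $W^{-1,2}_\loc$ by (W2). Concretely, take the first field $(M^\e,-L^\e)$ with divergence equal to the first Codazzi expression, and the second field $(N^\e,-M^\e)$. Rotating the second field by $90^\circ$ to $(M^\e,N^\e)$ turns its divergence into a curl.
The div--curl lemma, in the Murat--Tartar form with the $L^2$ requirement met by (W1) since $p\ge2$ (for $p=2$ one uses the endpoint version with $W^{-1,2}$-compactness, as in \cite{chen2010a, chenli2018}), gives
\[
M^\e\cdot M^\e+(-L^\e)N^\e\ \longrightarrow\ \bar M^2-\bar L\bar N\quad\text{in }\mathcal D'.
\]
Combined with $L^\e N^\e-(M^\e)^2=\gauss+\sigma^\e_3$ and $\sigma_3^\e\to0$, this yields $\bar L\bar N-\bar M^2=\gauss$ a.e.

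\emph{Step 4: conclusion.} Since $\bar L\bar N-\bar M^2\in L^{p/2}_\loc$, the distributional identity holds pointwise a.e. Together with Step 2, this shows $(\bar L,\bar M,\bar N)$ is a weak solution of~\eqref{eq:Gauss-Codazzi}. The only delicate point is verifying the sign and orientation bookkeeping in Step 3, so that the inner product of the chosen pair is exactly $\pm(LN-M^2)$ while each field's divergence or curl coincides with a Codazzi combination controlled by (W2).
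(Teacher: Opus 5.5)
Your proposal is correct and follows the standard div--curl argument from the references the paper cites for this lemma (the paper gives no proof of its own). The pair $(M^\e,-L^\e)$, $(M^\e,N^\e)$ has divergence and curl equal to the two Codazzi combinations controlled by (W2), and their inner product is $(M^\e)^2-L^\e N^\e$; the linear Codazzi equations pass to the limit directly. One small correction: $p=2$ is not an endpoint case, since the classical Murat--Tartar lemma is exactly the $L^2$/$W^{-1,2}_\loc$ statement, and for $p>2$ you simply use $L^p_\loc\subset L^2_\loc$.
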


%The domain $\Omega$ in the above compensated compactness framework may be unbounded. But, for our purposes here, we shall only consider bounded rectangular domains $\I_T$. 

\begin{proof}[Proof of Theorem~\ref{thm: comp comp}]
We divide our arguments into three steps below.

\smallskip
\noindent
{\bf Step~1.} We derive the ``approximate Gauss--Codazzi equations'' for $\left\{(L^\epsilon,M^\epsilon,N^\epsilon)\right\}$.

First, observe that by the definition of $N^\e$, the Gauss equation holds exactly:
\begin{equation}\label{Gauss, eps version}
    L^\epsilon N^\epsilon - (M^\epsilon)^2 = \gauss.
\end{equation}
Next, using the definition of $L^\e$ and $M^\e$ in terms of $(\rho^\e, m^\e)$, the PDE~\eqref{rho, m eq, new}  for $(\rho^\e, m^\e)$, and the definition of $\tg^i_{jk}$, we compute that
\begin{align*}
    \p_x M^\e - \p_t L^\e &= -\p_x \left(\gamma m^\e\right) - \p_t \left(\gamma \rho^\e\right)\\
    &= -\gamma \left\{ \p_t\rho^\e + \p_x m^\e \right\} - m^\e \p_x\gamma - \rho^\e \p_t\gamma\\
    &= -\gamma\left\{\e\p_{xx}\rho^\e -\tg^2_{22}\rho^\e -2 \tg^2_{12}m^\e - \tg^2_{11}\left(\frac{(m^\e)^2-1}{\rho^\e}\right)  \right\}- m^\e \p_x\gamma - \rho^\e \p_t\gamma\\
    &= -\e\gamma\p_{xx}\rho^\e + \gamma \G^2_{22}\rho^\e + 2\gamma \G^2_{12}m^\e + \gamma \G^2_{11}\left(\frac{(m^\e)^2-1}{\rho^\e}\right)\\
    &= -\e\gamma\p_{xx}\rho^\e + \G^2_{22} L^\e -2\G^2_{12} M^\e + \G^2_{11}N^\e.
\end{align*}
Similar computation goes through for $\p_x N^\e - \p_t M^\e$. By identifying $(x,t)=(x,y) \in \Omega$, we obtain the approximate Codazzi equations: 
\begin{align}
    &\p_x M^\e - \p_y L^\e = -\e\gamma\p_{xx}\rho^\e + \G^2_{22} L^\e -2\G^2_{12} M^\e + \G^2_{11}N^\e,\label{codazzi, eps, 1}\\
    &\p_x N^\e - \p_y M^\e =\e\gamma\p_{xx}m^\e - \G^1_{22} L^\e +2\G^1_{12} M^\e - \G^1_{11}N^\e.\label{codazzi, eps, 2}
\end{align}

\smallskip
\noindent
{\bf Step~2.} We verify the assumptions in the compensated compactness framework (Lemma~\ref{lem: comp comp framework}). 

\begin{itemize}
    \item 
Thanks to the $L^\infty_t L^\infty_x$-bound for Riemann invariants $\left(w^\e,z^\e\right)$ and that $\rho^\e \geq 0$, we have $-1 \leq m^\e \leq 1$ and hence $$\left|M^\e\right| \leq \gamma\qquad\text{a.e. on $\I_T$}.$$ Moreover, by Proposition~\ref{propn: entropy analysis}, for each $p \in [2,\infty[$ we may bound the  $L^\infty_t L^p_x$-norm of $\rho^\e$ uniformly in $\e$, with suitably prepared initial-boundary data. Thus, we deduce that $L^\e= \gamma \rho^\e$ and $N^\e = \gamma w^\e z^\e \rho^\e$ are bounded in $L^\infty_t L^p_x$ uniformly in $\e$. This verifies (W1).

From now on, fix one such $p \in [2,\infty[$ in the remaining parts of the proof.

\item 

Recall the following \cite{tartar1979, dcl}:
\begin{lemma*}
Let $\Omega\subset\mathbb R^n$ be an open set for any $n \in \mathbb{N}$. Then
\begin{align*}
&\bigl(\text{compact set of } W_{\mathrm{loc}}^{-1,q}(\Omega)\bigr)
\cap
\bigl(\text{bounded set of } W_{\mathrm{loc}}^{-1,r}(\Omega)\bigr)\\
&\qquad\qquad \subset
\bigl(\text{compact set of } W_{\mathrm{loc}}^{-1,2}(\Omega)\bigr),
\end{align*}
where $q$ and $r$ are constants satisfying $1<q\leq 2<r$.
\end{lemma*}

By Proposition~\ref{propn: entropy analysis} (2), $\{\sqrt{\e}\gamma\p_{x}m^\e\}$ and $\{\sqrt{\e}\gamma\p_{x}\rho^\e\}$ are bounded in $L^2(\I_T)$ uniformly in $\e$. Thus, $\{\e\gamma\p_{xx}\rho^\e\}$ and $\{\e\gamma\p_{xx}m^\e\}$ are compact in $W^{-1,2}(\I_T)$. On the other hand, by (W1) and Sobolev embedding, $\left\{(L^\epsilon,M^\epsilon,N^\epsilon)\right\}$ lies in a compact subset of $W^{-1,q}(\I_T)$ for some $1<q \leq 2$; while by (W1) and Rellich's lemma or Sobolev embedding, $\left\{(L^\epsilon,M^\epsilon,N^\epsilon)\right\}$ is bounded in $W^{-1,r}(\I_T)$ for some $r>2$. Therefore, in view of the lemma quoted above and the approximate Gauss--Codazzi equations~\eqref{Gauss, eps version}, \eqref{codazzi, eps, 1}, and \eqref{codazzi, eps, 2}, (W2) is verified.

\item 
For (W3), by an inspection on~\eqref{Gauss, eps version}, \eqref{codazzi, eps, 1}, and \eqref{codazzi, eps, 2}, we only need to check that $\{\e\gamma\p_{xx}\rho^\e\}$ and $\{\e\gamma\p_{xx}m^\e\}$ tend to zero in the sense of distributions as $\e \to 0$. Indeed, take any test function $\varphi \in C^\infty_c(\I_T)$. We have that
\begin{align*}
\left|\iint_{\I_T} \e\gamma\p_{xx}\rho^\e \varphi\,\dd x\,\dd y\right| &= \sqrt{\e} \left|\iint_{\I_T}\left(\sqrt{\e}\p_x\rho^\e\right) \p_x\left(\gamma\varphi\right)\,\dd x\,\dd y\right|\\
&\leq C\sqrt{\e} \left\| \sqrt{\e}\p_x\rho^\e \right\|_{L^2({\rm spt}(\varphi))}, 
\end{align*}
where $C$ depends only on $\|\gamma\|_{C^1(\I_T)}$, $\|\varphi\|_{C^1(\I_T)}$, and the support of $\varphi$. In light of Proposition~\ref{propn: entropy analysis} (2), we deduce that 
\begin{align*}
\left|\iint_{\I_T}\e\gamma\p_{xx}\rho^\e \varphi\,\dd x\,\dd y\right| \leq C'\sqrt{\e} \longrightarrow 0\quad \text{as } \e \to 0
\end{align*}
for $C'$ depending only on $\|g\|_{W^{3,\infty}(\I_T)}$, $\|\varphi\|_{C^1(\I_T)}$, and the support of $\varphi$. The argument for  $\{\e\gamma\p_{xx}m^\e\}$  is completely analogous. Thus (W3) follows. 
\end{itemize}

\smallskip
\noindent
{\bf Step~3.} By Lemma~\ref{lem: comp comp framework} and Step~2, there exists a subsequence (not relabelled) $\left\{(L^\epsilon,M^\epsilon,N^\epsilon)\right\}$ converging weakly-$*$ in $L^p$ to $(\bar{L},\bar{M},\bar{N})$ as $\epsilon\to 0$ such that $(\bar{L},\bar{M},\bar{N})$ is a weak solution to the Gauss--Codazzi equations. Then, by Lemma~\ref{lem: surface theory}, there exists an essentially unique $W^{2,p}_\loc$-isometric immersion  whose second fundamental form is $\overline{\two}=\begin{bmatrix}
    \bar{L} & \bar{M}\\
    \bar{M} & \bar{N}
\end{bmatrix}$. We may take such $\overline{\two}$ to be defined on the whole strip $\R \times [0,T]$ via a routine exhaustion and diagonalisation argument, and we safely omit the details.

The proof of Theorem~\ref{thm: comp comp} is now complete.  \end{proof}

The key point of Theorem~\ref{thm: comp comp} is to reduce the proof for the existence of isometric immersions to that of the existence of invariant regions for the PDE~\eqref{w,z equation; entropy} satisfied by the Riemann invariant coordinates $\left(w^\e,z^\e\right)$.

%One sufficient condition for the existence of invariant regions for diagonal metrics $g=E(x,y)\,dx^2 + G(y)\,dy^2$ has been established in Lemma~\ref{lem: Riem inv}, which essentially amounts to checking the sign of certain modified Christoffel symbols $\tg^i_{jk}$. Fortunately, many families of negatively curved surface metrics, including those obtained via deforming the metrics for various important classical minimal surfaces, indeed satisfy such conditions. This is the content of the next section. 

\section{Conclusion of Theorems~\ref{thm: main} and \ref{thm: more general metrics}}\label{sec: conclusion}

\begin{proof}[Proof of Theorem~\ref{thm: main}]
We directly verify (see \S\ref{app:metric-calculations}, Table~\ref{table:metric-parameters}) that the metrics listed in Table~\ref{table: 2} satisfy the hypotheses of Theorem~\ref{thm: more general metrics} (2). Thus, Theorem~\ref{thm: main} follows from Theorem~\ref{thm: more general metrics}.   \end{proof}

\begin{proof}[Proof of Theorem~\ref{thm: more general metrics}]

We first show the existence of $W^{2,p}_\loc$-isometric immersions in (1) and (2), and then construct an initial datum for $(\rho_0^\e, m^\e_0)$ such that the corresponding second fundamental form for the isometric immersion is genuinely unbounded near $\{t=0\}$. 

\noindent
\underline{Proof of (1)}. Assume that $g=E(x,y)dx^2+G(y)dy^2$ on $\I_T$, where $T>0$ is an arbitrary positive number and $\I \subset\R$ an arbitrary bounded interval. By definition of the modified Christoffel symbols in~\eqref{modified Gamma}, we have
\begin{align*}
  \tg^1_{22}=-\frac{\p_xG}{2E}=0,\qquad   \tg^1_{12} = \frac{1}{2}\p_y\left[\log (E\gamma)\right].
\end{align*}
For positive $E,G \in W^{3,\infty}(\I_T)$ with $\p_y(E\gamma)\geq 0$ we thus have $\tg^1_{22}=0$ and $\tg^1_{12}\geq 0$, which verify the assumptions of Lemma~\ref{lem: Riem inv}.  

In view of Remark~\ref{remark: initial data}, we may choose $Q_0$ so small, depending only on $T$ and the $W^{3,\infty}(\I_T)$-norm of the metric $g$ (through the constant $K_T$ in the ODE~\eqref{eq:Q-moving-region}), that the lifespan $T_\star$ of \eqref{eq:Q-moving-region} is larger than $10 T$. We thus infer from Lemma~\ref{lem: Riem inv} the existence of time-dependent square-shaped invariant regions $\mathcal{R}(t)$ for each $t \in [0,T]$. That is, the Riemann invariants $(w^\e, z^\e)$, which are solutions to the parabolic system~\eqref{w,z-regularised eq}, have $\linf$-bounds $0 \leq w(t,x) \leq Q(t)$ and $-Q(t) \leq z(t,x) \leq 0$ for \emph{a.e.} $(t,x) \in \I_T$. Here, by solving the ODE~\eqref{eq:Q-moving-region}, we find that $0 < Q(t) \leq C_T$ for each $t \in [0,T]$, where $C_T$ depends only on $T$ and the $W^{3,\infty}(\I_T)$-norm of $g$ (since $C_T=C(Q_0, K_T)$).

Fix any $p \in [2,\infty[$. By Proposition~\ref{propn: entropy analysis} and the boundedness of $\I_T$, we have
\begin{align*}
    \sup_{t \in [0,T]}\int_\I \left[\rho^\e(t,x)\right]^p\,\dd x + \e \iint_{\I_T} \left\{\left|\p_x \rho^\e\right|^2 + \left|\p_x m^\e\right|^2\right\}\,\dd t\,\dd x \leq C\left(p,T,\|g\|_{W^{3,\infty}(\I_T)}\right). 
\end{align*}
We may now conclude from the compensated compactness Theorem~\ref{thm: comp comp} the existence of a $W^{2,p}$-isometric immersion of $(\I_T,g)$ into $(\R^3,\delta)$.

\smallskip
\noindent
\underline{Proof of (2)}. Now we turn to the semiglobal isometric immersions. Assume that $T>0$ is arbitrary and $\I=\R$, and fix any $p \in [2,\infty[$. As in the proof of~(1) above, we may take $Q_0>0$ so small that the square-shaped invariant region $\mathcal{R}(t)$ exists up to time $T$. Here $Q_0$ depends only on $T$, the $W^{3,\infty}$-norm of $g$, and the $\linf$-norm of $\p_y\log\gamma$ (via the $\linf$-norm of $\tg^i_{jk}$, which may fail to be bounded by the $W^{3,\infty}$-norm of $g$ on $\R \times [0,T]$).

Next, we shall apply the relative entropy Theorem~\ref{thm:global-relative-entropy} with 
\begin{align*}
    a=\tg^2_{22},\qquad b=\tg^2_{12},\qquad c=\tg^2_{11},\qquad k=\tg^1_{12}\geq 0.
\end{align*}
Note that $\tg^1_{22}=\tg^1_{11}=0$ when both metric components $E$, $G$ are independent of $x$. The $\linf$-norms of $a$, $b$, $c$,  and $k$ are controlled by the $W^{3,\infty}$-norm of $g$ and the $\linf$-norm of $\p_y\log\gamma$.  The PDE~\eqref{rho, m equation} for the fluid variables $(\rho^\e,m^\e)$ now reduces to the form considered in Theorem~\ref{thm:global-relative-entropy}. The uniform positive bound for $\rho^\e$ follows from the existence of invariant regions --- indeed, by \eqref{eq:rho-lower-bound} one has $\rho^\e \geq  Q^{-1}>0$. 
Moreover, in view of  Remark~\ref{remark: initial cond R(0)}, by choosing the initial datum $R(0)$ for the ODE~\eqref{eq:reference-density-ode} sufficiently large, we can ensure the hypothesis $0<R_*\leq R(t)\leq R^*<\infty$ on the boundedness of the background ODE solution $R(t)$. Here $R(0)$ depends only $T$, the $W^{3,\infty}$-norm of $g$, and the $\linf$-norm of $\p_y\log\gamma$.

By now, in order to apply Theorem~\ref{thm:global-relative-entropy}, it only remains to check that the initial data $(\rho^\e_0, m^\e_0)$ can be chosen such that the $p$-th relative entropy is finite: 
\begin{align}\label{final, condition a}
    \sup_{\e>0}
\int_{\mathbb{R}}
H_p\left(
U^\e_0(x)= [\rho_0^\e, m_0^\e]^\top\mid\overline U(0)
=[R(0),0]^\top\right)\,\dd x
<\infty.
\end{align}
We also require such initial data to be comparable with the conditions in Lemma~\ref{lem: Riem inv}; namely that
\begin{align}\label{final, condition b}
-Q_0 \leq \frac{m^\e_0 -1}{\rho_0^\e} \leq 0 \leq \frac{m^\e_0 + 1}{\rho_0^\e} \leq Q_0.
\end{align}

For the above purpose, we first require $R(0)>{Q_0}^{-1}$ by enlarging $R(0)$ if necessary. We shall select $\rho_0^\e \equiv \rho_0$ and $m_0^\e\equiv m_0$ for all $\e$. In fact, a particular simple choice is 
\[
m_0(x):=0,
\qquad
\rho_0(x):=R_0+f(x),
\]
where $0\leq f \in L^p(\R)$ is of compact support. The corresponding initial Riemann invariants are
\[
w_0(x)
=
\frac{1+m_0(x)}{\rho_0(x)}
=
\frac{1}{R_0+f(x)},\qquad z_0(x)
=
\frac{m_0(x)-1}{\rho_0(x)}
=
-\frac{1}{R_0+f(x)}.
\] 
Since $f\geq0$ and $R_0^{-1}<Q_0$, \eqref{final, condition b} immediately follows.

We next verify~\eqref{final, condition a}. Recall the notation $\Phi_p(\rho\mid R)
:=
\rho^p-R^p-pR^{p-1}(\rho-R)$. As $m_0=0$, the relative entropy then reduces to
\begin{align*}
H_p\left(
U_0\mid\overline U_0
\right)
&=
H_p\left(
[\rho_0,0]^\top
\mid
[R_0,0]^\top
\right)
\nonumber\\
&=
2\Phi_p(\rho_0\mid R_0)=
2\left[
(R_0+f)^p-R_0^p-pR_0^{p-1}f
\right].
\end{align*}
For $f\geq0$, we apply Taylor's expansion to obtain
\[
\Phi_p(R_0+f\mid R_0)
=
p(p-1)f^2
\int_0^1
(1-\theta)(R_0+\theta f)^{p-2}
\,\dd \theta.
\]
Hence,
\begin{equation*}
0
\leq
\Phi_p(R_0+f\mid R_0)
\leq
C_{p,R_0}\bigl(f^2+f^p\bigr).
\end{equation*}
But for $p\geq2$, compactly supported $f\in L^p(\mathbb R)$ also lies in $f\in L^2(\mathbb R)$. Thus~\eqref{final, condition a} follows.

To conclude, we apply Theorem~\ref{thm:global-relative-entropy} to deduce that 
\begin{align*}
&\sup_{t \in [0,T]}\int_\R \left|\rho^\e(t,x)-R(t)\right|^p\,\dd x + \e \iint_{\R \times [0,T]} \left\{\left|\p_x \rho^\e\right|^2 + \left|\p_x m^\e\right|^2\right\}\,\dd t\,\dd x \\
&\qquad \leq C\left(p,T,\|g\|_{W^{3,\infty}(\I_T)}, \|\log\gamma\|_{W^{1,\infty}(\R \times [0,T])},f\right). 
\end{align*}
By construction, $\overline{U}(t)=[R(t),0]^\top$ is a special solution to the PDE~\eqref{rho, m equation} for $(\rho^\e,m^\e)$ with any $\e$. It corresponds to the second fundamental form ${\rm diag}\left(\gamma R(t), -\gamma R^{-1}(t) \right)$ that satisfies the Gauss--Codazzi equations. We may apply the compensated compactness framework in Theorem~\ref{thm: comp comp} to conclude the existence of a $W^{2,p}_\loc$-isometric immersion of $(\R \times [0,T], g)$ into $(\R^3,\delta)$. Also notice that Remark~\ref{remark: special isom imm} follows from this paragraph.

\smallskip
\noindent
\underline{Proof that the second fundamental forms are in $L^p_{\mathrm{loc}}
\setminus L^\infty_{\mathrm{loc}}$.} Denote by $\overline \two=\begin{bmatrix}
    L&M\\
    M&N
\end{bmatrix}$ the second fundamental form of the $W^{2,p}_\loc$-isometric immersions, whose existence has been established in the earlier parts of the proof. It suffices to prove for the case $\I=\R$ only; in fact, let us prove that $\overline \two$ lies in $L^p_\loc(\R \times [0,T])$ for $2\leq p<\infty$ but not in $L^\infty_\loc(\R\times [0,\delta_0])$ for any $\delta_0>0$.

Fix any nonnegative $\chi\in C_c^\infty(\mathbb R)$ such that
$\chi\equiv1$ in a neighbourhood of the origin. In the proof of (2) above, take 
\begin{align*}
    f(x)
=
\chi(x)|x|^{-\alpha},
\qquad
0<\alpha<\frac1p.
\end{align*}
The conditions~\eqref{final, condition a} and \eqref{final, condition b} are compatible with an unbounded initial density:
\[
\rho_0-R_0=f\in L^p(\mathbb R)\setminus L^\infty(\mathbb R).
\]
Thus, if $\gamma(x,0) \geq \gamma_\star >0$, then the  corresponding component of the initial second fundamental
form $L_0(x)=\gamma(x,0)\rho_0(x)$ lies in $L_0\in L^p_{\mathrm{loc}}(\mathbb R)
\setminus L^\infty_{\mathrm{loc}}(\mathbb R)$.

Let us label $$U_0
=
(\rho_0,m_0)^\top
=
(R_0+f,0)^\top$$ and establish
\begin{equation}
U(t,\cdot)
\longrightarrow
U_0
\qquad
\text{strongly in }L^p_{\loc}(\R)
\quad\text{as }t\to0^+.
\label{eq:strong-initial-trace}
\end{equation}
Once this is proved, we know that the second fundamental form component $L \in L^p_\loc (\R \times [0,\delta_0]) \setminus L^\infty_\loc(\R \times [0,\delta_0])$ for any $\delta_0>0$.

\begin{proof}[Proof of the strong convergence of initial trace~\eqref{eq:strong-initial-trace}]
We divide the arguments into six steps.

\smallskip
\noindent
\textbf{Step 1.}
Choose $0 \leq f^\e\in C_c^\infty(\R)$ such that $f^\e \rightarrow f$ strongly in $L^p(\R)\cap L^2(\R)$. This is possible for $p\geq2$ and $f$ compactly supported. Set
\[
U_0^\e
=
(\rho_0^\e,m_0^\e)^\top
:=
(R_0+f^\e,0)^\top.
\]
Then $U_0^\e\rightarrow U_0$
strongly in $L^p_{\loc}(\R)$. Moreover, since $m_0^\e=0$ we have that
\[
H_p(U_0^\e\mid\overline U(0))
=
2\Phi_p(R_0+f^\e\mid R_0),
\]
where $\Phi_p(\rho\mid R) :=
\rho^p-R^p-pR^{p-1}(\rho-R)$ as before. The bound (for $s\geq 0$)
\[
0\leq
\Phi_p(R_0+s\mid R_0)
\leq
C_{p,R_0}(s^2+s^p)
\]
and the strong convergence of $f^\e$ in $L^p\cap L^2$ imply that
\begin{equation}
\int_{\R}
H_p(U_0^\e\mid\overline U(0))\,\dd x
\longrightarrow
\int_{\R}
H_p(U_0\mid\overline U(0))\,\dd x.
\label{eq:initial-relative-entropy-convergence}
\end{equation}

\smallskip
\noindent
\textbf{Step 2.} We first prove the weak convergence version of \eqref{eq:strong-initial-trace}, \emph{i.e.}, replacing the strong $L^p$-convergence by weak $L^p$-convergence therein. Indeed, set
\begin{align*}
   V(t,x)
:=
U(t,x)-\overline U(t)
=
\begin{bmatrix}
\rho(t,x)-R(t)\\
m(t,x)
\end{bmatrix},\qquad  V_0(x)
:=
U_0(x)-\overline U(0)
=
\begin{bmatrix}
f(x)\\
0
\end{bmatrix}.
\end{align*}
By the coercivity of relative entropy (see~\eqref{eq:relative-entropy-controls-Lp} and its proof), as $\rho\geq\rho_*>0$ and $|m|\leq1$ we have
\[
H_p(U\mid\overline U)
\geq
c\left(
|\rho-R|^p+|m|^p
\right).
\]
Thus $V\in L^\infty(0,T;L^p(\R))$. 
Since $\overline U$ is a spatially homogeneous solution of the
balance law, 
\begin{equation}
\partial_tV
+
\partial_x
\left(
F(U)-F(\overline U)
\right)
=
S(U)-S(\overline U)
\label{eq:equation-for-V}
\end{equation}
in the sense of distributions, where $F(\rho,m)
=\left[m,\,\dfrac{m^2-1}{\rho}\right]^\top$. The function $n(\rho,m):=\frac{m^2-1}{\rho}$ has bounded first derivatives on $\{(\rho,m):\rho\geq\rho_*,\ |m|\leq1\}$. Thus, 
\begin{equation}
\left|
F(U)-F(\overline U)
\right|
+
\left|
S(U)-S(\overline U)
\right|
\leq
C\left(
|\rho-R|+|m|
\right).
\label{eq:flux-source-lipschitz-trace}
\end{equation}
Then for every
$\varphi\in C_c^\infty(\R;\R^2)$ it holds that 
\begin{align*}
\int_{\R}
\bigl(V(t,x)-V_0(x)\bigr)\cdot\varphi(x)\,\dd x
={}&
\int_0^t\int_{\R}
\bigl(F(U)-F(\overline U)\bigr)
\cdot\partial_x\varphi\,\dd x\,\dd s
\\
&+
\int_0^t\int_{\R}
\bigl(S(U)-S(\overline U)\bigr)
\cdot\varphi\,\dd x\,\dd s,
\end{align*}
where the right-hand side tends to zero as $t \to 0^+$ by \eqref{eq:flux-source-lipschitz-trace}. Thus, \begin{equation}
V(t,\cdot)
\rightharpoonup
V_0
\qquad
\text{weakly in }L^p(\R)
\quad\text{as }t\to0^+.
\label{eq:weak-initial-trace-V}
\end{equation}
Moreover, it follows from \eqref{eq:equation-for-V} that $
\partial_tV$ is bounded in $L^\infty(0,T;W^{-1,p}(\R))$.
Hence, after taking a weakly continuous representative, $
V\in C_{\mathrm w}([0,T];L^p(\R))$. 
 
 Note that \eqref{eq:weak-initial-trace-V} is equivalent to $\rho(t,\cdot)-R(t)
\rightharpoonup f$ and $m(t,\cdot)
\rightharpoonup0$ weakly in $L^p(\R)$. In particular, $\rho(t,\cdot)
\rightharpoonup
\rho_0$ weakly in $L^p_{\loc}(\R)$.  

\smallskip
\noindent
\textbf{Step 3.} Define for \emph{a.e.} $t\in[0,T]$ that
\[
E_p(t)
:=
\int_{\R}
H_p\bigl(U(t,x)\mid \overline U(t)\bigr)\,\dd x.
\]  
Recall \eqref{eq:relative-entropy-gronwall-inequality} (so $E_p(t) \leq e^{ct}E_p(0)$ by Gr\"{o}nwall) in the proof of Theorem~\ref{thm:global-relative-entropy}; it implies that $\limsup_{t\to0^+}E_p(t)
\leq E_p(0)$. On the other hand, the weak convergence
\eqref{eq:weak-initial-trace-V}, the convergence $R(t)\to R_0$, and the convexity of the relative entropy imply that $
E_p(0)
\leq
\liminf_{t\to0^+}E_p(t)$. Hence,\footnote{The relative-entropy inequality is initially obtained for almost
every $t$. The conclusion above extends to the weakly continuous
representative: for an exceptional time, one approximates it by
Lebesgue times and uses weak continuity together with weak lower semicontinuity of the relative entropy.}
\begin{equation}
E_p(t)
\longrightarrow
E_p(0)
\qquad
\text{as }t\to0^+.
\label{eq:relative-entropy-converges-at-zero}
\end{equation}

\smallskip
\noindent
\textbf{Step 4.} Recall the notation $h_p(m)
:=
(1+m)^{1-p}+(1-m)^{1-p}$. Let us write 
\begin{align*}
    E_p(t) &= A_p(t)+B_p(t)\\
    &:= \int_{\R}
\rho(t,x)^p
\bigl(h_p(m(t,x))-2\bigr)\,\dd x + 
2\int_{\R}
\Phi_p(\rho(t,x)\mid R(t))\,\dd x.
\end{align*}
Note that $A_p(0)=0$ and hence $E_p(0)=B_p(0)$ since $m_0=0$. By the convexity of $\rho\mapsto\Phi_p(\rho\mid R)$ and that $\rho(t,\cdot)
\rightharpoonup
\rho_0$ weakly in $L^p_{\loc}(\R)$ (see Step~2), it holds that $B_p(0)
\leq
\liminf_{t\to0^+}B_p(t)$. But $
0\leq B_p(t)\leq E_p(t)$, so by \eqref{eq:relative-entropy-converges-at-zero} we have that $B_p(0)
\leq
\liminf_{t\to0^+}B_p(t)
\leq
\limsup_{t\to0^+}B_p(t)
\leq
E_p(0)
=
B_p(0)$. Therefore,
\begin{equation}
B_p(t)\longrightarrow B_p(0)
\qquad
\text{as }t\to0^+.
\label{eq:Bp-convergence}
\end{equation}
It then follows from
$A_p(t)=E_p(t)-B_p(t)$ that
\begin{equation}
A_p(t)\longrightarrow0
\qquad
\text{as }t\to0^+.
\label{eq:Ap-convergence}
\end{equation}

Furthermore, we remark that the convergence result~\eqref{eq:relative-entropy-converges-at-zero} can be localised: Let $K\Subset\R$ be a bounded interval and set
\[
B_p(t;K)
:=
2\int_K
\Phi_p(\rho(t,x)\mid R(t))\,\dd x.
\]
By weak lower semicontinuity and~\eqref{eq:Bp-convergence}, one has
\begin{align*}
\limsup_{t\to0^+}B_p(t;K)
&=
\limsup_{t\to0^+}
\left(
B_p(t)-B_p(t;\R\setminus K)
\right)
\\
&\leq
B_p(0)-B_p(0;\R\setminus K)
=
B_p(0;K).
\end{align*}
It thus follows that
\begin{equation}
B_p(t;K)
\longrightarrow
B_p(0;K)
\qquad
\text{as }t\to0^+.
\label{eq:local-Bp-convergence}
\end{equation}

\smallskip
\noindent
\textbf{Step 5.} Now we are ready to conclude. From the definition of $B_p$, we deduce the identity:
\begin{align*}
\frac12B_p(t;K)
={}&
\int_K\rho(t,x)^p\,\dd x
-
|K|R(t)^p-
pR(t)^{p-1}
\left(
\int_K\rho(t,x)\,\dd x-|K|R(t)
\right).
\end{align*}
By the weak $L^p_{\loc}(\R)$-convergence $\rho(t,\cdot)
\rightharpoonup
\rho_0$, $R(t)\to R_0$, and~\eqref{eq:local-Bp-convergence}, we have 
\begin{equation}
\int_K\rho(t,x)^p\,\dd x
\longrightarrow
\int_K\rho_0(x)^p\,\dd x.
\label{eq:rho-p-norm-convergence}
\end{equation}
Thus, the Radon--Riesz property of $L^p(K)$ and the arbitrariness of $K\Subset \R$ give us 
\begin{equation}\label{eq:rho-strong-trace}
\rho(t,\cdot)
\longrightarrow
\rho_0
\qquad
\text{strongly in }L^p_{\loc}(\R).
\end{equation}

On the other hand, the function $h_p$ is even and strictly convex, with
$h_p(0)=2$ and $h_p'(0)=0$. Hence, 
\[
h_p(m)-2\geq c_p m^2
\qquad\text{for } |m|<1.
\]
Using also $\rho\geq\rho_*>0$, we obtain that $A_p(t)
\geq
c_p\rho_*^p
\int_{\R}|m(t,x)|^2\,\dd x$. It follows from \eqref{eq:Ap-convergence} that $m(t,\cdot)
\longrightarrow0$ strongly in $L^2(\R)$. But $p\geq2$ and $|m|\leq1$, we have $|m|^p\leq |m|^2$. Hence,
\begin{equation}
m(t,\cdot)
\longrightarrow0
\qquad
\text{strongly in }L^p(\R).
\label{eq:m-strong-trace}
\end{equation}

The desired conclusion~\eqref{eq:strong-initial-trace} follows from \eqref{eq:rho-strong-trace} and
\eqref{eq:m-strong-trace}.  \end{proof}

\smallskip

This concludes the proof of Theorem~\ref{thm: more general metrics}.  \end{proof}

  \begin{appendix}

\section{Explicit curvature and monotonicity calculations}
\label{app:metric-calculations}

The parameters $\gauss$, $\gamma := \sqrt{-\gauss}$, and $\tg^1_{22}$ are presented in the following:

\begingroup
\scriptsize
\setlength{\tabcolsep}{2pt}
\renewcommand{\arraystretch}{1.35}

\begin{longtable}{
    @{}
    >{\centering\arraybackslash}p{6cm}
    >{\centering\arraybackslash}p{3cm}
    >{\centering\arraybackslash}p{3cm}
    >{\centering\arraybackslash}p{3cm}
    @{}
}

\caption{Parameters for the
fifteen families of metrics in Table~\ref{table: 2}}
\label{table:metric-parameters}
\\

\toprule
\textbf{Metric}
&
\(\boldsymbol{\gamma}\)
&
\(\boldsymbol{\kappa_\Sigma}\)
&
\(\displaystyle
 \boldsymbol{\frac12\partial_y\log(E\gamma)}\)
\\
\midrule
\endfirsthead

\multicolumn{4}{c}{
\tablename~\thetable\ (continued)
}
\\
\toprule
\textbf{Metric}
&
\(\boldsymbol{\gamma}\)
&
\(\boldsymbol{\kappa_\Sigma}\)
&
\(\displaystyle
 \boldsymbol{\frac12\partial_y\log(E\gamma)}\)
\\
\midrule
\endhead

\midrule
\multicolumn{4}{r}{\emph{Continued on the next page}}
\\
\endfoot

\bottomrule
\endlastfoot

% 1
\(
\displaystyle
g=P(y)\,dx^2+dy^2,
\quad
P(y)=ay^2+by+c,
\quad
a>0,
\quad
\Delta:=4ac-b^2>0
\)
&
\(
\displaystyle
\frac{\sqrt{\Delta}}{2P(y)}
\)
&
\(
\displaystyle
-\frac{\Delta}{4P(y)^2}
\)
&
\(
\displaystyle
0
\)
\\
\midrule

% 2
\(
\displaystyle
g=e^{ay}\,dx^2+dy^2,
\quad a>0
\)
&
\(
\displaystyle
\frac{a}{2}
\)
&
\(
\displaystyle
-\frac{a^2}{4}
\)
&
\(
\displaystyle
\frac{a}{2}
\)
\\
\midrule

% 3
\(
\displaystyle
g=y^q\,dx^2+dy^2,
\quad q>2
\)
&
\(
\displaystyle
\frac{\sqrt{q(q-2)}}{2y}
\)
&
\(
\displaystyle
-\frac{q(q-2)}{4y^2}
\)
&
\(
\displaystyle
\frac{q-1}{2y}
\)
\\
\midrule

% 4
\(
\displaystyle
g=\cosh(ay)\,dx^2+dy^2,
\quad a>0
\)
&
\(
\displaystyle
\frac{
a\sqrt{1+\cosh^2(ay)}
}{
2\cosh(ay)
}
\)
&
\(
\displaystyle
-\frac{
a^2\bigl(1+\cosh^2(ay)\bigr)
}{
4\cosh^2(ay)
}
\)
&
\(
\displaystyle
\frac{
a\sinh(ay)\cosh(ay)
}{
2\bigl(1+\cosh^2(ay)\bigr)
}
\)
\\
\midrule

% 5
\(
\displaystyle
g=(a-y)^{-b}\,dx^2+dy^2,
\quad a,b>0
\)
&
\(
\displaystyle
\frac{
\sqrt{b(b+2)}
}{
2(a-y)
}
\)
&
\(
\displaystyle
-\frac{
b(b+2)
}{
4(a-y)^2
}
\)
&
\(
\displaystyle
\frac{
b+1
}{
2(a-y)
}
\)
\\
\midrule

% 6
\(
\displaystyle
g=(\sinh y+c)\,dx^2+dy^2,
\quad c>1; \mathcal D_c(y)
:=
\sinh^2y+2c\sinh y-1
\)
&
\(
\displaystyle
\frac{
\sqrt{\mathcal D_c(y)}
}{
2(\sinh y+c)
}
\)
&
\(
\displaystyle
-\frac{
\mathcal D_c(y)
}{
4(\sinh y+c)^2
}
\)
&
\(
\displaystyle
\frac{
(\sinh y+c)\cosh y
}{
2\mathcal D_c(y)
}
\)
\\
\midrule

% 7
\(
\displaystyle
g=\cosh^a y\,dx^2+dy^2,
\quad a\geq\frac23
\)
&
\(
\displaystyle
\frac{
\sqrt{a\bigl(2+a\sinh^2y\bigr)}
}{
2\cosh y
}
\)
&
\(
\displaystyle
-\frac{
a\bigl(2+a\sinh^2y\bigr)
}{
4\cosh^2y
}
\)
&
\(
\displaystyle
\frac{
\sinh y\,
\bigl(a^2\sinh^2y+3a-2\bigr)
}{
2\cosh y\bigl(2+a\sinh^2y\bigr)
}
\)
\\
\midrule

% 8
\(
\displaystyle
g=y^2(1+y^2)^{2\beta}\,dx^2
 +(1+y^2)^{2\beta}\,dy^2,
\quad \beta>0
\)
&
\(
\displaystyle
2\sqrt{\beta}\,
(1+y^2)^{-\beta-1}
\)
&
\(
\displaystyle
-4\beta
(1+y^2)^{-2\beta-2}
\)
&
\(
\displaystyle
\frac{
1+\beta y^2
}{
y(1+y^2)
}
\)
\\
\midrule

% 9
\(
\displaystyle
g=e^{ay}\,dx^2+e^{-ay}\,dy^2,
\quad a>0
\)
&
\(
\displaystyle
\frac{a}{\sqrt{2}}e^{ay/2}
\)
&
\(
\displaystyle
-\frac{a^2}{2}e^{ay}
\)
&
\(
\displaystyle
\frac{3a}{4}
\)
\\
\midrule

% 10
\(
\displaystyle
g=A\cosh(\omega y)\,dx^2
 +\bigl(A\cosh(\omega y)\bigr)^{-1}\,dy^2,
\quad A,\omega>0
\)
&
\(
\displaystyle
\omega
\sqrt{
\frac{A\cosh(\omega y)}{2}
}
\)
&
\(
\displaystyle
-\frac{A\omega^2}{2}
\cosh(\omega y)
\)
&
\(
\displaystyle
\frac{3\omega}{4}
\tanh(\omega y)
\)
\\
\midrule

% 11
\(
\displaystyle
g=(1+y^2)\,dx^2
 +\frac{dy^2}{1+y^2}
\)
&
\(
\displaystyle
1
\)
&
\(
\displaystyle
-1
\)
&
\(
\displaystyle
\frac{y}{1+y^2}
\)
\\
\midrule

% 12
\(
\displaystyle
g=e^{ay^q}(dx^2+dy^2),
\quad
a>0,
\quad
1<q\leq2
\)
&
\(
\displaystyle
\sqrt{
\frac{aq(q-1)}{2}
}\,
y^{(q-2)/2}
e^{-ay^q/2}
\)
&
\(
\displaystyle
-\frac{aq(q-1)}{2}\,
y^{q-2}
e^{-ay^q}
\)
&
\(
\displaystyle
\frac{
aqy^q+q-2
}{
4y
}
\)
\\
\midrule

% 13
\(
\displaystyle
g=e^{ay^2+by}(dx^2+dy^2),
\quad a,b>0
\)
&
\(
\displaystyle
\sqrt{a}\,
e^{-(ay^2+by)/2}
\)
&
\(
\displaystyle
-ae^{-(ay^2+by)}
\)
&
\(
\displaystyle
\frac{2ay+b}{4}
\)
\\
\midrule

% 14
\(
\displaystyle
g=e^{a\sinh y}(dx^2+dy^2),
\quad a>0
\)
&
\(
\displaystyle
\sqrt{
\frac{a\sinh y}{2}
}\,
e^{-a\sinh y/2}
\)
&
\(
\displaystyle
-\frac{a}{2}
\sinh y\,
e^{-a\sinh y}
\)
&
\(
\displaystyle
\frac14
\bigl(
a\cosh y+\coth y
\bigr)
\)
\\
\midrule

% 15
\(
\displaystyle
g=\cosh^a y\,(dx^2+dy^2),
\quad a\geq2
\)
&
\(
\displaystyle
\sqrt{\frac{a}{2}}\,
\cosh^{-(a+2)/2}y
\)
&
\(
\displaystyle
-\frac{a}{2}
\cosh^{-a-2}y
\)
&
\(
\displaystyle
\frac{a-2}{4}
\tanh y
\)
\\

\end{longtable}
\endgroup

  \end{appendix}

\bigskip

\noindent
{\bf Acknowledgement}. The author thanks Professors~Raz Kupferman and Gui-Qiang Chen for very insightful discussions. The research of SL is supported by NSFC Projects 12331008 $\&$ 12411530065, the Young Elite Scientists Sponsorship Program by CAST 2023QNRC001, National Key Research $\&$ Development Programs 2023YFA1010900 and 2024YFA1014900, Shanghai Rising-Star Program 24QA2703600, Shanghai Qi-Guang Scholarship, and Shanghai Frontiers Science Center of Modern Analysis.

\medskip
\noindent
{\bf Statement of competing interests}. We declare that there are no conflicts of interest involved.

\medskip
\noindent
{\bf Data Availability Statement}. We declare that no data are associated with this work.

\medskip
\noindent
{\bf AI Statement}.  The author thanks the DeepSeek and ChatGPT 5.5/5.6 AI models for computational assistance during the exploration of geometric conditions and the construction of counterexamples for the metric families considered in this work. All mathematical derivations, conclusions, and errors remain solely the responsibility of the author.

\end{document}